\documentclass[12pt]{article}
\usepackage{amssymb,amsmath, enumerate, xcolor, amsfonts, hyperref, amsthm}
\usepackage[margin=1.2in]{geometry}
\usepackage{enumitem, enumerate}

\newtheorem{theorem}{Theorem}
\newtheorem{lemma}[theorem]{Lemma}
\newtheorem{corollary}[theorem]{Corollary}
\newtheorem{definition}[theorem]{Definition}

\newtheorem{conjecture}[theorem]{Conjecture}
\newtheorem{fact}[theorem]{Fact}
\newtheorem{claim}[theorem]{Claim}
\newtheorem{remark}[theorem]{Remark}

 \newcommand{\eps}{\varepsilon}       \newcommand{\W}{\mathcal W}                \newcommand{\V}{\mathcal V}                  \newcommand{\Q}{\mathcal Q}
\newcommand{\X}{\mathcal X} 
      \renewcommand{\epsilon}{\varepsilon}  
      \newcommand{\TO}{\T_{C'}^{Out}}

\newcommand{\ESC}{Erd\H os-S\'os conjecture~}

 \newcommand{\T}{\mathcal T}
 \newcommand{\uT}{\bigcup\mathcal T}
 \newcommand{\Td}{\mathcal T_{D'}}
 \newcommand{\uTd}{\bigcup\mathcal T_{D'}} 
 \newcommand{\Tc}{\mathcal T_{C'}}
 \newcommand{\uTc}{\bigcup\mathcal T_{C'}}

\newcommand{\es}{\emptyset}

\title{The Erd\H os-S\'os conjecture  in dense graphs} 
\author{
Bruce Reed\footnote{Mathematical Institute, Academia Sinica, Taiwan.  (\texttt{bruce.al.reed@gmail.com}).
Supported by  NSTC Grant 112-2115-M-001-013-MY3} 
\quad
Maya Stein\footnote{Departamento de Ingenier\'ia Matem\'atica y Centro de Modelamiento
Matem\'atico (CNRS IRL2807), Universidad de Chile, Santiago, Chile. Supported by ANID Regular Grant 1260024 and by ANID grant CMM Basal FB210005.\\ Parts of this work were conceived while both authors were in residence at the Simons Laufer Mathematical Sciences Institute in Berkeley, California during the Spring semester 2025, supported by the National Science Foundation under Grant No.~DMS-1928930.}
}
\date{}

\begin{document}
\maketitle
\begin{abstract}
 The Erd\H os--S\'os conjecture states that   every $n$-vertex graph with more than $(k-2)n/2$ edges contains every $k$-vertex tree. We prove that for every $\gamma$ there is an $n_0$ 
 such that for all $n\ge n_0$ and $k \ge \gamma n$ the conjecture holds.
 As a corollary of our result, we obtain a solution  of  a  51-year-old problem of
 Erd\H os and Graham on the multicolor Ramsey numbers of trees.
\end{abstract}

\section{Introduction}
The following famous conjecture has been open for over sixty years~\cite{Erdos64}. 

\begin{conjecture}[Erd\H os--S\'os conjecture]\label{es}
For $k, n\in\mathbb N^+$,  
every $n$-vertex graph with more than $(k-2)n/2$ edges contains every $k$-vertex tree as a subgraph.
\end{conjecture}

The conjecture is best possible because of the  clique on $k-1$ vertices which has no $k$-vertex subgraph. More generally, any $(k-2)$-regular graph serves as an example for Conjecture~\ref{es} being tight, as it fails to contain the $k$-vertex star.  

Many partial results have appeared since Conjecture~\ref{es} was posed (and a full proof was announced, but without a manuscript), 
 we refer to~\cite{maya-survey}~for  references. 
In particular, the conjecture was solved for  large trees of linearly bounded maximum degree  and large dense  host graphs by Besomi, Pavez-Sign\'e and one of the authors~\cite{BPS3}, and building on this, for all large trees of constant maximum degree by Pokrovskiy~\cite{Pok24}. Dropping the restrictions on the maximum degree of the tree, the conjecture was solved for all large trees and host graphs $G$ with $|V(G)|\le (1+10^{-1})k$ by the authors~\cite{RS25}. Very recently, Davoodi,  Piguet, {\v R}ada and Sanhueza-Matamala
gave a proof of the approximate version of the conjecture  for large dense graphs~\cite{beyond}. 

 We prove  the \ESC without approximation for large dense graphs:
\begin{theorem}
     \label{maint'}
     For each $\gamma>0$ there is an $n_0$ 
 such that for all $n\ge n_0$ and $k \ge \gamma n$,  every $n$-vertex graph $G$  with  average degree exceeding $k-2$ contains every tree $T$ on $k$ vertices as a subgraph.
 \end{theorem}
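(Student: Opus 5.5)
We will prove the statement by the stability method, combining the approximate result of Davoodi, Piguet, \v Rada and Sanhueza-Matamala~\cite{beyond} with an exact embedding argument near the extremal configurations. First we reduce to a convenient host graph. Repeatedly deleting vertices of degree at most $(k-2)/2$ preserves the condition that the average degree exceeds $k-2$. So we may pass to a subgraph $G'$ with minimum degree at least $k/2$ and average degree greater than $k-2$. Since $k\ge\gamma n$, the graph $G'$ still has at least $k$ vertices, and it is dense relative to its order, with density bounded below in terms of $\gamma$. We then apply Szemer\'edi's regularity lemma to $G'$ with parameters depending on $\gamma$. This gives a reduced graph $R$ whose average degree is essentially $(k-2)/|V(G')|$ times $|R|$, and we use it to partition the tree.

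Next we cut $T$ into many small subtrees, each of size at most $\beta k$, glued along a bounded set of \emph{seeds}, in the standard way. If $T$ has maximum degree $o(k)$, we follow the strategy of~\cite{BPS3}: a matching structure in $R$, meaning a large connected matching or a dense spot inside a bipartite-like pair of clusters, lets us embed the small subtrees greedily into regular pairs. When $T$ has vertices of degree $\Omega(k)$, there are only boundedly many of them. For these we find vertices of $G'$ with degree close to $k$ or larger, which must exist because the average degree exceeds $k-2$, and place the high-degree tree vertices there, with their children going into the large neighbourhoods. This is the part where we would import the ideas of~\cite{RS25}, used there in the regime $|V(G)|\le 1.1k$.

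The approximate theorem yields the embedding whenever $G$ is not $\eps$-close to an extremal structure. Here \emph{extremal} means a disjoint union of near-cliques $K_{k-1}$, or a graph close to a complete bipartite graph $K_{k/2-1,\,n-k/2+1}$ together with a few internal edges, or close to a near-$(k-2)$-regular graph. So the core of the proof is the \textbf{stability case}, and this is the step I expect to be the main obstacle. In each extremal configuration the exact count matters. In the clique-union case, some component has average degree above $k-2$. That component either contains a vertex outside the near-clique pattern or has more than $k-1$ vertices, and we use this surplus to embed the last few leaves of $T$ after embedding most of $T$ into the near-clique, which is very dense. In the bipartite case we 2-colour $T$ and place the smaller colour class on the small side. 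The excess edges beyond $(k-2)n/2$ then force either an edge inside the small side or a small-side vertex of degree at least $k-1$ into the large side. We use that edge or vertex to fix the parity imbalance when the smaller colour class of $T$ has size exactly $\lfloor k/2\rfloor$. In all cases, leaves and bare paths of $T$ are reserved and embedded last, using the minimum degree of $G'$ and a matching argument (Hall's theorem) to absorb the exact deficit. I would first try to handle all three extremal families by this ``embed most, then repair with leaves'' template.
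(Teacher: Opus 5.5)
\textbf{There is a genuine gap: the bridge from the approximate theorem to the exact one is missing.} Your whole plan rests on the claim that the approximate theorem of~\cite{beyond} ``yields the embedding whenever $G$ is not $\varepsilon$-close to an extremal structure''. An approximate result does not provide this. That theorem needs average degree at least $(1+\eta)k$. For a graph with average degree in $(k-2,(1+\eta)k]$ it says nothing, whether or not the graph resembles one of your families. Turning it into a stability statement would mean redoing the embedding with no surplus in the average degree. You also give no argument that your list of extremal families is complete, and the list does not fit this regime:
\begin{itemize}
\item $K_{k/2-1,\,n-k/2+1}$, even with the small side made complete, has average degree $(k-2)(1-\frac{k}{4n})\le(k-2)(1-\frac{\gamma}{4})$. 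So it is nowhere near the threshold when $k\ge\gamma n$.
\item ``Close to a near-$(k-2)$-regular graph'' is not a special case but essentially the whole problem. There, ``embed most of $T$ and repair with reserved leaves or bare paths'' has no slack. The guaranteed number of free neighbours of a parent's image is only about $\deg-(k-\ell)$, where $\ell$ is the number of reserved vertices. With $\deg\approx k-2$ this count is tight up to an additive constant. For several parents these neighbourhoods can overlap, so the final Hall-type step can fail unless you have controlled where the bulk of $T$ sits relative to them.
\end{itemize}
The paper does not attempt such a repair. This case is exactly Theorem~\ref{thm:realmain} (robust graphs with a subgraph of minimum degree $(1-\mu)k$), proved in a separate companion paper.

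\textbf{The complementary case is not approximate either.} Your sketch of it (place the high-degree vertices of $T$ on host vertices ``of degree close to $k$ or larger'') misses what is needed. The paper argues as follows.
\begin{itemize}
\item It takes a minimal, hence robust, counterexample.
\item Through Lemma~\ref{mayaslemma1}, it uses Theorem~\ref{thm:realmain} to get $\Omega(n)$ vertices of degree at least $(1+\Omega(1))k$. Degree merely close to $k$ is not enough.
\item From these, Lemma~\ref{2+alpha} produces two \emph{adjacent} clusters $A,B$ with $d(A)+d(B)\ge(2+2\gamma^{10})\kappa$ to host $S_C$ and $S_D$.
\item The density assumption $k\ge\gamma n$ is used via Lemma~\ref{mindegsubgr} to make every neighbour of $A$ or $B$ have degree at least $(1+\gamma^{33})\kappa/2$.
\end{itemize}
Even with this surplus, the small subtrees cannot just be packed greedily, because the neighbourhood of $A$ may be a dead end. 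This is why the paper builds Gallai--Edmonds based $f$-matchings $M,M_{AB}$ with stable obstructions $Z,Z_{AB}$ (Lemma~\ref{fracmatchlem}). It then runs a balance-controlled exact embedding, split according to whether $Z$ is empty and according to the colour-class imbalances of the subtrees. This includes the inverted embedding through $N(Z)$ and routing through the powerful cluster $B^*$. Your proposal contains none of these ingredients or substitutes for them. As written, it identifies the goal but gives no argument for the threshold range of average degree.
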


 We will use the main result from the companion paper~\cite{RSExt}, where    a graph is called  {\it robust}
  if
 all its proper subgraphs have strictly lower average degree.

\begin{theorem}\label{thm:realmain}
 There is a constant $\mu\in (0,1)$ 
 such that  for all $k\in\mathbb N$  each $k$-vertex tree 
  is contained in 
     each robust graph  of average degree exceeding $k-2$  which  has a subgraph $H$ of  minimum degree $\delta(H)\ge (1-\mu)k$.
 \end{theorem}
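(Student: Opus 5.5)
The plan is to embed $T$ in two stages: first embed most of $T$ greedily inside the dense subgraph $H$, and then use the exact degree condition, through robustness, to place the last few vertices. Throughout, $G$ denotes the robust graph, and $\mu$ will be chosen small only at the end.

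\textbf{Stage 0: what robustness gives.} The first step is to extract local information from robustness. Write $d>k-2$ for the average degree of $G$. Deleting a vertex of degree at most $d/2$ does not lower the average degree, so every vertex of $G$ has degree exceeding $(k-2)/2$. More generally, for any set $S$, $e(S)+e(S,V\setminus S)>d|S|/2$. I would use this Hall-type inequality as the substitute for a minimum degree condition. It is what will let us attach leaves and finish short paths outside $H$.

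\textbf{Stage 1: greedy embedding into $H$.} Fix a small $\epsilon$ with $\mu\ll\epsilon$. Any tree on at most $(1-\mu)k$ vertices embeds greedily into $H$, since $\delta(H)\ge(1-\mu)k$. So only roughly the last $\mu k$ vertices of $T$ need care. I would split into two cases by the number of leaves of $T$.

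\emph{Case 1: $T$ has at least $\epsilon k$ leaves.} Remove a set $L$ of $\mu'k$ leaves, with $\mu\ll\mu'\ll\epsilon$, chosen so that their parents $P$ are spread suitably. Embed $T-L$ into $H$ greedily, but randomise the choices so that the image avoids no large part of each neighbourhood; a random greedy embedding or a reserved random set should achieve this. Then attach the leaves by a Hall-type matching from $P$ into unused vertices. The needed condition is that every set of parents has enough free neighbours in $G$. Here one uses that the used set has size $k-1-|L|$ while the degrees are about $k-2$. The deficit must be paid for by averaging over $P$ via the robustness inequality of Stage 0, and not by individual degrees.

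\emph{Case 2: $T$ has fewer than $\epsilon k$ leaves.} Then $T$ contains many long bare paths. I would embed all of $T$ except one bare path of length about $\mu' k$ into $H$. Because $H$ has minimum degree close to $k$, it is highly connected and well-linked, so a path of prescribed length between two given vertices can be completed by an absorbing or rotation argument inside $H\cup N(H)$.

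\textbf{Main obstacle: the near-extremal configuration.} The hardest step is the situation close to the extremal example, for instance $H$ close to $K_{k-1}$, or $G$ close to a union of such cliques. There the counting above has essentially zero slack, and the bound $k-2$ is needed exactly. I would isolate this as a separate case, $|H|\le(1+\epsilon)k$ with $H$ near-complete. In that case robustness forces $H$ to have either a vertex of degree at least $k-1$ or enough edges leaving $H$, since otherwise $H$ would have average degree at most $k-2$ while containing almost all the density. I would exploit that single extra vertex or those outgoing edges to place one leaf, or one path vertex, outside the clique-like structure. The remainder of $T$ would then be embedded inside $H$ using its near-completeness, which allows an essentially arbitrary assignment.
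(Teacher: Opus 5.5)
This paper does not prove the statement; it quotes it as the main result of the companion paper \cite{RSExt}. Your sketch therefore has to stand on its own, and at the decisive point it does not. The content of the theorem is to pay \emph{exactly} for a deficit of order $\mu k$. For instance, $H$ may be essentially a clique on $(1-\mu)k+1$ vertices, so $\Theta(\mu k)$ vertices of $T$ must be placed outside $V(H)$, and you may use only $d(G)>k-2$ and robustness.

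Your Stage~0 inequality $e(S)+e(S,V\setminus S)>d|S|/2$ is an edge count, not a Hall-type condition. It gives no lower bound on $|N(S)|$: it holds for an independent set $S$ whose vertices share one neighbourhood of size just over $d/2$. For $S=V(H)$ in the clique example, it only says that a vertex of $H$ has on average about $\mu k/2$ neighbours outside $H$, and nothing stops these from lying in a small common set. So the leaf matching in Case~1 is unsupported. That matching needs $|N(\varphi(P'))\setminus U|\ge|L(P')|$ for every set $P'$ of parents, with essentially zero slack.

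Moreover, the removed leaves cannot always be ``spread''. For a star or a broom they share one parent, whose image must have $G$-degree close to $k-1$. Such a vertex exists in $G$, but nothing puts it in $H$, so ``embed $T-L$ greedily into $H$, then match'' fails. Where the high-degree vertices of $T$ go must be decided from the global structure of $G$. Your near-extremal paragraph, which speaks of exploiting one extra vertex to place one leaf, underestimates the shortfall: it is $\Theta(\mu k)$ vertices, not one.

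Case~2 rests on a false structural claim. A tree with fewer than $\epsilon k$ leaves has fewer than $2\epsilon k$ maximal bare paths. It is therefore only guaranteed bare paths of length $O(1/\epsilon)$, a constant. For example, a spider with $\epsilon k/2$ legs of length $2/\epsilon$ has no bare path of length $\mu' k$. To set aside $\mu'k$ vertices you would have to reconnect $\Theta(\epsilon\mu' k)$ pairs of already embedded vertices by paths of prescribed constant lengths. These paths would run largely outside $H$, since $H$ may have only $(1-\mu)k+1$ vertices. Neither $\delta(H)\ge(1-\mu)k$ nor robustness gives such a linking structure in $G-V(H)$. The degree bound alone does not even make $H$ highly connected: two cliques sharing a vertex satisfy it.

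As written, the proposal reduces the theorem to exactly its hard parts without giving a mechanism for them.
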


An overview of our proof of Theorem~\ref{maint'} is given in the next Section.\footnote{
We remark that it may be possible to deduce Theorem~\ref{maint'} by combining Theorem~\ref{thm:realmain} and results from Section~\ref{sec:hoststruc1} with the main result of~\cite{beyond}, but our proof of Theorem~\ref{maint'}, which was found independently of~\cite{beyond}, uses different ideas and is much shorter.
}
 

We close the introduction with an application of Theorem \ref{maint'} to Ramsey theory. 
The $\ell$-Ramsey number $R_\ell(G)$ of a graph $G$ is the minimum $n$ for which  every $\ell$-colouring of the edges of $K_n$ yields a monochromatic copy of $G$.
Burr and Roberts~\cite{BurrRoberts} determined $\ell$-Ramsey numbers of stars. 
 In 1975, Erd\H os and Graham~\cite{eg75} showed that 
 $R_{\ell}(T) > \ell(k-2)+1$ 
 for every $T$ on $k$ vertices and  sufficiently large $\ell\equiv  1 \bmod k$. They asked \footnote{We assume they mean that the $O(1)$-term is a constant that may depend on $\ell$.} whether $R_\ell(T)<\ell(k-1)+O(1)$ for every $\ell$ and for every $k$-vertex tree $T$, and noted that this would be implied by Conjecture~\ref{es}.  To see how Erd\H os and Graham's problem connects with Conjecture~\ref{es}, observe that for $\ell\ge 2$, every $\ell$-colouring of the edges of the complete graph on 
 $n=\ell (k-2)+2$ vertices  has a colour which is present on more than $\frac{k-2}2n$ edges. 
For fixed $\ell$, we can apply
Theorem~\ref{maint'} with $\gamma=1/\ell$ to find a monochromatic copy of any $k$-vertex tree, if $k$ is sufficiently large. So Theorem~\ref{maint'}  has the following corollary, which answers Erd\H os and Graham's question in the affirmative.
 \begin{corollary} For each $\ell\ge 2$ there is a $k_0$ such that $R_\ell(T)< \ell(k-2)+3$ for all $k\ge k_0$ and each $k$-vertex tree $T$.
 \end{corollary}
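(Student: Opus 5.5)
The corollary follows from Theorem~\ref{maint'} by a counting argument. Fix $\ell\ge 2$ and put $\gamma = 1/(2\ell)$. Let $n_0$ be the value given by Theorem~\ref{maint'} for this $\gamma$. Choose $k_0$ large enough that $\ell(k_0-2)+2\ge n_0$; since $k\ge 3$, this makes $k_0 \ge n_0/\ell$ up to lower-order terms. Now take $k\ge k_0$, a $k$-vertex tree $T$, and an arbitrary $\ell$-colouring of the edges of $K_n$ with $n=\ell(k-2)+2$. Since $R_\ell(T)\le n$ means exactly $R_\ell(T)<\ell(k-2)+3$, it suffices to find a monochromatic copy of $T$ in this colouring.

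The first step is pigeonhole. $K_n$ has $\binom n2 = \frac{n}{2}(n-1)$ edges, and
\[
n-1=\ell(k-2)+1>\ell(k-2).
\]
Hence some colour class $G$, viewed as a spanning subgraph of $K_n$, has at least $\frac{n(n-1)}{2\ell}>\frac{(k-2)n}{2}$ edges. Equivalently, its average degree exceeds $k-2$.

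The second step is to check the hypotheses of Theorem~\ref{maint'} for $G$. We have $n\ge n_0$ by the choice of $k_0$. We also need $k\ge\gamma n$. Since
\[
\gamma n = \frac{\ell(k-2)+2}{2\ell} = \frac{k-2}{2}+\frac1\ell \le k,
\]
this holds. Theorem~\ref{maint'} therefore gives a copy of $T$ in $G$, and this copy is monochromatic.

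The only delicate points are the two inequalities. One is the strictness in the pigeonhole step, which is why $n$ is chosen as $\ell(k-2)+2$. The other is making $n\ge n_0$ hold uniformly through the choice of $k_0$. Both are routine, so there is no genuine obstacle.
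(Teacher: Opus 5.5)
Your proposal is correct and is essentially the paper's argument: pigeonhole on an $\ell$-colouring of $K_n$ with $n=\ell(k-2)+2$ yields a colour class of average degree exceeding $k-2$, and Theorem~\ref{maint'} applied to that class gives a monochromatic copy of $T$. Your choice $\gamma=1/(2\ell)$ in place of the paper's $\gamma=1/\ell$ makes no difference, since either choice gives $k\ge\gamma n$.
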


\subsection{Recent AI solution of the Erd\H{o}s--S\'{o}s conjecture}
It was announced very recently that GPT-6 Astra proved the Erd\H{o}s--S\'{o}s conjecture in full. Our proof was found without any use of AI. The version uploaded as the first arXiv version of this paper was ready in this form in early August 2026 (and a very similar version was ready in June 2026) but was not uploaded until the AI proof was announced. 
Although our result is now eclipsed by the AI proof, we believe that   the methods of this and the companion paper will likely have an impact on future work on related conjectures on tree containment.

\section{Overview of the proof}

We prove the dense case of the Erd\H{o}s-S\'{o}s conjecture by contradiction, assuming a minimal and thus robust counterexample $G$. The initial setup relies~on standard machinery for this type of problems: We decompose the tree $T$ into a constant sized vertex set $S$ and small rooted components of $T-S$ and apply Szemer\'{e}di's Regularity Lemma, with the aim of embedding the small trees into edges of the reduced graph (see Sections~\ref{sec:partitionTree} and~\ref{sec:regu}). We then identify  
several specific substructures in  the reduced host graph,  giving us an advantageous starting point for the embedding of $T$. For finding these structures, robustness and density of the host graph are crucial. Finally,  we employ a range of fine-tuned dynamic embedding strategies that are adapted to the particular configurations of both the host graph and the tree. 

\vspace{0.3cm}
\noindent\textbf{Structure in the host graph (special clusters $A$, $B$, $f$-matchings and obstructions).} 
The high average density of the host graph trivially ensures the existence
of a  cluster of degree at least $\kappa$  in the reduced graph (where $\kappa$ corresponds to $k$ scaled to the size of the reduced graph). With the help of Theorem~\ref{thm:realmain}, we can even find  clusters of degree $(1+\Omega(1))\kappa$ (see Lemma~\ref{mayaslemma1}). However,  such   high-degree clusters may be useless for tree embeddings if their neighbourhoods  are  dead ends. Moreover, we need two clusters to host the intersections $S_C, S_D$ of $S$ with the two colour classes of $T$.  Lemma~\ref{2+alpha} provides two adjacent clusters
$A,B$ satisfying
$d(A)+d(B)\ge(2+\Omega(1))\kappa$,
say with $d(A)\ge d(B)$. Moreover,  since $k$ is linear in $n$, we can ensure that  each cluster of $N(A)\cup N(B)$ has degree  $(1+\Omega(1))\frac \kappa 2$. We will always use   $A$, but not always use $B$; this will depend on the shape of the tree and on further structure of the host, which we explore next.

We  
isolate two $f$-matchings covering much of $N(A)$, or much of $N(A)\cup N(B)$, respectively (for an introduction to $f$-matchings 
 see Section~\ref{sec:match}). These $f$-matchings are found in Lemma~\ref{fracmatchlem}, and they are fitted to either using $A$, or both~$A$ and~$B$ in the later embedding. In the first case, we would ideally like to find an $f$-matching that uses about $d(W,A)f$ copies of $W$ for every cluster $W$. However, this may be impossible, and so we construct an $f$-matching~$M$ together with its stable set obstruction $Z$ (that is, $N(Z)$ is a bottleneck  to $M$ being as described above). Crucially,~$M$ matches all of $N(Z)$    to $Z$ in a way that it absorbs as much as possible of the degree of $A$ into~$Z$. Similarly,
we construct an $f$-matching $M_{AB}$ and its obstruction~$Z_{AB}$, with similar properties, but for the degree of $\{A,B\}$ into~$Z_{AB}$. 

Finally, in preparation for the  last and most delicate case of the embedding procedure,  we exploit robustness to extract a very special structure inside $Z$ and $N(Z)$, featuring a  cluster $B^*$ and its powerful neighbours.

\vspace{0.3cm}
\noindent\textbf{Our embedding strategies.} 
Most of the proof work is done during the embedding phases, where the forest of subtrees is packed into the regular pairs  corresponding to the edges of one of our  $f$-matchings. Our strategy diverges based on the nature of the structural obstructions.

\textbf{Case 1 ($Z = \emptyset$):} When the primary obstruction $Z$ is negligible or nonexistent, the matching $M$ almost perfectly covers the neighborhood of $A$. If also the secondary obstruction $Z_{AB}$ is non-existent, the embedding becomes an easy job if furthermore,~$A$ has very large degree (Case 1.1). Otherwise, $B$'s degree is large (Case 1.2). Then we have more freedom when allocating $S$ and our main worry is how to   balance the amount we embed into the two endpoints 
of an edge when packing small unbalanced trees  into it. For this we  need to embed with an eye 
on the ratio of the smaller versus the larger colour class (in each small tree), in a way that the imbalances cancel out. We achieve this by embedding into $B$ the one of $S_C, S_D$ whose adjacent trees have less imbalance overall, and embedding the other into $A$. 
We 
first embed  parts of the tree from $B$, then simultaneously from both $A$ and $B$, and   finish off as needed.

Next, if $Z_{AB}\neq\es$,  our job is easy if $A$ sees none of it (Case 1.3) as then we can embed from $B$ without disturbing the embedding from $A$ overly, but becomes more difficult if it does (Case 1.4). 
We crucially rely on $M_{AB}$ having been chosen earlier to intersect $M$ maximally, as this guarantees that $A$ has degree above~$\kappa$ into $V(M_{AB})$. 
Also, instead of embedding $S$ in $A\cup B$ we use $A$ and one of its neighbours $B'\in Z_{AB}$. The fact all neighbours of $B'$ are in $N(Z_{AB})$ allows us to fit $T$ into $M_{AB}$, much of it in edges between $N(B')$ and $Z_{AB}$. Again, we have to take care with skewed trees, with the difference that this time it turns out to be necessary to measure the imbalance by the size difference of the two colour classes instead of the ratio.   

\textbf{Case 2 ($Z \neq \emptyset$):} When $Z$ is large, its neighbourhood $N(Z)$ is a bottleneck (unless the size of $N(Z)$ is decent enough compared to the tree size, which provides an easy embedding situation and is treated at the beginning of the case). The remainder of Case 2 splits into two cases, depending on the shape of $T$.
In Case 2.1, we  overcome the bottleneck  by embedding part of the small trees in an `inverted way', i.e.~we embed roots that would normally go to $Z$ in $N(Z)$ instead. We  manage to fill~$Z$ and~$N(Z)$ sufficiently to eventually break out from the obstruction using $A$'s large degree.

In Case 2.2, we treat trees $T$ for which the above approach fails. Here we need the  special cluster $B^* \in Z \cap N(A)$  mentioned above.  A precise amount of the more unbalanced trees avoids $Z$ and is  instead routed through $B^*$'s powerful neighbourhood~$\Q$ into the rest of the graph, while still avoiding~$\Q$. When everything but $\Q$ is filled, 
the final embedding stage is delicate.
We  carefully distribute the little available space inside $\Q$ between the two partition classes of the remainder of $T$. Timing is important, too: we embed the root of each small tree when organising the space, but  coordinate the order of the remaining embedding meticulously.

\section{Preliminaries}\label{sec:prelim}
In this section, we will state some known and widely used tools for tree embeddings, and sharpen these tools by proving some auxiliary results. We group the preliminaries according to their theme into  subsections.

\subsection{Partitioning the tree}\label{sec:partitionTree}

The following definition is central to our proof.
\begin{definition}[$\alpha $-decomposition of a tree]\label{betadecomp}
Let $T$ be a tree  on $k\ge 1$ vertices with root~$r$. 
    For $\alpha>0$,  an $\alpha$-decomposition of  $T$  is a pair $(\mathcal T, S)$ with $S\subseteq V(T)$ and $\mathcal T$ being the set of components of $T-S$, rooted at their vertex closest to $r$, such that the following holds for   some labelling $(C,D)$ of the colour classes of $T$: 
\begin{enumerate}
\item[(a)] $r\in S$ and $|S| \le \frac{20}{\alpha^2}$,
\item[(b)] for each $X\in\mathcal T$, $|X|\le \alpha  k$  and $X$ is adjacent to at most two elements of $S$.
\end{enumerate}
Letting $\mathcal T_C$  consist of all $X\in\mathcal T$ whose root has its parent in $S_C:=S\cap C$, defining  $\mathcal T_D$ and $S_D$ analogously, and letting $\mathcal T_2$ consist of those $X\in\mathcal T$ that are adjacent to two vertices of $S$, 
we further require the following:
\begin{enumerate}
\item[(c)] $\mathcal T_2\subseteq \mathcal T_C$ and for  each $X\in\mathcal T_2$,
its two neighbours  in $S$ 
have even distance at least~six,
\item[(d)] $|\bigcup\mathcal T_C\setminus \bigcup \mathcal T_2|\ge |\bigcup\mathcal T_D|$.
\end{enumerate}
\end{definition}

\begin{lemma}{\bf\upshape\cite{LKS4}}
\label{treepartlem}
 For any $\alpha>0$  each tree $T$ with $k\ge 1$ vertices has an $\alpha$-decomposition.  
 \end{lemma}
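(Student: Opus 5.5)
This result is due to~\cite{LKS4}; we would prove it by a greedy cutting procedure followed by local repairs, keeping generous slack in $|S|$ since (a) allows $20/\alpha^2$ vertices. Throughout, $T$ is rooted at $r$ and $T(v)$ denotes the subtree of $v$.

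\textbf{Step 1 (cutting).} Start with $S=\{r\}$. Repeatedly choose a deepest vertex $v\notin S$ such that the component of $v$ in $T(v)-S$ has more than $\alpha k/4$ vertices, and add $v$ to $S$. Since $v$ is deepest, each child component of $v$ has at most $\alpha k/4$ vertices. Each addition separates off more than $\alpha k/4$ vertices that are never charged again, so this adds at most $4/\alpha$ vertices to $S$. Afterwards every component of $T-S$ has size at most $\alpha k/4$.

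\textbf{Step 2 (closure under branching).} Close $S$ under taking least common ancestors of pairs. This at most doubles $|S|$. Now a component $X$ of $T-S$ has its parent in $S$ and at most one further $S$-vertex below it: two $S$-vertices below $X$ would have their least common ancestor inside $X$. This gives (b).

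\textbf{Step 3 (distance and parity).} Fix a component $X\in\mathcal T_2$ with $S$-neighbours $s$ (above) and $s'$ (below).
\begin{itemize}
\item If $\mathrm{dist}(s,s')<6$, put the at most five interior vertices of the $s$--$s'$ path into $S$. This destroys $X$ as a two-attached component, and the pieces created are one-attached.
\item If $\mathrm{dist}(s,s')$ is odd, add to $S$ the child $x$ of $s$ on the path. The component now containing the path is attached to $x$ and $s'$, which are at even distance. If that distance is below $6$, apply the previous bullet.
\item To put every pair into a prescribed colour class $C$: a pair with both ends in $D$ is replaced by adding a further neighbour on the path. This shifts the parity twice, so we add two consecutive path vertices and recheck the distance.
\end{itemize}
Each repair adds $O(1)$ vertices per component of $\mathcal T_2$. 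There are at most $|S|$ such components, since each is indexed by its lower $S$-vertex. So $|S|$ stays $O(1/\alpha)$, well within $20/\alpha^2$. Components only shrink, so (b) survives.

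\textbf{Step 4 (choosing the labelling; the main obstacle).} Condition (d) is the delicate one. Let $c$ and $d$ be the total sizes of the one-attached components hanging from the two colour classes of $S$. Label as $C$ the class with the larger such volume, so that $c\ge d$, and perform Step 3 with respect to this labelling. The danger is that the repairs in Step 3, which cut two-attached components, change the one-attached volumes on the two sides.

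I would handle this by observing that each repair moves only the pieces adjacent to the $O(1)$ new $S$-vertices. Cutting a two-attached component can also always be done so that its pieces become one-attached on the $C$ side, by choosing which path vertex enters $S$. Hence the repairs can be arranged to increase $c$ relative to $d$.

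If this bookkeeping fails, the fallback is to dissolve $\mathcal T_2$ altogether: add the whole $s$--$s'$ path of every two-attached component to $S$. This is not affordable when paths are long. I would instead add only every $\lceil\alpha k/4\rceil$-th path vertex, which still costs $O(1/\alpha^2)$ vertices in total and matches the budget in (a). This makes all components one-attached, after which (d) follows simply by the choice of labelling.

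I expect verifying that (c) and (d) can hold simultaneously within the $20/\alpha^2$ budget to be the main work.
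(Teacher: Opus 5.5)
The genuine gap is Step 4, i.e.\ obtaining (c) and (d) simultaneously, and neither of your two arguments for it works. Steps 1--3 are fine in outline: greedy cutting, closure under least common ancestors to get at most two attachments, and $O(1)$ local repairs per two-attached component. There is one slip in the last bullet of Step 3: for a component attached to $s,s'\in D$ you must add the root of the component and the parent of $s'$. Adding two consecutive path vertices leaves a piece attached to two $D$-vertices.

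Your claim that a two-attached component can always be cut so that its pieces land on the $C$ side is false for short components. Suppose $X$ is attached to $s,s'\in C$ at distance $2$ through $y\in D$. As long as $s,s'\in S$, the vertex $y$ is forced into $S$, since otherwise its component is attached to two $S$-vertices at distance $2$. So all of $X-y$, up to about $\alpha k/4$ vertices, becomes one-attached volume on the $D$ side. Your odd-distance repair does the same thing, avoidably: the child of $s$ is a $D$-vertex when $s\in C$.

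The volume inside two-attached components is invisible to your comparison of $c$ and $d$, which is made before the repairs. Even one such component can therefore reverse a near-tie $c\ge d$, and $\Theta(1/\alpha)$ of them can move $\Theta(k)$ vertices.

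The fallback does not exist either. Adding every $\lceil\alpha k/4\rceil$-th vertex of an $s$--$s'$ path just creates new two-attached segments between consecutive added vertices. In general $\mathcal T_2$ cannot be avoided: in a path on $k$ vertices, every component of $T-S$ not containing an end of the path is attached to two vertices of $S$. So $\mathcal T_2=\emptyset$ would force $|S|\ge(1-2\alpha)k$.

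The missing idea is to decouple the repairs from the choice of labelling.
\begin{itemize}
\item Fix a colour class $P$, with $Q$ the other class. Do all repairs with respect to $P$, ignoring volumes and treating distances below $8$ as short. Now every two-attached component is attached to two $P$-vertices at even distance at least $8$, and every component at $S\cap Q$ is one-attached.
\item Let $v_P$ and $v_Q$ be the total sizes of the one-attached components hanging from $S\cap P$ and from $S\cap Q$. If $v_P\ge v_Q$, take $(C,D)=(P,Q)$.
\item Otherwise, for each two-attached $X$ with upper attachment $s$ and lower attachment $s'$, add to $S$ the root of $X$ and the parent of $s'$. Both lie in $Q$. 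The middle piece is attached to them at even distance at least $6$ and hangs from a $Q$-vertex, and every other new piece hangs from one of these two vertices.
\item Since $s$ and $s'$ each have exactly one neighbour in $X$, no new component becomes attached to $S\cap P$.
\end{itemize}
With $(C,D)=(Q,P)$, condition (c) then holds, $|\bigcup\mathcal T_D|=v_P$, and $|\bigcup\mathcal T_C\setminus\bigcup\mathcal T_2|\ge v_Q>v_P$, which is (d).

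This costs at most two extra vertices per two-attached component, so $|S|=O(1/\alpha)$ overall. That is below $20/\alpha^2$ for small $\alpha$; larger $\alpha<1$ needs explicit constants, and for $\alpha\ge 1$ one can take $S=\{r\}$. For comparison, the paper does not prove this lemma itself but quotes it from \cite{LKS4}.
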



\subsection{Regularity}\label{sec:regu}

 We present some standard machinery of the regularity method. 
 Let $G$ be a graph. The density  $dens_G(A,B)$ of
 a pair of disjoint $A,B\subseteq V(G)$ 
 is $\frac{e(A,B)}{|A||B|}$, where $e(A,B)$ is the number of edges from $A$ to $B$. 
 We say $A'\subseteq A$ is \textit{$\varepsilon$-significant} for $A$ (or  $\epsilon$-significant if $A$ is clear) if $|A'|> \varepsilon |A|$.
 For any $\eps>0$  the  pair $(A,B)$ is
 $\epsilon$-regular if for all  $\epsilon$-significant $A' \subseteq A$, $B' \subseteq B$ we have that $|dens_G(A',B')-dens_G(A,B)| < \epsilon$. 
A vertex $x\in A$ is  \textit{$\varepsilon  $-typical} to a set $Y\subseteq B$ if  $ (dens_G(A,B)-\varepsilon)|Y|\le \deg(x,Y) \le (dens_G(A,B)+\varepsilon)|Y|$. We simply write \textit{regular}, \textit{significant} or \textit{typical} if $\varepsilon$ is clear from the context.

In many ways regular pairs behave like  random bipartite graphs with the same edge density. 
The next well known fact (see for instance~\cite{regu}) captures this for the properties of  typicality and subpair regularity inheritance.

\begin{fact}\label{fact:1} 
For all $0< \eps\le \gamma\le 1$ and any  $\varepsilon$-regular pair $(A,B)$ we have:
\begin{enumerate}
 	\item[(i)] For all $\varepsilon$-significant $Y\subseteq B$,  at most  $2\varepsilon|A|$ vertices  of $A$ are not $\varepsilon$-typical~to~$Y$.
 	\item[(ii)]\label{fact:1,2}  For   all $\gamma$-significant  $X\subseteq A$, $Y\subseteq B$, the pair~$(X,Y)$ is $\frac{2\varepsilon}{\gamma}$-regular with density~$d$ obeying  $dens_G(A,B)-\varepsilon\le d \le dens_G(A,B)+\varepsilon$.
\end{enumerate}
\end{fact}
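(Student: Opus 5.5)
We prove the two parts of Fact~\ref{fact:1} directly from the definition of $\varepsilon$-regularity. Write $d:=dens_G(A,B)$.

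For (i), suppose that more than $2\varepsilon|A|$ vertices of $A$ fail to be $\varepsilon$-typical to~$Y$. Let $A^-$ be the set of $x\in A$ with $\deg(x,Y)<(d-\varepsilon)|Y|$, and $A^+$ the set of $x\in A$ with $\deg(x,Y)>(d+\varepsilon)|Y|$. One of these two sets has more than $\varepsilon|A|$ vertices, so it is $\varepsilon$-significant for~$A$; say it is $A^-$.
\begin{itemize}
\item Summing degrees gives $e(A^-,Y)<(d-\varepsilon)|A^-||Y|$, that is, $dens_G(A^-,Y)<d-\varepsilon$.
\item Since $Y$ is $\varepsilon$-significant for $B$ by hypothesis, regularity of $(A,B)$ gives $|dens_G(A^-,Y)-d|<\varepsilon$.
\end{itemize}
These two statements contradict each other. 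The case of $A^+$ is symmetric, with $dens_G(A^+,Y)>d+\varepsilon$. Hence at most $2\varepsilon|A|$ vertices of $A$ are not $\varepsilon$-typical to~$Y$.

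For (ii), take $\gamma$-significant $X\subseteq A$ and $Y\subseteq B$.

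The density bound comes first. Since $\gamma\ge\varepsilon$, we have $|X|>\gamma|A|\ge\varepsilon|A|$ and $|Y|>\gamma|B|\ge\varepsilon|B|$. So $X$ and $Y$ are $\varepsilon$-significant, and regularity of $(A,B)$ gives $|dens_G(X,Y)-d|<\varepsilon$. This is the claimed bound on $dens_G(X,Y)$.

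Now let $X'\subseteq X$ and $Y'\subseteq Y$ be $\frac{2\varepsilon}{\gamma}$-significant for $X$ and $Y$ respectively. If $2\varepsilon/\gamma\ge1$, no such subsets exist and the pair $(X,Y)$ is vacuously regular, so we may assume $2\varepsilon/\gamma<1$. Then
\[
|X'|>\tfrac{2\varepsilon}{\gamma}|X|>\tfrac{2\varepsilon}{\gamma}\gamma|A|=2\varepsilon|A|,
\]
and likewise $|Y'|>2\varepsilon|B|$. In particular $X'$ and $Y'$ are $\varepsilon$-significant for $A$ and $B$, so $|dens_G(X',Y')-d|<\varepsilon$.

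By the triangle inequality,
\[
|dens_G(X',Y')-dens_G(X,Y)|<2\varepsilon\le\tfrac{2\varepsilon}{\gamma},
\]
where the last step uses $\gamma\le1$. Therefore $(X,Y)$ is $\frac{2\varepsilon}{\gamma}$-regular.

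The only delicate point is keeping track of which set each significance is measured relative to. Significance for $X$ must be converted into significance for $A$, and this conversion is exactly where the factor $\gamma$ in $\frac{2\varepsilon}{\gamma}$ is used.
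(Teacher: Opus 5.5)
Your proof is correct and is the standard argument. The paper gives no proof of this fact and cites it as well known (see \cite{regu}); you prove (i) by splitting the atypical vertices into low- and high-degree ones, and (ii) by turning $\frac{2\eps}{\gamma}$-significance in $X$ into $|X'|>2\eps|A|$ and then applying the triangle inequality, and the preliminary reduction to $2\eps/\gamma<1$ is harmless but unnecessary, since your computation never uses it.
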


Call a vertex partition $V(G)=V_1\cup\dots\cup V_\ell$ an {\em $(\varepsilon,\rho)$-regular partition} if 
\begin{enumerate}
	\item[(i)] $|V_1|=|V_2|=\dots=|V_\ell|$;
	\item[(ii)]  $V_i$ is independent for all $i\in[\ell]$; and
	\item[(iii)] for all $1\le i<j\le \ell$, the pair $(V_i,V_j)$ is $\varepsilon$-regular, and either $dens_G(V_i,V_j)>\rho$ or $dens_G(V_i,V_j)=0$.
\end{enumerate}

The following simple fact is 
 well-known (see e.g.~\cite{LKS4}).
 \begin{fact}\label{fact:typical}
      For 
      each part $V_i$ of an $(\eps,\rho)$-regular partition, and   for any set $\mathcal X$ of $\eps$-significant subsets of clusters $V_j$ with $j\neq i$, all but
      at most $2\sqrt{\eps}|V_i|$ vertices of $V_i$ are 
      typical with respect to all but at most $\sqrt \eps|\mathcal X|$ of the sets in $\mathcal X$.
 \end{fact}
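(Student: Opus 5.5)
We prove Fact~\ref{fact:typical} by a double-counting argument built on Fact~\ref{fact:1}(i). Fix the part $V_i$ and the collection $\mathcal X$ of $\eps$-significant sets $Y\subseteq V_j$, $j\neq i$. For each $Y\in\mathcal X$, the pair $(V_i,V_j)$ is $\eps$-regular by property (iii) of an $(\eps,\rho)$-regular partition. Since $Y$ is $\eps$-significant for $V_j$, Fact~\ref{fact:1}(i) applies. Hence the set $B_Y\subseteq V_i$ of vertices not typical to $Y$ has $|B_Y|\le 2\eps|V_i|$.

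When the density of $(V_i,V_j)$ is zero, every vertex has degree $0$ into $Y$. Such vertices are trivially typical, since the lower bound $(0-\eps)|Y|$ is negative. So this case needs no separate treatment, and the bound on $|B_Y|$ holds uniformly.

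We double count the pairs $(v,Y)$ with $v\in V_i$, $Y\in\mathcal X$ and $v\in B_Y$. Their number is at most $\sum_{Y\in\mathcal X}|B_Y|\le 2\eps|V_i||\mathcal X|$. Call $v\in V_i$ \emph{bad} if it is non-typical to more than $\sqrt\eps|\mathcal X|$ sets of $\mathcal X$. Each bad vertex contributes more than $\sqrt\eps|\mathcal X|$ pairs. So the number of bad vertices is less than
\[
\frac{2\eps|V_i||\mathcal X|}{\sqrt\eps|\mathcal X|}=2\sqrt\eps|V_i|
\]
(when $\mathcal X=\es$ there is nothing to prove). Every vertex that is not bad is typical with respect to all but at most $\sqrt\eps|\mathcal X|$ sets of $\mathcal X$, which is the claim.

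There is no real obstacle here; this is a Markov-type averaging argument. The only points to check are that Fact~\ref{fact:1}(i) applies to every $Y$, which holds because each $Y$ is $\eps$-significant and every pair of distinct parts is $\eps$-regular, and that the typicality notion for a vertex of $V_i$ uses the density $dens_G(V_i,V_j)$ of the pair containing $Y$. Several sets of $\mathcal X$ may lie in the same cluster $V_j$; this causes no trouble, since the count runs over elements of $\mathcal X$, not clusters.
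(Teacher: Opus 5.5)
Your proof is correct. The paper gives no proof of this fact and only cites it as well known (e.g.\ from \cite{LKS4}); your Markov-type double count of non-typical pairs, with each pair bounded via Fact~\ref{fact:1}(i), is the standard argument behind it. Your separate treatment of zero-density pairs is harmless but unnecessary, since such pairs are $\eps$-regular by property (iii) and are therefore already covered by Fact~\ref{fact:1}(i).
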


 Szemer\'edi's  regularity lemma~\cite{Sze78} states that for any  $\varepsilon>0$,  any sufficiently large  graph admits a regular partition of bounded size. 

\begin{lemma}[Szemer\'edi's regularity lemma - Degree form~\cite{regu}]\label{reg:deg}
For all $\varepsilon>0$  there are $n_0, M$ with $n_0,M\ge\frac 1\eps$ such that the following holds for all $\rho\in[0,1]$, all $n\ge n_0$, and any $n$-vertex graph $G$. There is a subgraph $G'$ of $G$ such that
\begin{enumerate}
\item  $|V(G)|- |V(G')|\le \varepsilon n$,  
\item  for all $x\in V(G')$ we have that $\deg_{G'}(x)\ge \deg_G(x)-(\rho+\varepsilon)n$, and
\item $G'$ admits an $(\varepsilon,\rho)$-regular partition into $\ell$ parts, with $\frac 1\eps\le \ell\le M$. 
\end{enumerate}
\end{lemma}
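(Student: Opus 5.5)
The statement is the standard degree form, and I would derive it from the classical Szemer\'edi regularity lemma~\cite{Sze78}. Given $\eps$, fix an auxiliary $\eps'\ll\eps$ (say $\eps'\le \eps^4/100$). Apply the classical lemma with parameter $\eps'$ and lower bound $m_0\ge 2/\eps'$ on the number of parts. This yields $n_0$ and $M'$, and a partition $V(G)=V_0\cup V_1\cup\dots\cup V_m$ with the following properties:
\begin{itemize}
\item $m_0\le m\le M'$ and $|V_0|\le\eps' n$;
\item $|V_1|=\dots=|V_m|$;
\item all but at most $\eps' m^2$ of the pairs $(V_i,V_j)$ are $\eps'$-regular.
\end{itemize}
Write $L=|V_i|\le n/m$.

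Next I would clean the graph to obtain $G''$. Delete $V_0$, and delete every edge inside some $V_i$. Delete every edge of every pair $(V_i,V_j)$ that is not $\eps'$-regular. Finally, delete every edge of every pair whose density is at most $\rho+\eps/4$.

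I would then bound the degree loss of each vertex $x\in V_i$ under this cleaning:
\begin{itemize}
\item Edges into $V_0$ account for at most $\eps' n$.
\item Edges inside $V_i$ account for at most $L\le n/m\le \eps' n$.
\item Low-density pairs account for at most $\sum_j(\rho+\eps/4)L\le(\rho+\eps/4)n$. Here I need an averaging step, since it is the pair density, not $x$'s own degree, that is bounded. By regularity, at most $\eps' L$ vertices per pair have degree exceeding $(\rho+\eps/4+\eps')L$ into $V_j$. So the number of (vertex, pair) incidences where $x$ is atypical is at most $\eps' m\cdot mL$. Hence all but $\sqrt{\eps'}n$ vertices are atypical in at most $\sqrt{\eps'}m$ pairs. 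Each such vertex loses at most $(\rho+\eps/4+\eps')n+\sqrt{\eps'}n$ here.
\item Irregular pairs contain at most $\eps' m^2L^2\le\eps' n^2$ edges in total. So at most $2\sqrt{\eps'}n$ vertices lose more than $\sqrt{\eps'}n$ through them.
\end{itemize}
Call a vertex \emph{bad} if it is one of these exceptional $O(\sqrt{\eps'})n$ vertices. Every non-bad vertex loses at most $(\rho+\eps/2)n$.

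Then I would equalise the cluster sizes. Discard every cluster containing more than $(\eps')^{1/4}L$ bad vertices; there are at most $O((\eps')^{1/4})m$ such clusters. From each remaining cluster, remove its bad vertices together with arbitrary further vertices, so that every surviving cluster has exactly $\lceil(1-(\eps')^{1/4})L\rceil$ vertices. The total number of removed vertices is $O((\eps')^{1/4})n\le\eps n/2$. This gives condition~1 and condition~(i) of the partition. Condition~(ii) holds because intra-cluster edges were deleted.

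For condition~(iii), apply Fact~\ref{fact:1}(ii) with $\gamma=1/2$. Every surviving pair that kept its edges is $2\eps'/\gamma\le\eps$-regular, with density at least $\rho+\eps/4-\eps'>\rho$. Every other pair has density $0$, and is therefore trivially $\eps$-regular.

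The last step is condition~2. A surviving vertex loses at most $(\rho+\eps/2)n$ from the cleaning, plus at most $\eps n/2$ from neighbours removed during the trimming. This totals at most $(\rho+\eps)n$. Set $\ell$ to be the number of surviving clusters. Then $\ell\ge m/2\ge 1/\eps$ and $\ell\le M:=M'$, and $n_0$ can be enlarged so that $n_0,M\ge1/\eps$. The argument is uniform in $\rho\in[0,1]$, since $\rho$ only enters the cleaning threshold.

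I expect the main obstacle to be the bookkeeping between two competing demands: equal part sizes with every pair regular, and the per-vertex degree guarantee. The low-density deletion can hurt an individual vertex more than the average suggests. This is why I route both atypical vertices and vertices heavily hit by irregular pairs into the ``bad'' set, and then absorb them by the trimming step. The trimming is harmless for regularity thanks to Fact~\ref{fact:1}(ii), and it is harmless for density thanks to the $\eps/4$ margin built into the deletion threshold.
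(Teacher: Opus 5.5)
Your derivation is correct and is the standard one. The paper gives no proof of this lemma and simply cites \cite{regu}, where the degree form comes from the classical regularity lemma by this same cleaning (delete irregular and sparse pairs, push exceptional vertices out, equalise the clusters, and use Fact~\ref{fact:1}(ii) to keep regularity and the density gap). The only slip is numerical: your bounds let the trimming remove up to about $4(\eps')^{1/4}n$ vertices, which for $\eps'=\eps^4/100$ is roughly $1.26\,\eps n$ rather than $\eps n/2$, so take $\eps'$ a bit smaller, e.g.\ $\eps'\le \eps^4/10^4$.
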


The {\em $(\varepsilon,\rho)$-reduced graph $R$ of $G$}  with respect to $G'$ and the $(\varepsilon,\rho)$-regular partition given by Lemma~\ref{reg:deg}   is the graph on vertex set $\mathcal V=\{V_i:i\in[\ell]\}$, where $V_iV_j$ is an edge if and only if $dens_{G'}(V_i,V_j)>0$. Although a reduced graph is associated to $G'$ and a partition of $G'$ (and different $G'$s or partitions could give rise to different reduced graphs), we often omit the reference to $G'$ and~the~partition.
We  assign weights to the edges of $R$: Namely, each edge~$V_iV_j$ has weight $d(V_i, V_j)=dens_{G'}(V_i, V_j)$.
 If $\mathcal W\subseteq \mathcal V$ then let $d(V_i,\mathcal W):=\sum_{W\in\mathcal W}d(V_i,W)$ and   $d(V_i):=d(V_i, \mathcal V)$.

A reduced graph~$R$ inherits many  properties of $G$, such as the edge density or the minimum degree (scaled to the order of $R$). Indeed, letting $\delta(R)$ and $d(R)$ denote the minimum and average {\it weighted} degree of $R$,   the following is well-known (see e.g.~\cite{regu}).

\begin{fact}\label{fact:2} 
Let $0<2\varepsilon\le\rho\le \min\{\tfrac{\alpha}{2}, \tfrac{\gamma}{2}\}$ and let $G$ be an $n$-vertex graph with minimum degree at least $ {\alpha} n$ and average degree at least $ \gamma n$. Then any $(\varepsilon,\rho)$-reduced graph $R$ of~$G$ satisfies  $\delta(R)\ge ( {\alpha}-2\rho)|R|$ and $d(R)\ge(\gamma-2\rho)|R|$. 
	\end{fact}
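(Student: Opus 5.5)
The statement to prove is Fact~\ref{fact:2}. I would prove it by transferring degrees of vertices of $G$ to weighted degrees of clusters, via the subgraph $G'$ from Lemma~\ref{reg:deg}. I take the reduced graph to be built from $G'$ with the $(\eps,\rho)$-regular partition $V_1,\dots,V_\ell$, each part of size $m$, with $\ell m=|V(G')|\ge(1-\eps)n$. The key identity is that for each cluster $V_i$,
\[
d(V_i)=\sum_{j\ne i}\frac{e_{G'}(V_i,V_j)}{m^2}=\frac{1}{m^2}\sum_{x\in V_i}\deg_{G'}(x).
\]
This holds because $V_i$ is independent in $G'$ and every edge of $G'$ runs between two parts of the partition. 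Hence $d(V_i)=\frac{1}{m}\cdot$ (average $G'$-degree of the vertices of $V_i$).

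For the minimum degree, I use property~2 of Lemma~\ref{reg:deg}: every $x\in V(G')$ has $\deg_{G'}(x)\ge \alpha n-(\rho+\eps)n$. So
\[
d(V_i)\ge \frac{(\alpha-\rho-\eps)n}{m}\ge(\alpha-\rho-\eps)\ell .
\]
Here I used $n\ge \ell m$, and I may assume $\alpha-\rho-\eps\ge0$ since otherwise the bound is trivial. With $\eps\le\rho/2$ this gives at least $(\alpha-\tfrac32\rho)|R|\ge(\alpha-2\rho)|R|$, as $|R|=\ell$.

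For the average degree, I sum the identity over $i$:
\[
d(R)=\frac1\ell\sum_i d(V_i)=\frac{2e(G')}{\ell m^2}.
\]
Next I bound $2e(G')=\sum_{x\in V(G')}\deg_{G'}(x)$ from below. I have
\[
\sum_{x\in V(G')}\deg_{G'}(x)\ge \sum_{x\in V(G')}\deg_G(x)-(\rho+\eps)n\,|V(G')| .
\]
Also $\sum_{x\in V(G')}\deg_G(x)\ge 2e(G)-\eps n\cdot n\ge(\gamma-\eps)n^2$, because the at most $\eps n$ deleted vertices each have degree at most $n$. Combining, $2e(G')\ge(\gamma-\rho-2\eps)n^2$. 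Dividing by $\ell m^2$ and using $n\ge\ell m$ gives
\[
d(R)\ge(\gamma-\rho-2\eps)\frac{n^2}{\ell m^2}\ge(\gamma-\rho-2\eps)\ell\ge(\gamma-2\rho)|R|,
\]
where the last step uses $2\eps\le\rho$.

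The only delicate point is the direction of the normalisation. Since $n\ge\ell m$, dividing by $m$ instead of $n/\ell$ only helps, so no loss arises there. The main obstacle is instead accounting for the removed vertices, which costs an additive $\eps n^2$ in the edge count. The assumption $2\eps\le\rho$ exactly absorbs this loss, and the conditions $\rho\le\alpha/2,\gamma/2$ are only needed to keep the resulting bounds nonnegative and meaningful.
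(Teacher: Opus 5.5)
Your proof is correct. The identity $d(V_i)=\frac{1}{m^2}\sum_{x\in V_i}\deg_{G'}(x)$ uses that the clusters are independent and partition $V(G')$; combined with properties 1 and 2 of Lemma~\ref{reg:deg} and $n\ge \ell m$, it gives $\delta(R)\ge(\alpha-\rho-\eps)|R|$ and $d(R)\ge(\gamma-\rho-2\eps)|R|$, which are slightly stronger than claimed. The paper gives no proof of this fact and only cites it as well known, and your degree-counting argument is the standard one behind it.
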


The following lemma encapsulates the standard approach for  embedding a small tree  into a regular pair. Similar statements have appeared e.g.~in~\cite{bps}. 

\begin{lemma}\label{lem:T1}
Let $0<\beta\le \varepsilon \le \tfrac{1}{250}$. Let $(V_1, V_2)$ be an $(\varepsilon,\varepsilon^{1/5})$-regular pair  with $|V_1|=|V_2|=m$, and  for $i=1,2$, let $V_i'\subseteq V_i$ be such that $|V_i'|>\varepsilon^{1/3}m$. Let $v\in V_1$ 
 see at least $\eps^{1/10}m$ vertices of $V_2'$. 
Then any tree $T$ on at most $\beta m$ vertices can be embedded into $V_1'\cup\{v\}\cup V_2'$, with its root $r$ mapped to $v$.  Furthermore, if $r'\in V(T)$ is at even distance at least four from $r$, then  $r'$ can be mapped to any 
 $v'\neq v$ of~$V_1'$ that sees at least $\eps^{1/10}m$ vertices of $V_2'$. 
\end{lemma}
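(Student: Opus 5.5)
We embed $T$ greedily in breadth-first order starting from $r\mapsto v$, always mapping vertices of odd distance from $r$ into $V_2'$ and vertices of even distance into $V_1'$. The invariant is that every vertex already embedded but with unembedded children is \emph{typical} into the unused part of the opposite side. Concretely, vertices embedded in $V_1'$ have at least $\eps^{1/10}m/2$ unused neighbours in $V_2'$, and vertices in $V_2'$ have at least $\eps^{1/10}m/2$ unused neighbours in $V_1'$. We take the density of the pair to be at least $\eps^{1/5}$, so typicality to a significant set $Y$ gives degree at least $(\eps^{1/5}-\eps)|Y|$.

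The key steps are as follows.

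\begin{enumerate}
\item Since $|T|\le\beta m\le\eps m$, at every stage the unused parts $U_i\subseteq V_i'$ satisfy $|U_i|\ge \eps^{1/3}m-\eps m\ge \eps^{1/3}m/2$. So they remain $\eps$-significant, and by Fact~\ref{fact:1}(i) at most $2\eps m$ vertices of $V_{3-i}$ are atypical to $U_i$.
\item To embed a child $c$ of an embedded vertex $x$, we choose its image among the unused neighbours of $x$ on the opposite side. We must pick one that is typical to the current unused set on the other side, and also typical to a few future versions of it.
\end{enumerate}

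The subtlety is that the unused set shrinks as we go. We handle this by only requiring typicality with respect to one fixed reference set per side. Put $Y_1:=V_1'\setminus\{v\}$ and $Y_2:=V_2'$. A vertex typical to $Y_i$ has at least $(\eps^{1/5}-\eps)|Y_i|\ge \eps^{1/5}\eps^{1/3}m/2$ neighbours in $Y_i$. At most $\beta m$ of these are ever used, which is tiny compared to $\eps^{8/15}m/2$ since $\beta\le\eps$. Hence such a vertex always retains enough unused neighbours.

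For the root, the hypothesis gives $\eps^{1/10}m$ neighbours in $V_2'$, which is even more than required. For the other vertices, $x$ always has at least $\eps^{8/15}m/4$ unused neighbours on the opposite side, of which at most $2\eps m$ are atypical to $Y_{\text{other}}$. Since $\eps^{8/15}/4>2\eps$ for $\eps\le1/250$, a good choice always exists and the induction goes through.

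For the furthermore part, let $r'$ be at even distance at least four from $r$, with path $r=x_0,x_1,\dots,x_{2t}=r'$ where $t\ge2$. We must arrange that $x_{2t-1}$ is mapped to a common neighbour of $v'$ and of the image of $x_{2t-2}$. To do this, we embed along the path first and treat $N(v')\cap V_2'$, which has size at least $\eps^{1/10}m$, as a significant target set $Y_3$. When embedding $x_{2t-2}\in V_1'$, we additionally demand that its image be typical to $Y_3\setminus\{\text{used}\}$. This excludes at most a further $2\eps m$ candidates, so a choice remains. The image of $x_{2t-2}$ then has at least $(\eps^{1/5}-\eps)\eps^{1/10}m\gg\beta m$ neighbours in $N(v')\cap V_2'$, and we pick $x_{2t-1}$ among these unused ones and set $r'\mapsto v'$. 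We must also make sure $v'$ is not used earlier, which is trivial: we simply exclude it. The condition $t\ge2$ ensures $x_{2t-2}\ne r$, so there is freedom in choosing it.

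After this, $v'$ needs to be typical for embedding its own children. It has $\eps^{1/10}m$ neighbours in $V_2'$, minus at most $\beta m$ used ones, which suffices. We then continue the greedy embedding of the rest of the tree as before.

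The main obstacle is only the bookkeeping of typicality with respect to shrinking sets. Fixing the reference sets $Y_1$, $Y_2$, $Y_3$ and using $\beta\le\eps\ll\eps^{8/15}$ resolves it.
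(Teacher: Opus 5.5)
Your proof is correct. The main part follows the paper's argument: a greedy levelwise embedding in which each new image is chosen typical to the opposite side, with Fact~\ref{fact:1}(i) bounding the atypical candidates by $2\eps m$. Measuring typicality against the fixed sets $Y_1,Y_2$ and subtracting the at most $\beta m$ vertices ever used is a tidier way of handling the shrinking unoccupied sets the paper refers to.

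The \emph{furthermore} part is where you genuinely differ. The paper embeds the path only up to $w=\varphi(x_{p-3})\in V_2'$. It then uses Fact~\ref{fact:1}(ii): the pair $(N(w)\cap V_1',N(v')\cap V_2')$ inherits regularity and density, so it spans an edge between unused typical vertices, and this edge becomes the image of $x_{p-2}x_{p-1}$. You instead look one step ahead and choose $\varphi(x_{2t-2})$ typical to $N(v')\cap V_2'$. By Fact~\ref{fact:1}(i) this costs only $2\eps m$ more candidates and directly gives many common neighbours of $\varphi(x_{2t-2})$ and $v'$. Your route needs only typicality and no inheritance of regularity to subpairs, so it is slightly more elementary. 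The paper's route finds both connecting vertices at once, without having to anticipate the target while choosing $x_{p-2}$.

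Three small repairs.
\begin{itemize}
\item The invariant you announce at the start (at least $\eps^{1/10}m/2$ unused neighbours on the other side) is not what typicality provides. A vertex typical to $Y_2$ is only guaranteed roughly $\eps^{1/5}\eps^{1/3}m$ neighbours there. The bound $\eps^{8/15}m/4$ you actually use later is the right one.
\item $x_{2t-1}$ may have children other than $r'$, so $\varphi(x_{2t-1})$ must also be chosen typical to $Y_1$. This costs nothing: $\varphi(x_{2t-2})$ and $v'$ have at least $(\eps^{1/5}-\eps)(\eps^{1/10}-\beta)m-\beta m$ unused common neighbours, and at most $2\eps m$ of them are atypical to $Y_1$.
\item For $x_{2t-2}$ you exclude $4\eps m+1$ candidates. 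Your rounded bound $\eps^{8/15}m/4$ does not quite cover this at $\eps=1/250$, but the unrounded bound $(\eps^{1/5}-\eps)(\eps^{1/3}m-1)-\beta m$ does.
\end{itemize}
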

\begin{proof}
First assume $r'$ does not exist. We
 construct the embedding   levelwise, starting by mapping $r$ to $v$. For the first level, we use that
$\deg(v,V_2')\ge\eps^{1/10}m$ to embed  the children of
$r$ in vertices typical to the unused part of $V_1'$.
All subsequent steps proceed levelwise as follows. At each step $i$ we ensure that all vertices of level $i$ are embedded into unused vertices of $V_j'$ that are typical   to the unoccupied vertices of $V_{3-j}'$, for the appropriate $j\in\{1,2\}$. This is possible as by Fact~\ref{fact:1}~(i),  at each step $i$, the degree of a typical vertex into the unoccupied vertices on the other side is at least $10\eps m$, while at most $2\eps m+\beta m$  vertices are non-typical or already occupied.

 Now assume $r'$ exists, and lies at even distance $p\ge 4$ from $r$. Similarly as above, we   construct a path of length $p-3$ that starts at $v$ and  alternates between $V_1'$ and $V'_2$, always embedding into vertices of $V_j'$ that are typical   to the unoccupied vertices of $V_{3-j}'$, for the appropriate $j\in\{1,2\}$. The last vertex $w$ of this path is in $V'_2$, and by Fact~\ref{fact:1}~(ii), the pair $(N(w)\cap V'_1, N(v')\cap V'_2)$ has density at least $\eps^{1/4}$ and thus spans at least one edge between vertices that are unused and typical to the respective $V'_j$. So we can complete the $v$--$v'$ path, and  embed the remainder of~$T$ in $V_1'\cup V'_2$ similarly as in the previous paragraph, but component by component.  
\end{proof}

\subsection{Subgraphs of robust graphs}\label{sec:hoststruc1}

We start with a well-known folklore fact, proven by repeatedly deleting vertices 
of degree less than $\frac k2$.
\begin{lemma}\label{folklore1}
    Each graph $H$ of average degree~$d(H)>k$ has a subgraph of minimum degree at least $\frac k2$ and average degree at least $d(H)$. 
\end{lemma}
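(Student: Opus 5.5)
We argue by repeatedly deleting low-degree vertices and tracking the quantity $e(H') - \frac{k}{2}|V(H')|$. Set $H_0 := H$. As long as the current graph $H_i$ has a vertex $v$ with $\deg_{H_i}(v) < \frac k2$, let $H_{i+1} := H_i - v$. The process stops at some graph $H^*$ in which every vertex has degree at least $\frac k2$, provided $H^*$ is nonempty. We must show two things: that $H^*$ is nonempty, and that $d(H^*)\ge d(H)$.

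The key observation is that deleting a vertex of degree less than $\frac k2$ can only increase the average degree, as long as the average degree is above $k$. Write $e_i = e(H_i)$ and $n_i = |V(H_i)|$, and suppose $d(H_i) = 2e_i/n_i > k$. Deleting $v$ gives $e_{i+1} > e_i - \frac k2$ and $n_{i+1} = n_i - 1$. Then
\[
d(H_{i+1}) = \frac{2e_{i+1}}{n_i - 1} > \frac{2e_i - k}{n_i - 1} \ge \frac{2e_i}{n_i},
\]
where the last inequality is equivalent to $2e_i \ge k n_i$, which holds since $d(H_i) > k$. So by induction $d(H_i) \ge d(H) > k$ for all $i$; more precisely, the sequence $d(H_i)$ is strictly increasing.

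Nonemptiness now follows immediately. A graph with average degree exceeding $k$ has at least one edge, so it has at least $k+2$ vertices. Hence the process can never reach the empty graph, or a graph with fewer vertices, while keeping the average degree above $k$. The process must therefore stop at a nonempty $H^*$ with $\delta(H^*) \ge \frac k2$ and $d(H^*) \ge d(H)$.

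There is no real obstacle here. The only point needing care is the one-line inequality showing that the average degree does not drop. This is exactly where the hypothesis $d(H) > k$ is used, rather than merely $d(H) > k/2$.
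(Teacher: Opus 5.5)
Your proof is correct and is the same argument the paper gives: it proves the lemma in one line by repeatedly deleting vertices of degree less than $\frac k2$, and your computation that each such deletion strictly increases the average degree while it exceeds $k$ fills in the details. One small wording slip: the bound $|V(H_i)|\ge k+2$ holds because the maximum degree is at least the average degree, which exceeds $k$, not merely because $H_i$ has an edge. Having an edge, which gives $|V(H_i)|\ge 2$ so that $n_i-1>0$, is all your induction actually needs.
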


Recall that in the introduction, we defined robust graphs as follows: If for every  $H\subsetneq G$, we have $d(H)<d(G)$ then $G$ is {\it robust}. Because of Lemma~\ref{folklore1}, 
 every  vertex  of a robust graph $G$ with
  $d(G)>k-2$ has degree at least $\frac{k-1}{2}$. We can generalize this observation by considering a set $S\subsetneq V(G)$ instead of a vertex. 
 \begin{lemma}\label{robustlemma}
   Let $G$ be a robust graph with $d(G)>k-2$ and let $\es\neq S\subsetneq V(G)$. Then $\sum_{v\in S}d_G(v)+ e(S, G-S)>(k-2)|S|$.  
 \end{lemma}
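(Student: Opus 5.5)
Apply robustness to the proper subgraph $H := G - S$, which is nonempty because $S \subsetneq V(G)$. Robustness gives $d(H) < d(G)$. The plan is to write both average degrees as edge counts divided by vertex counts and rearrange; the only care needed is to keep the inequality strict and to check that the normalisation matches the claimed expression $\sum_{v\in S} d_G(v) + e(S,G-S)$.

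Set $n = |V(G)|$, $s = |S|$ and $m = e(G)$, so that $d(G) = 2m/n$. The edges of $H$ are exactly those of $G$ with no endpoint in $S$. Hence
\[
e(H) = m - e(S) - e(S, G-S),
\]
where $e(S)$ is the number of edges inside $S$. We also have
\[
\sum_{v\in S} d_G(v) = 2e(S) + e(S,G-S),
\]
and therefore
\[
\sum_{v\in S} d_G(v) + e(S,G-S) = 2\bigl(e(S) + e(S,G-S)\bigr) = 2\bigl(m - e(H)\bigr).
\]
So the claim is equivalent to $2m - 2e(H) > (k-2)s$.

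The inequality $d(H) < d(G)$ reads $2e(H)/(n-s) < 2m/n$, which gives
\[
2e(H) < \frac{2m(n-s)}{n} = 2m - d(G)\, s .
\]
Rearranging,
\[
2m - 2e(H) > d(G)\, s > (k-2)s,
\]
where the last step uses $d(G) > k-2$ and $s \ge 1$.

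There is no real obstacle here. The two points to watch are that $H$ must be nonempty, which holds since $S \ne V(G)$, so that $d(H)$ is defined and robustness applies, and that the strict inequality is preserved throughout, which the chain above does.
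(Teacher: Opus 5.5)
Your proof is correct and follows the same approach as the paper. The paper argues by contradiction in one line: if the inequality failed, then $d(G-S)>d(G)$, contradicting robustness. You carry out the identity $\sum_{v\in S}d_G(v)+e(S,G-S)=2\bigl(e(G)-e(G-S)\bigr)$ explicitly and run the same comparison directly rather than by contradiction.
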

 \begin{proof}
     Indeed, otherwise $d(G-S)>d(G)$, a contradiction to $G$ being robust.
 \end{proof}

In a different direction, 
the next lemma ensures that we can do slightly better than Lemma~\ref{folklore1} if $k$ is linear in $n$. We state it for weighted graphs as we will apply it to the reduced graph of our host graph. 

\begin{lemma}\label{mindegsubgr}
Let $0<\alpha<\frac 14$ and $k,n\in \mathbb N$ be such that $k\ge \alpha n$.
    Let $H$ be a weighted $n$-vertex graph with weights in $[0,1]$. If $d(H)>(1-\alpha^3)k$ then $H$ has a subgraph $H'$ with $\delta(H')\ge (1+\alpha^3)\frac k2$ and $d(H')>(1-\alpha)k$. 
\end{lemma}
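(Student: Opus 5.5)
The plan is to use the same greedy deletion that proves Lemma~\ref{folklore1}, but with the higher threshold $(1+\alpha^3)\frac k2$. The new ingredient is the linear lower bound $k\ge\alpha n$ together with the trivial bound $e(X)\le |X|^2/2$ for weights in $[0,1]$. These show that the process stops long before the graph becomes small. Throughout, $e(\cdot)$ denotes total edge weight and degrees are weighted.

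\textbf{Step 1 (the deletion process).} Starting from $H_0=H$, as long as the current graph $H_t$ has a vertex of weighted degree less than $(1+\alpha^3)\frac k2$, delete such a vertex to obtain $H_{t+1}$. Each deletion removes less than $(1+\alpha^3)\frac k2$ edge weight, and $e(H_0)=d(H)n/2>(1-\alpha^3)kn/2$. Writing $s=n-t$ for the number of vertices of $H_t$, we therefore get
\[
e(H_t)>\tfrac k2\bigl[(1-\alpha^3)n-(1+\alpha^3)t\bigr]=\tfrac k2\bigl[(1+\alpha^3)s-2\alpha^3 n\bigr].
\]

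\textbf{Step 2 (average degree stays large while $s$ is not too small).} The condition $2e(H_t)/s>(1-\alpha)k$ holds whenever $(1+\alpha^3)s-2\alpha^3n\ge(1-\alpha)s$. This rearranges to $s(\alpha+\alpha^3)\ge 2\alpha^3 n$, that is,
\[
s\ge s_0:=\frac{2\alpha^2 n}{1+\alpha^2}.
\]
So while at least $s_0$ vertices remain, the current graph has average degree exceeding $(1-\alpha)k$. In particular it has positive edge weight and is nonempty.

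\textbf{Step 3 (the process stops before $s$ drops below $s_0$).} This is the key step. Suppose some $H_t$ has $s$ vertices with $s_0\le s<s_0+1$. Then by Step 2,
\[
e(H_t)>\tfrac{(1-\alpha)k s}{2}\ge \tfrac{(1-\alpha)\alpha n s}{2}.
\]
Since weights lie in $[0,1]$, we also have $e(H_t)\le s^2/2$. Combining these forces $s>(1-\alpha)\alpha n$. However, $s<s_0+1\le 2\alpha^2n+1$, and $2\alpha^2 n<(1-\alpha)\alpha n$ because $\alpha<\frac14<\frac13$. This gives a contradiction, provided the additive $+1$ is absorbed.

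To avoid rounding issues, I will run the argument directly with the first time $s$ falls to at most $\lfloor(1-\alpha)\alpha n\rfloor$. Since $2\alpha^2\le(1-\alpha)\alpha-\Omega(\alpha^2)$, this value still lies above $s_0$. If $n$ is small relative to $1/\alpha$, the integrality slack needs a line of care, possibly using strictness of the inequalities. I expect this bookkeeping to be the only delicate point.

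\textbf{Step 4 (conclusion).} By Step 3, the process terminates at some $H'=H_t$ with $s\ge s_0$. Termination means $\delta(H')\ge(1+\alpha^3)\frac k2$. Step 2 gives $d(H')>(1-\alpha)k$. Hence $H'$ is the required subgraph.
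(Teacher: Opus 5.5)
Your deletion argument is correct and close to the paper's. The one point you leave open is a real but small gap, and the workaround you sketch does not close it.

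\textbf{The fix.} The additive $+1$ in Step~3 disappears once you use that $H_t$ is a simple graph with weights at most $1$. Then every weighted degree in an $s$-vertex graph is at most $s-1$, so $d(H_t)\le s-1$; equivalently, bound $e(H_t)$ by $\binom{s}{2}$ instead of $s^2/2$. If the process ends below $s_0$, it passes through $s=\lceil s_0\rceil$. Step~2 applies there, and gives
\[
s_0>s-1\ge d(H_t)>(1-\alpha)k\ge(1-\alpha)\alpha n\ge 2\alpha^2 n\ge s_0 .
\]
This is a contradiction for every $n$, using only $\alpha\le\frac13$.

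Your suggested workaround via $\lfloor(1-\alpha)\alpha n\rfloor$ does not work as stated. The claim that this value lies above $s_0$ fails when $\alpha n$ is small. For instance, if $(1-\alpha)\alpha n<1$ the floor is $0$, and at that stage Step~2 gives you nothing.

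\textbf{Comparison with the paper.} With this repair, your route differs from the paper's only in how you show the process does not delete everything. The paper first bounds the total removed weight. The last $\alpha k$ deleted vertices have degree below $\alpha k$ because few vertices remain, so $e(H)\le(n-\alpha k)(1+\alpha^3)\frac k2+(\alpha k)^2$. This contradicts $k\ge\alpha n$ together with $\alpha<\frac14$. Only afterwards does the paper run the edge count of your Steps~1--2 on the final $H'$. There it uses $|H'|\ge\delta(H')\ge\frac k2\ge\frac{\alpha n}2$ to ensure $|H'|$ is above (essentially) your threshold $s_0$.

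You instead establish the average-degree bound for every intermediate graph with at least $s_0$ vertices. You then apply ``degree $<$ number of vertices'' once, at $s=\lceil s_0\rceil$. This merges the two parts into a single contradiction and avoids the separate estimate on the last deleted vertices. The paper's version keeps the two estimates separate, which makes each one a standard, independently checkable computation.
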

\begin{proof}
For any subgraph $H'$ of $H$ let $e^w_{H'}$ be the sum of the weights of the edges of~$H'$. In particular, $e^w_H>(1-\alpha^3)k\frac n2$ by assumption. Successively delete from $H$ any vertex of weighted degree below $(1+\alpha^3)\frac k2$ as long as this is  possible. The remaining graph~$H'$ has  $\delta(H')\ge (1+\alpha^3)\frac k2$  unless we deleted all vertices of~$H$. In the latter~case,    \begin{align*}(1-\alpha^3)k\frac n2< e^w_H &\le (n-\alpha k)(1+\alpha^3)\frac k2  +(\alpha k)^2 
    \le (1+\alpha^3)k\frac n2  -\frac\alpha 2 k^2+\alpha^2 k^2,
    \end{align*} 
    where the second inequality relies on the fact that at the moment of its deletion, every vertex $v$ has degree below $(1+\alpha^3)\frac k2$, and even below $\alpha k$ if $v$ is one of the last $\alpha k$ vertices to be deleted.  
    So as $k\ge \alpha n$, we obtain
     $\alpha^3 k  n 
     \ge \frac\alpha 2 k^2-\alpha^2 k^2\ge \frac\alpha 4 k^2 \ge \frac{\alpha^2}4 kn, $     
  which  contradicts the fact that $\alpha<\frac 14$.

     It only remains to see that $d(H')>(1-\alpha)k$. For this it suffices to observe that \begin{align*}
      e^w_{H'} - (1-\alpha)\frac{k}2|H'|
      & >(1-\alpha^3)k\frac n2- (n-|H'|)(1+\alpha^3)\frac k2 - (1-\alpha)\frac{k}2|H'|
\\ &   \ge 
-\alpha^3kn + \alpha|H'|\frac k2 
\\ &   \ge 
-\alpha^3kn + \delta(H')\frac{\alpha^2 n}2 
\ \ge \
(\frac{\alpha^2}4-\alpha^3)kn 
\ > \
     0.
     \end{align*}
     \vskip-.9cm
\end{proof}

Next, we show that any graph of average degree close to $k$ either has a substantial number of vertices of degree substantially above $k$ or contains every $k$-vertex tree $T$ as a subgraph. For the proof we need Theorem~\ref{thm:realmain}. 
\begin{lemma}
\label{mayaslemma1}
Let $0<\alpha<1$, $k,n\in \mathbb N$ be such that $k\ge \alpha^{1/10}  n$ and  
$\alpha^{1/50}\le \frac\mu{100}$ where~$\mu$ is the 
constant from Theorem~\ref{thm:realmain}. Let $H$ be 
    an  $n$-vertex subgraph   with $d(H) \ge (1-\alpha)k$   of a robust graph $G$ with  $d(G)>k-2$. Then  at least one of the following~holds:
    \begin{enumerate}
        \item[(a)] $G$ contains each $k$-vertex tree; or 
        \item[(b)] $H$ has  at least $\alpha^{1/4} n$ vertices $v$  with $d(v)>(1+\alpha^{1/4}) k$. 
    \end{enumerate}
\end{lemma}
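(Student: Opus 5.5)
We argue by contradiction: suppose (b) fails, and show that then $H$ contains a subgraph of minimum degree at least $(1-\mu)k$. Since $G$ is robust with $d(G)>k-2$, Theorem~\ref{thm:realmain} then gives (a).

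\textbf{Step 1: split the vertices by degree.} Let $L$ be the set of vertices $v$ of $H$ with $d(v)>(1+\alpha^{1/4})k$; by assumption $|L|<\alpha^{1/4}n$. Every vertex outside $L$ has degree at most $(1+\alpha^{1/4})k$, while $d(H)\ge(1-\alpha)k$. The vertices of $L$ carry total degree at most $|L|n<\alpha^{1/4}n^2\le\alpha^{1/4-1/5}k^2/\ldots$. We make this precise using $n\le \alpha^{-1/10}k$: the degree sum over $L$ is at most $\alpha^{1/4}n^2\le\alpha^{1/4-1/5}kn=\alpha^{1/20}kn$. Hence the vertices of $V(H)\setminus L$ have average degree at least roughly $(1-\alpha-\alpha^{1/20})k$, and each has degree at most $(1+\alpha^{1/4})k$.

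\textbf{Step 2: most vertices have degree close to $k$.} This is a Markov-type count. Let $P$ be the set of vertices of $V(H)\setminus L$ with degree below $(1-\beta)k$, where $\beta:=\alpha^{1/50}\le\mu/100$. Because degrees in $V(H)\setminus L$ are capped at $(1+\alpha^{1/4})k$ and their average is at least $(1-2\alpha^{1/20})k$, we get $|P|\beta k\lesssim 3\alpha^{1/20}kn$. So $|P|\le 3\alpha^{1/20-1/50}n$, which is at most $\alpha^{1/40}n$ up to constants. Setting $Q:=V(H)\setminus(L\cup P)$, almost all of $H$ has degree in $[(1-\beta)k,(1+\alpha^{1/4})k]$.

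\textbf{Step 3: peel to a high-minimum-degree subgraph.} Delete $L\cup P$, and then repeatedly delete any vertex whose degree in the remaining graph drops below $(1-\mu)k$. Each vertex of $Q$ initially loses at most $|L\cup P|\le 2\alpha^{1/40}n\le2\alpha^{1/40-1/10}k$ neighbours. This bound is not small (the exponent is negative), so the per-vertex argument fails; we count edges instead.

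\textbf{The main obstacle: controlling the peeling by edge counts.} First, the edges of $H[Q]$ number at least
\[
\tfrac12(1-\alpha)kn-(|L|+|P|)n\ge \tfrac12\bigl(1-O(\alpha^{1/40-1/10})\bigr)kn,
\]
where we use $n\le\alpha^{-1/10}k$ to write $n^2\le\alpha^{-1/10}kn$. Since $1/40-1/10<0$, the constants may again be off, and I would re-tune the exponents: use $\beta=\alpha^{1/50}$ together with the bound $|L|n\le\alpha^{1/4}n^2\le\alpha^{3/20}kn$. This gives $|P|\le\alpha^{3/20-1/50}n$, so the losses are $O(\alpha^{13/100}n^2)=O(\alpha^{3/100}kn)$, which is small.

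Second, each vertex deleted during the peeling removes at most $(1-\mu)k$ edges. Suppose $t$ vertices of $Q$ are deleted. The surviving $|Q|-t$ vertices span at most $\tfrac12(|Q|-t)(1+\alpha^{1/4})k$ edges. Comparing this with the lower bound on $e(H[Q])$ gives $t\mu k\lesssim \alpha^{3/100}kn$, so $t\le \alpha^{3/100}n/\mu$, which is much smaller than $n$.

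Hence the peeling stops at a nonempty subgraph of minimum degree at least $(1-\mu)k$. Theorem~\ref{thm:realmain} then yields (a).
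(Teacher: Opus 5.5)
\textbf{There is a genuine gap in your final step:} the edge-counting peeling argument cannot bound $t$. If $t$ vertices of $Q$ are deleted, your two estimates are $e(H[Q])\le t(1-\mu)k+\tfrac12(|Q|-t)(1+\alpha^{1/4})k$ and $e(H[Q])\ge\tfrac12(1-\alpha)kn-O(\alpha^{3/100}kn)$. Comparing them gives only
\[
t\Bigl(\tfrac12-\mu-\tfrac12\alpha^{1/4}\Bigr)k\ \ge\ -\tfrac12(\alpha^{1/4}+\alpha)kn-O(\alpha^{3/100}kn),
\]
which is vacuous for $\mu<\tfrac12$ (and $\mu$ is a small constant here). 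A deleted vertex may carry up to $(1-\mu)k$ edges, whereas a surviving vertex accounts for only about $k/2$ edges. So in this bookkeeping, deleting vertices costs nothing. Your bound $t\mu k\lesssim\alpha^{3/100}kn$ comes out only if a degree sum is compared with an edge count, which is a factor-$2$ slip. This is the same reason Lemma~\ref{folklore1} yields only minimum degree about $k/2$: peeling at a threshold above half the average degree cannot be controlled by counting edges.

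\textbf{The fix is that no peeling is needed.} You abandoned the per-vertex argument only because you measured $|L\cup P|$ against $n$. In Step~1 you also wrote $\alpha^{1/4}n^2\le\alpha^{1/20}kn$. Converting both factors of $n$ via $n\le\alpha^{-1/10}k$ gives the sharper $\alpha^{1/4}n^2\le\alpha^{1/20}k^2$, and your re-tuned bound $\alpha^{3/20}kn$ is of the same strength.

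Measured against $k$, everything is small. First, $|L|<\alpha^{1/4}n\le\alpha^{3/20}k$. Second, the total degree deficit is $O(\alpha^{3/20}kn)=O(\alpha^{1/20}k^2)$, so $|P|\le2\alpha^{13/100}n\le2\alpha^{3/100}k$. Hence deleting $L\cup P$ directly leaves a nonempty graph $H[Q]$ with
\[
\delta(H[Q])\ \ge\ \bigl(1-\alpha^{1/50}-3\alpha^{3/100}\bigr)k\ \ge\ (1-\mu)k,
\]
and Theorem~\ref{thm:realmain} applies.

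This corrected argument is exactly the paper's proof. The paper uses threshold $(1-\alpha^{1/41})k$ for the low-degree set and bounds its size by $2\alpha^{1/40}k$ using $n\le\alpha^{-1/10}k$. It then deletes only that set, keeping the high-degree vertices.
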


\begin{proof}
     Let $X$ be the set of all
      vertices of $H$ having degree less than $(1-\alpha^{1/41} )k$, and let $Y$ be the set of all
      vertices of $H$ having degree above $(1+\alpha^{1/4})k$.  Assume (b) does not hold, i.e., $|Y|<\alpha^{1/4} n $.
     Then as  
      $\Delta(H)-d(H)\le \Delta(H)\le n\le \frac k{\alpha^{1/10}}$, we have
     \begin{align*}
         |X|\cdot \alpha^{1/40} k \le |X|\cdot (\alpha^{1/41}-\alpha) k &\le |Y|\cdot \big (\Delta(H)-d(H)\big )+ n(\alpha^{1/4}+\alpha) k \\ &< \alpha^{1/4} (\frac k{\alpha^{1/10}})^2+  \frac k{\alpha^{1/10}}\cdot 2\alpha^{1/4} k
        \  \le  \ 2 \alpha^{1/20} k^2.
     \end{align*}
   Thus $|X|\le 2
      \alpha^{1/40} k.$ Deleting $X$ from $H$ leaves a subgraph of  minimum degree exceeding $(1-\alpha^{1/41} -2\alpha^{1/40})k\ge (1-3\alpha^{1/41})k$. So by Theorem \ref{thm:realmain}, 
    (a) holds. 
\end{proof}

The next lemma shows we can find a subgraph of non-vanishing density between the set of high-degree vertices from the previous lemma and the rest.

\begin{lemma}
\label{twosides}
Let $\alpha>0$, $k,n\in \mathbb N$ be such that $k\ge \alpha^{1/10} n$ and  $\alpha^{1/50}\le \frac\mu{100}$  where $\mu$ is the 
constant from Theorem~\ref{thm:realmain}. Let $H$ be
    an  $n$-vertex subgraph with $d(H)>(1-\alpha)k$ of a robust graph $G$ with $k-2<d(G)<k$. Then at least one of the following holds:
    \begin{enumerate}
        \item[(a)] $G$ contains each $k$-vertex tree; or
         \item[(b)] $H$ contains a bipartite subgraph $(A_1,A_2)$ such that  
        \begin{enumerate}
        \item[(1)] $d_G(v)>(1+\alpha^{1/4}) k$ for each $v\in A_1$ and
        \item[(2)]  for $i=1,2$, each vertex in $A_i$ has at least $\alpha^{1/2}k$ neighbours in~$A_{3-i}$. 
          \end{enumerate}  
    \end{enumerate}
\end{lemma}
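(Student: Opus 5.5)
Assume (a) fails; we will produce the bipartite subgraph in (b) directly. The plan is to find many high-degree vertices using Lemma~\ref{mayaslemma1}, take a large bipartite subgraph with these vertices on one side, and then prune low-degree vertices. Robustness and $d(G)<k$ enter only through Lemma~\ref{mayaslemma1}.

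\textbf{Many high-degree vertices.} The hypotheses of Lemma~\ref{twosides} contain those of Lemma~\ref{mayaslemma1} (with $d(H)>(1-\alpha)k$). Since (a) fails, Lemma~\ref{mayaslemma1} shows that the set $Y$ of vertices $v\in V(H)$ with $d_H(v)>(1+\alpha^{1/4})k$ has $|Y|\ge\alpha^{1/4}n$. As $H\subseteq G$, every $v\in Y$ also satisfies $d_G(v)\ge d_H(v)>(1+\alpha^{1/4})k$. So condition (1) will hold automatically once we take $A_1\subseteq Y$.

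\textbf{A dense bipartite subgraph.} Let $E_Y$ be the set of edges of $H$ with at least one endpoint in $Y$. Then
\[
|E_Y|\ge \tfrac12\sum_{v\in Y}d_H(v)\ge \tfrac12|Y|k.
\]
Split $Y$ uniformly at random into $Y_1\cup Y_2$. Consider the bipartite graph $F$ of $H$-edges between $Y_1$ and $V(H)\setminus Y_1$. Each edge of $E_Y$ lies in $F$ with probability at least $1/2$:
\begin{itemize}[label={}]
\item if exactly one endpoint is in $Y$, the edge is in $F$ exactly when that endpoint lands in $Y_1$;
\item if both endpoints are in $Y$, the edge is in $F$ exactly when they are separated.
\end{itemize}
Hence some choice of split gives $e(F)\ge \tfrac14|Y|k\ge \tfrac14\alpha^{1/4}nk\ge\tfrac14\alpha^{1/4}k^2$, using $n\ge k$.

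\textbf{Pruning.} Repeatedly delete from $F$ any vertex with fewer than $\alpha^{1/2}k$ neighbours on the other side, as long as such a vertex exists. Each deletion removes fewer than $\alpha^{1/2}k$ edges, and at most $n\le k/\alpha^{1/10}$ vertices are deleted. So in total we remove fewer than $\alpha^{1/2-1/10}k^2=\alpha^{2/5}k^2$ edges. Since $\alpha^{1/50}\le\mu/100<1$ forces $\alpha$ to be tiny, we have $\alpha^{2/5}<\tfrac14\alpha^{1/4}$. Therefore some edges survive, and the remaining graph is nonempty.

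Let $A_1$ be the surviving part of $Y_1$ and $A_2$ the surviving part of $V(H)\setminus Y_1$. Then each vertex of $A_i$ has at least $\alpha^{1/2}k$ neighbours in $A_{3-i}$, which is (2), and $A_1\subseteq Y$ gives (1).

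\textbf{Main obstacle.} The only delicate point is the edge count: the loss from pruning, of order $\alpha^{1/2}k\cdot n$, must be smaller than the initial edge count $\tfrac14\alpha^{1/4}nk$. This is exactly where $k\ge\alpha^{1/10}n$ and the smallness of $\alpha$ are used. Everything else is a direct application of Lemma~\ref{mayaslemma1}.
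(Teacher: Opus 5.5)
Your proof is correct, and it differs from the paper's in how it shows that many edges cross the bipartition.

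\textbf{The paper's route.} The paper puts a set $A_1'$ of $\lceil\alpha^{1/4}n\rceil$ high-degree vertices entirely on one side and takes $A_2'=V(H)\setminus A_1'$. To count crossing edges it uses robustness together with $d(G)<k$, which gives $d(H[A_1'])<d(G)<k$. Hence each vertex of $A_1'$ sends at least its excess $\alpha^{1/4}k$ across, for at least $\alpha^{1/2}nk$ crossing edges. It then prunes via Lemma~\ref{folklore1}.

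\textbf{Your route.} You split $Y$ at random and keep only $Y_1$ on the high-degree side. Then every edge touching $Y$ crosses with probability $1/2$, so you need no control at all over the edges inside $Y$. You also get more crossing edges, namely $\frac14|Y|k$, because you use the full degree rather than just the excess. The cost is a probabilistic (or max-cut) step and a smaller $A_1$. The gain is that robustness enters only through Lemma~\ref{mayaslemma1}. Moreover, $d(G)<k$ is not used anywhere, since Lemma~\ref{mayaslemma1} needs only $d(G)>k-2$. So your argument shows that hypothesis is superfluous for this lemma. Your pruning step is just Lemma~\ref{folklore1} carried out by hand.

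\textbf{A small slip.} The hypotheses do not give $n\ge k$; they only give $n-1\ge d(H)>(1-\alpha)k$. This is harmless: compare the pruning loss, which is less than $\alpha^{1/2}nk$, directly with your initial count $\frac14\alpha^{1/4}nk$. For the same reason your closing remark is slightly off. The assumption $k\ge\alpha^{1/10}n$ is not needed in the pruning step at all; it is used only in the application of Lemma~\ref{mayaslemma1}.
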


\begin{proof}
    Assume (a) does not hold and let $A'_1$ be a set of $\lceil \alpha^{1/4} n\rceil$ vertices~$v$  with $d(v)>(1+\alpha^{1/4}) k$, as given by  Lemma~\ref{mayaslemma1}~(b). Set $A'_2=V(H)\setminus A'_1$. Since $G$ is robust and $H[A_1']\subseteq G$, the average degree of $H[A'_1]$ is at most~$k$. So there are at least $|A'_1|\cdot \alpha^{1/4} k\ge \alpha^{1/2}nk$  edges between $A'_1$ and $A'_2$, and hence the average degree of  $(A'_1,A'_2)$ is at least $2\alpha^{1/2} k$. By Lemma~\ref{folklore1}, the bipartite graph between $A'_1$ and $A'_2$ has a subgraph $(A_1,A_2)$ of minimum degree at least $\alpha^{1/2}k$, which is as desired for~(b).
\end{proof}

The last lemma of this section provides two vertices whose degree  sum is large.

\begin{lemma}\label{2+alpha}
Let
$0<\eps\ll\rho\ll \alpha<1$, $k,n\in \mathbb N$ with $k\ge \alpha^{1/10}n$ and  $\alpha^{1/50}\le \frac\mu{100}$  where $\mu$ is the  
constant from Theorem~\ref{thm:realmain}.
 Let $R$ be an induced subgraph of an $(\eps, \rho)$-reduced graph on clusters of size $m$ of  a robust $n$-vertex graph $G$  such that $k-2< d(G)\le k$,
 and $d(R)\ge (1-\alpha)\kappa$ 
 where $\kappa=\frac{k}{m}$. Then
at least one of the following holds:
\begin{enumerate}
    \item[(a)]  There are adjacent  $A, B\in V(R)$ such that $d(A)+d(B)\ge (2+2\alpha)\kappa$, or
    \item[(b)] $G$ contains each  $k$-vertex tree.
\end{enumerate}
\end{lemma}

\begin{proof}
Assume (a)  fails. 
Set $e_R^w:=\sum_{XY\in E(R)}d(X,Y)$
 and use the Cauchy-Schwarz type equality  $ ( \sum_{i\in[\ell]} a_i   )^2 = \ell\sum_{i\in[\ell]}a_i^2   - \frac{1}{2} \sum_{i,j\in[\ell]}\big(a_i - a_j )^2$  to calculate
 \vskip-.2cm
\begin{align*}
(2+2\alpha)\kappa \cdot e_R^w & \ > \sum_{XY\in E(R)}d(X,Y)(d(X)+d(Y))
\\ & 
= \sum_{X\in V(R)}(d(X))^2 \\
& = \frac 1{|V(R)|}\Big( \big(\sum_{X\in V(R)}d(X)\big)^2   +  \frac 12\sum_{X,Y\in V(R)}\big(d(X)-d(Y)\big)^2 \Big)\\ 
& = \frac{|V(R)|d(R)\cdot 2e_R^w}{|V(R)|}   +  \frac 1{2|V(R)|}\sum_{X,Y\in V(R)}\big(d(X)-d(Y)\big)^2  \\ 
& \ge 2(1-\alpha)\kappa \cdot e_R^w  +  \frac 1{2|V(R)|}\sum_{X,Y\in V(R)}\big(d(X)-d(Y)\big)^2.
\end{align*}
 So, 
\begin{equation}\label{sqarediffs}
    \sum_{X,Y\in V(R)}\big(d(X)-d(Y)\big)^2 <8\alpha\kappa |V(R)|\cdot e_R^w\le 16\alpha^{9/10}\kappa^3|V(R)|,
\end{equation}
where in the last inequality we use that $e_R^w\le (1+\alpha)\kappa |V(R)|\le\frac{2\kappa^2}{\alpha^{1/10}}$
 which holds since the robustness of $G$ implies that $G':=G[\bigcup V(R)]\subseteq G$ has 
  average degree at most $(1+\alpha^2)k$. 
Now, 
if (b) fails, then  Lemma~\ref{mayaslemma1}  gives a set of $\alpha^{1/4} |V(G')|$ vertices of~$G'$ of degree at least $(1+\alpha ^{1/4})k$. Since moreover, a standard calculation gives that almost all vertices of a cluster have roughly the same degree, this means that there is a set $L$ of at least $\alpha^{11/40}|V(R)|$ clusters with $d_R(W)\ge (1+\alpha^{11/40})\kappa$ for each $W\in L$. As we assume $(a)$ fails,  each of the at least $(1+\alpha^{11/40})\kappa\ge\kappa$ clusters of $N(L)$ has degree below $(1-\alpha^{11/40}+2\alpha)\kappa$.~Hence
\begin{align*}
    \sum_{X\in L, Y\in N(L)}\big (d(X)-d(Y)\big)^2
     \ge (2\alpha^{11/40}-2\alpha)^2\kappa^2 \cdot \alpha^{11/40}|V(R)| \cdot \kappa 
\ge \alpha^{33/40}\kappa^3|V(R)|,
\end{align*}
a contradiction to~\eqref{sqarediffs}.
\end{proof}

 \subsection{$f$-matchings}\label{sec:match}

As usual, we call a matching  of a graph $H$   {\it perfect} if its edges cover all vertices of~$H$, and {\it near perfect} if its edges cover all but one vertex of $H$. If for each $v\in V(H)$, the graph $H-v$ has a perfect matching, then   $H$ is {\it factor-critical}.
We will use the following famous theorem:

\begin{theorem}[Gallai-Edmonds (see~\cite{LovaszPlummer})]\label{GEthm}
The vertex set of any graph $H$ can be partitioned into three sets $X$, $Y$ and $W$ such that 
\begin{enumerate}
\item[(I)] there are no edges between $Y$ and $W$,
    \item[(II)] each component of $H[W]$ has a perfect matching,
    \item[(III)] every component  of $H[Y]$  is factor-critical,
    \item[(IV)] every maximum  matching of 
$H$  consists of  near-perfect matchings of each component of $H[Y]$, a 
 perfect matching of $H[W]$, and a matching of all of $X$ to~$Y$.
\end{enumerate}
\end{theorem}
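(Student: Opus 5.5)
We would follow the classical route via the Gallai--Edmonds decomposition. Let $\nu(H)$ denote the size of a maximum matching, let $Y$ be the set of vertices missed by at least one maximum matching, let $X=N(Y)\setminus Y$, and let $W=V(H)\setminus(X\cup Y)$. Property (I) then holds by definition. The remaining work is to establish the structural claims (II)--(IV), which we would do by reducing to the case $X=\es$.

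The first key step is the \emph{stability lemma}: for every $a\in X$ we have $\nu(H-a)=\nu(H)-1$, and the decomposition of $H-a$ is $(X\setminus\{a\},Y,W)$. To prove it, take a maximum matching $M$ and a vertex $y\in Y\cap N(a)$. Choose a maximum matching $M'$ missing $y$, and consider the alternating paths in $M\triangle M'$. These show three things: $a$ is covered by every maximum matching (otherwise $a\in Y$); some maximum matching contains an edge $ay'$ with $y'\in Y$; and deleting $a$ destroys no vertex's membership in $Y$ and creates no new such membership. We then apply this lemma repeatedly to delete all of $X$. This gives $\nu(H-X)=\nu(H)-|X|$ and leaves the same sets $Y$ and $W$. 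In $H-X$ there are no $Y$--$W$ edges, so the components of $H[Y]$ and of $H[W]$ are components of $H-X$.

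In $H-X$, every vertex of $W$ is covered by every maximum matching. It follows that $H[W]$ has a perfect matching, which gives (II); for components of $H[W]$ this is a simple parity and alternating-path check. Every component $K$ of $H[Y]$ is connected, and each of its vertices is missed by some maximum matching of $K$. We then invoke \emph{Gallai's lemma}: a connected graph in which every vertex is missed by some maximum matching is factor-critical. This gives (III).

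We expect Gallai's lemma to be the main obstacle. We would prove it by contradiction. Suppose a maximum matching $M$ misses two vertices $u,v$ of $K$, chosen with $\operatorname{dist}(u,v)$ minimal. This distance is at least $2$, since otherwise we could augment $M$. Let $w$ be an interior vertex of a shortest $u$--$v$ path, and let $M_w$ be a maximum matching missing $w$. By minimality, $M$ covers $w$ and $M_w$ covers both $u$ and $v$. Analysing the components of $M\triangle M_w$ that start at $w$, $u$ and $v$ then yields a maximum matching missing $w$ together with one of $u,v$, which contradicts minimality.

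Finally, for (IV) we count. Let $c$ be the number of components of $H[Y]$. The steps above give
\[
\nu(H)=\tfrac12\bigl(|V(H)|-c+|X|\bigr).
\]
Now let $M$ be any matching. Each odd component of $H-X$ leaves at least one vertex that is either unmatched or matched into $X$, and at most $|X|$ such vertices can be matched into $X$; this gives $|M|\le\tfrac12(|V(H)|-c+|X|)$. Equality in this bound forces every inequality in the count to be tight. That is, each component of $H[Y]$ is matched near-perfectly with exactly one vertex sent to $X$ or left exposed, all of $X$ is matched into distinct components of $Y$, and $H[W]$ is perfectly matched internally. This is exactly (IV).
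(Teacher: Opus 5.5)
Your proposal is correct and is essentially the classical proof (stability lemma, then Gallai's lemma for (III), then Tutte--Berge counting for (IV)) from Lov\'asz--Plummer, which the paper cites without proof. The one step you leave thin is the harder half of the stability lemma, namely that deleting $a\in X$ makes no new vertex missable. This is done via alternating paths, comparing a maximum matching of $H-a$ that misses the vertex in question with a maximum matching of $H$ that misses a neighbour $y\in Y$ of $a$.
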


An {\it $f$-matching}  in a graph $H$ is a multiset $M$ of edges so that each vertex of $H$ is contained in at most $f$ edges of $M$. A vertex is {\it covered} by $M$ if it is contained in exactly $f$ edges. It is {\it touched} by $M$ if it is in at least one edge of $M$. If all vertices of $H$ are covered, then $M$ is called {\it perfect}. Note that every (usual) matching is a $1$-matching. We write $V(M)$ for the set of all vertices touched by~$M$.

An {\it $M$-alternating path} for an $f$-matching $M$ is a path $P$ such that 
if $k\le |E(P)|$ is even, then the $k$th 
edge on $P$ has  multiplicity at least $1$ in $M$. 
Consider  an $M$-alternating path $P=v_0v_1\ldots v_{2\ell-1}v_{2\ell}$  on an even number of edges,  starting at a vertex that is not covered by $M$. 
Obtain $M'$ from $M$ by  discarding from $M$ one copy of each edge  $v_{2i+1}v_{2i+2}$  and adding 
one copy of each edge  $v_{2i}v_{2i+1}$ to the multiset, for $i=0,\ldots \ell-1$. Clearly,  also $M'$ is an $f$-matching of size $|M|$.

\begin{lemma}[Tutte~\cite{Tut}]\label{factorcrit}
    Every factor-critical graph $G$ has a perfect $2$-matching.
\end{lemma}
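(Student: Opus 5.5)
The plan is to reduce to the connected case and then build the $2$-matching by splitting at one vertex. We may assume $G$ is connected: a factor-critical graph with at least two components fails, since deleting a vertex from one component leaves every other component needing a perfect matching, and then deleting a vertex from a second component gives a contradiction by parity. Then every vertex count $|V(G)|$ is odd, because $G-v$ has a perfect matching.

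Next we produce an odd cycle. Fix a vertex $v$ and a perfect matching $M_v$ of $G-v$. If $G$ is a single vertex, the statement is trivial; a perfect $2$-matching of $K_1$ is impossible, so factor-critical graphs are taken to have at least one edge, or we adopt the convention that isolated vertices are excluded. Otherwise pick a neighbour $u$ of $v$ and a perfect matching $M_u$ of $G-u$. The symmetric difference $M_u\triangle M_v$ consists of alternating cycles and paths. The vertices $u$ and $v$ each have degree exactly one in it, since $u$ is covered by $M_v$ but not by $M_u$, and vice versa for $v$. So they are the endpoints of a single even-length alternating path $P$ from $u$ to $v$; its length is even because it starts with an $M_v$-edge at $u$ and ends with an $M_u$-edge at $v$. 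Adding the edge $uv$ closes $P$ into an odd cycle $C$ through $u$ and $v$.

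Now we assemble the perfect $2$-matching:
\begin{itemize}
\item take each edge of $C$ with multiplicity one, so every vertex of $C$ receives degree $2$;
\item the edges of $M_v$ not touching $V(C)$ form a perfect matching of $G-V(C)$, and we take each of these with multiplicity two.
\end{itemize}
The second step works because every vertex of $P$ other than $v$ is matched by $M_v$ to a neighbour along $P$, so $M_v$ restricted to $V(G)\setminus V(C)$ is a perfect matching there. The result is a multiset of edges in which every vertex lies in exactly two edges, which is a perfect $2$-matching.

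The main obstacle is verifying that $M_v$ really matches $V(G)\setminus V(C)$ inside itself. The worry is whether some $M_v$-edge joins a vertex outside $C$ to $P$. It cannot, because every interior vertex of $P$ and $u$ itself are matched by $M_v$ along $P$, while $v$ is unmatched by $M_v$.
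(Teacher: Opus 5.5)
Your proof is correct. It differs from the paper only in that the paper gives no argument at all: it states the lemma with a citation to Tutte and moves on.

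You replace the citation by a self-contained construction. For an edge $uv$, the symmetric difference of a perfect matching $M_u$ of $G-u$ and a perfect matching $M_v$ of $G-v$ contains an alternating $u$--$v$ path $P$ of even length. The edge $uv$ lies in neither matching, so it closes $P$ into an odd cycle $C$. Since $M_v$ pairs up $V(P)\setminus\{v\}$ along $P$ and misses $v$, the remaining edges of $M_v$ form a perfect matching of $G-V(C)$. Taking $C$ once and those edges twice gives a perfect $2$-matching in exactly the multiset sense defined in Section~\ref{sec:match}.

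The citation route would go through Tutte's theory. One version is his criterion that a perfect $2$-matching exists if and only if $|N(I)|\ge |I|$ for every stable set $I$. A factor-critical graph without isolated vertices satisfies this: for any $w\in N(I)$, a perfect matching of $G-w$ matches $I$ injectively into $N(I)\setminus\{w\}$. That route is shorter on the page but imports a characterization theorem. Your argument is elementary and constructive, and it gives extra structure: a single odd cycle through any prescribed edge plus a doubled matching. Only existence is needed in the proof of Lemma~\ref{fracmatchlem}.

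One thing to clean up: your sentence about $K_1$ contradicts itself (``trivial'' versus ``impossible''). The accurate statement is that $K_1$ is factor-critical, since the empty matching is a perfect matching of $K_1-v$, but it has no perfect $2$-matching. So the lemma as literally stated needs $|V(G)|\ge 2$. This is harmless for the paper, because the lemma is only applied to components of $H[Y\setminus S]$, and all singleton components were placed in $S$. State that hypothesis explicitly rather than calling the case trivial.
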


Let $d_M(v)$ denote the number of edges of $M$ that contain $v$.
The next  lemma is crucial for our embedding strategy later.

\begin{lemma}\label{fracmatchlem}
Let $H$ be a graph, let $f\in\mathbb N$ be  even, and let $w:V(H)\to\{0,2,4,\dots,f\}$.
Then there are a stable set $Z\subseteq V(H)$ and an $f$-matching $M$ of $H$
such that
\begin{enumerate}
\item[(i)] $d_M(v)\ge w(v)$ for every $v\in V(H)\setminus Z$,
\item[(ii)]\label{lemfracii} $d_M(v)\le w(v)$ for every $v\in Z$,
\item[(iii)] $d_M(v)=f$ for every $v\in N(Z)$ and 
\item[(iv)]\label{lemfraciv} if $uv\in M$ and $u\in N(Z)$ then $v\in Z$.
\end{enumerate}
\end{lemma}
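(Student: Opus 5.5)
The plan is an extremal choice of $M$ together with an alternating-path (Hungarian-tree) argument. Evenness of $f$ and of $w$ is used to get rid of the parity obstacles (blossoms) that would otherwise appear in a Gallai--Edmonds type structure.

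\textbf{Extremal choice.} Among all $f$-matchings of $H$, choose $M$ maximising the potential
$$\Phi(M)=\sum_{v\in V(H)}\min\{d_M(v),w(v)\},$$
and subject to this maximise $|M|$. Call $v$ \emph{deficient} if $d_M(v)<w(v)$, and let $D$ be the set of deficient vertices. If $D=\emptyset$, take $Z=\emptyset$; then (i) holds and (ii)--(iv) are vacuous. Otherwise, grow alternating structures from $D$:
\begin{itemize}[label={}]
\item $Z$ is the set of vertices reachable from $D$ by an $M$-alternating path of even length (non-$M$ edge first, then an edge of $M$, and so on);
\item $O$ is the set of vertices reachable by such a path of odd length.
\end{itemize}

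\textbf{Properties (ii)--(iv) from improvement arguments.} Each property is checked by showing that its failure yields an $M'$ with larger $\Phi$, using the switching operation described before Lemma~\ref{factorcrit}.
\begin{itemize}[label={}]
\item (ii): If some $z\in Z$ had $d_M(z)>w(z)$, switch along the even alternating path from a deficient vertex to $z$. This raises the deficient end by one and lowers $z$ by one while keeping $z\ge w(z)$, so $\Phi$ increases.
\item (iii), $d_M=f$ on $N(Z)$: If $u\in N(Z)$ had $d_M(u)<f$, add the edge $zu$ and shift along the path to $z$. Either $u$ is deficient, or $\Phi$ is unchanged and $|M|$ grows; both contradict the choice of $M$.
\item (iv): If $u\in N(Z)$ had an $M$-edge $uv$ with $v\notin Z$, then $v$ would be even-reachable, so $v\in Z$ by definition.
\end{itemize}

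\textbf{Property (i) and the main obstacle.} Property (i) for $v\notin Z$ is immediate, since all deficient vertices lie in $Z$. The real difficulty is that $Z$ must be \emph{stable} and that $N(Z)=O$. An edge inside $Z$, or a vertex reachable both evenly and oddly, corresponds to an odd alternating cycle (a blossom). In ordinary matching theory this genuinely happens, and the Gallai--Edmonds Theorem~\ref{GEthm} then only gives factor-critical components rather than a stable set.

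Here I plan to use parity. Suppose an edge $zz'$ joins two even-reachable vertices. Then there is a walk from deficient vertices through $zz'$ back to deficient vertices, possibly to the same deficient vertex $x$. Perform the switch along this walk with \emph{half}-steps: add one copy of each non-$M$ edge and remove one copy of each $M$-edge, traversing the odd cycle once in each direction. Since every multiplicity change occurs in pairs around the cycle, as in the proof that factor-critical graphs have perfect $2$-matchings (Lemma~\ref{factorcrit}), the net effect is that $x$ gains $2$ and all other degrees are unchanged. This is legitimate because $w(x)\le f$ is even and $d_M(x)<w(x)$, so $d_M(x)\le w(x)-2$ once we also know $d_M(x)$ is even.

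To guarantee that all degrees are even, I would first restrict to \emph{even} $f$-matchings, meaning $M=2M_0$ for an $\tfrac f2$-matching $M_0$ on the bipartite double cover $H\times K_2$, projected back to $H$. All the switches above are then done in the bipartite double cover, where no blossoms exist, and the projection is symmetrised. Checking that this symmetrisation keeps $Z$ stable in $H$ is the step I expect to be the main obstacle. My first attempt would be to define $Z$ as the set of $v$ with both copies even-reachable in the double cover, and verify (i)--(iv) directly from König-type alternating-tree properties there.
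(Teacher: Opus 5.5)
Your alternating-tree argument is only valid in a bipartite graph, and the proposal never gets there. In $H$ itself, your derivations of (iii) and (iv) need $Z$ stable and $N(Z)=O$, and blossoms destroy both, as you note yourself.

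The parity repair does not work as stated. Suppose ``even'' means $M=2M_1$ for an $\frac f2$-matching $M_1$ of $H$. Then no such $M$ can satisfy the lemma in general. Take $H=K_3$, $f=2$, $w\equiv 2$. A stable $Z$ has at most one vertex. If $Z=\{c\}$, then (iii) and (iv) force $ac$ and $bc$ to have multiplicity $2$, so $d_M(c)=4>f$. Hence $Z=\emptyset$, and then $M$ must be the triangle with every edge used once, which is not of the form $2M_1$. Your blossom switch also leaves the class of even $f$-matchings immediately. The right object is $M=\pi(N)$, the projection of an \emph{arbitrary} $\frac f2$-matching $N$ of $H\times K_2$. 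Choose $N$ to maximise $\sum_x\min\{d_N(x),w(x)/2\}$, with both copies of $v$ getting target $w(v)/2$; this is exactly where evenness of $f$ and $w$ enters. Note that $\pi(N)$ need not have even degrees.

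In the cover, your improvement arguments do work. For the even-reachable set $Z_2$ and the odd-reachable set $O_2$ they give: $Z_2\cap O_2=\emptyset$; $N(Z_2)=O_2$; $O_2$ is saturated; $N$-edges at $O_2$ end in $Z_2$; $d_N\le w/2$ on $Z_2$; and every deficient copy lies in $Z_2$. Now set $Z=\{v: v_1,v_2\in Z_2\}$. Stability, (ii) and (iii) are then immediate, so stability is not the real obstacle.

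What you cannot yet derive is (i) and (iv). Nothing in your argument rules out a vertex with one deficient copy and its other copy outside $Z_2$. Such a vertex lies outside $Z$ but can have $d_M(v)<w(v)$, breaking (i). Likewise, an $M$-edge $uv$ with $u\in N(Z)$ coming from $u_1v_2\in N$ only yields $v_2\in Z_2$, not $v\in Z$, breaking (iv).

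The missing idea is that $Z_2$ is invariant under the swap $v_1\leftrightarrow v_2$. This is true but needs proof. One way is to show that $Z_2$ equals the set of copies that are deficient in \emph{some} optimal $N'$:
\begin{itemize}
\item One inclusion follows by switching along an even alternating path.
\item For the other, follow from $z$ an alternating trail in $N\triangle N'$ (edges where $N$ exceeds $N'$ alternating with edges where $N'$ exceeds $N$). Shortcut it to a path using bipartiteness, and use optimality of $N'$ to show that its other end is $N$-deficient.
\end{itemize}
Hence $Z_2$ does not depend on the choice of optimal $N$. Since the swap maps optimal matchings to optimal matchings, $Z_2$ is fixed by the swap.

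With this lemma your double-cover route becomes a valid alternative proof. The paper removes blossoms differently, via Gallai--Edmonds. It restricts $Z$ to singleton components of $H[Y]$ and solves a bipartite $\frac f2$-matching problem between $X$ and the contracted rest, then doubles it. Finally it fills the factor-critical components and $W$ up to degree $f$ using perfect matchings of $C-v$ and Tutte's perfect $2$-matchings.
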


\begin{proof}
   Let  $X, Y, W$ be given by Theorem~\ref{GEthm} for $H$. 
   Let $S\subseteq Y$ be the set of all singleton components of $H-X$. Delete $W$ from $H$, and contract each component of $H[Y\setminus S]$ to a single vertex $y$ (omitting repeated edges).  Call the new graph~$H'$, and let~$Y'$ be the set of all vertices of $H'$ that arose from contractions.  By Theorem~\ref{GEthm}~(IV), the graph $H'$ has a matching $M_0$ that covers all vertices of $X$ and does not use any interior edge of $X$. By taking $\frac f2$ copies of each edge of $M_0$, we obtain an $\frac f2$-matching~$M_1$ of~$H'$ that covers~$X$. Hence, as $H'-X$ is stable, any maximum $\frac f2$-matching of $H'$  covers~$X$.
In particular, this holds for the matching 
   $M_2$, chosen among all maximum $\frac f2$-matchings of $H'$ that avoid interior edges of $X$ such that $$\sum_{v\in S}\min \{d_{M_2}(v), \frac{w(v)}2\}$$ is maximised. Let $Z'$ be the set of all $v\in S$ with $d_{M_2}(v)< \frac{w(v)}2$, and let $Z$ be the set of all $v\in S\cup Y'$ that are endpoints of an $M_2$-alternating path starting in a vertex of~$Z'$. Then by our choice of $M_2$, we know that $Z\subseteq S$, and moreover, if $v\in Z$ then $d_{M_2}(v)\le \frac{w(v)}2$. (Indeed, it is easy to see that otherwise we could use the corresponding alternating path to change $M_2$ to a `better' matching.)

    We obtain $M_2'$ from $M_2$ by replacing each edge of $M_2$ with endpoints $x\in X$, $y\in Y'$ with any   edge from $x$ to the corresponding component of $H[Y]$. We obtain $M_3$   from  $M_2'$ by taking two copies of each edge.  Then $M_3$ is an 
     $f$-matching  in $H$. 
    Furthermore, by construction,   $M_3$ and the stable set $Z$ satisfy (ii)-(iv) in $H$, and satisfy (i) for all vertices of $(S\setminus Z)\cup X$.
    
    We will now extend $M_3$ to an $f$-matching $M_4$ in $H$ by adding some edges of $H-S-N(Z)$, in order to ensure (i) for all vertices outside~$Z$. This will clearly not affect the validity of (i) for vertices of $(S\setminus Z)\cup X$, and of (ii)-(iv). To determine the edges we add to $M_3$, we consider each component~$C$ of $H[(Y\setminus S)\cup W]$ separately, and add a matching $M_C$ corresponding to that component. 
    Let $V_C$ be the (possibly empty) set  of vertices of $C$ that are touched by $M_3$. By construction, each $v\in V_C$ lies in an even number $x_v$ of edges from $M_3$, and $x:=\sum_{v\in V_C}x_v\le f$. We  obtain $M_C$  as follows: For each $v\in V_C$,   take $x_v$ copies of the perfect matching of $C-v$. The union of these $x$-matchings is $M_C'$. Note that  $d_{M_C'}(v)=x$ for each $v\in V(C)\setminus V_C$ and  $d_{M_C'}(v)=x-x_v$ for each $v\in V_C$. Also,  $x$ is even by construction. We obtain~$M_C$ by adding to $M'_C$  $(f-x)/2$ copies of a $2$-matching of $C$, which is given by  Lemma~\ref{factorcrit} if  $C\subseteq Y$, or by doubling the matching from Theorem~\ref{GEthm}~(II) if $C\subseteq W$.
    We add to $M_3$ the union of all $M_C$ where $C$ is a component of $H[(Y\setminus  S)\cup W]$. This choice~clearly~satisfies~(i)-(iv).
\end{proof}

\subsection{Embedding into  an f-matching}\label{sec:emblem}

In the proof we will use 
an $f$-matching found in the reduced graph for our embedding. Namely, we will embed  each of the small trees of an $\alpha$-decomposition of $T$ (for some small $\alpha$) into the endclusters of $f$-matching edges. In order to be able to use these endclusters evenly for all $f$-matching edges, we will always make sure to use at most $|W|/f$ vertices of each such cluster $W$ for embeddings using an edge $e$ that contains~$W$.

Further, we will often try to maintain the set of used vertices {\it balanced} for as long as possible, in a sense that will be made more precise below, but roughly means that while we embed using the edge $e$, about the same amount is embedded in each of its endpoints $W,W'$. This is advantageous for us, as in this way we will be able to fit more of the small trees into an edge. 

Let $\beta\ge 0$, and let $M$ be an $f$-matching on  $\mathcal V$, where each $W\in\mathcal V$ is a set of vertices with $|W|=m$. 
 We say  $U\subseteq \bigcup\mathcal V$ is {\it $(M,\beta)$-balanced with associated sets of nonnegative integers 
  $\{u_e^W\}_{e\in M, W\in e}$ and $\{u_W\}_{W\in\mathcal V}$} if
\begin{enumerate}
    \item $|u_e^W-u_e^{W'}|\le\beta m$ and $u^W_e, u^{W'}_e\le \frac mf$ for each $e\in M $ and $W, W'\in e$,
    \item $u_W\le (1-\frac{d_M(W)}f)m$ for each $W\in\mathcal V$, and
    \item $|W\cap U|=u_W+\sum_{e\in M, W\in e}u^W_e $ for each $W\in\mathcal V$.
\end{enumerate}
If furthermore,  $u_W=0$ for all $W\in\mathcal V$, then we say $U$ is  {\it fully $(M,\beta)$-balanced}. In this case, and if moreover, there is a set $\W\subseteq\V$ such that each $e\in M$ has exactly one endpoint in $\W$, and such that
$u_e^W\le u_e^{W'}$  for each $e\in M $ and $W, W'\in e$ with $W\in e\cap\W$,
then we say $U$ is fully $(M, \beta, \W)$-balanced. 
Note that 
for $M'\subseteq M$, an $(M, \beta)$-balanced set is also $(M', \beta)$-balanced, with associated integers $\hat u_e^W=u_e^W$ and $\hat u_W=u_W+\sum_{e\in M\setminus M', e\ni W}u_e^W$.
 
 The next lemma is meant to be used when we already have embedded parts of the tree in a balanced way.
\begin{lemma}\label{lem:T3}
Let $0<\beta\ll\varepsilon $, and let $m, f\in\mathbb N$ with $\eps\ll\frac 1{100f}$. Let $\mathcal V$ be the set of clusters of an
$(\varepsilon,5\varepsilon^{1/5})$-regular partition of
an $n$-vertex graph $G$, with $|V|=m$ for every
$V\in\mathcal V$.
  Let~$M$ be an $f$-matching of a reduced graph on $\mathcal V$. Let $Q\in\mathcal V$, and let $U\subseteq\bigcup\mathcal V$ be  $(M,\beta)$-balanced with associated sets $\{u_e^W\}_{e\in M, W\in e}$ and $\{u_W\}_{W\in\mathcal V}$. 
Let~$\mathcal T$ be a family of 
 trees on at most $\beta m$ vertices each,  such that 
 \begin{enumerate}
    \item[$(\ast)$] $ |\bigcup\mathcal T|\le \hskip-.4cm\displaystyle\sum_{\substack{W\in V(M) \\ d(Q, W)m\ge |W\cap U|}} 
    \hskip-.5cm
    \Big(
    \min\Big\{d(Q, W)m,\frac{d_M(W)}f m +u_W \Big\}-|W\cap U|\Big)
    -\epsilon^{1/15}n.$
\end{enumerate} 
 For $X\in\mathcal T$, let $Y_X$ consist of the root~$r_X$  and at most one more vertex of $V(X)$, lying at even distance at least four from  $r_X$. Let $Y=\bigcup_{X\in\mathcal T}Y_X$. Let $\psi:Y\to Q$ be  such that each $q\in\psi(Y)$ is typical to all but at most $2\sqrt\eps|\mathcal V|$  clusters of $\V$. 
 Then  there is an embedding $\varphi$ of  $\bigcup \mathcal T$   in $\bigcup V(M)\setminus U$ such that 
 \begin{enumerate}
     \item[(I)] $\varphi(y)\in N(\psi(y))$  for each  $y\in Y$  and 
     \item[(II)]
 $|\varphi(\uT)\cap W|+\sum_{e\in M, e\ni W}u^W_e\le d_{M}(W)\frac mf$ for each~$W\in V(M)$. 
 \end{enumerate} 
\end{lemma}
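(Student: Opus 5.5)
We embed the trees of $\mathcal T$ one at a time, greedily, and keep a running set $U'\supseteq U$ of used vertices that stays $(M,\beta')$-balanced for a slightly larger $\beta'$ (still $\ll\eps$). Each new tree $X$ is placed into a single edge $e=WW'\in M$ with $W$ a cluster adjacent to $Q$. The root $r_X$, and the second vertex of $Y_X$ if there is one, are sent into $W\cap N(\psi(\cdot))$. The rest of $X$ goes into the unused parts of $W$ and $W'$, using Lemma~\ref{lem:T1}.

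The key bookkeeping is a \emph{capacity} for each cluster $W\in V(M)$,
\[ c_W:=\min\Big\{d(Q,W)m,\ \tfrac{d_M(W)}f m+u_W\Big\}-|W\cap U|, \]
and we only use clusters with $c_W>0$. The hypothesis $(\ast)$ says that the total capacity exceeds $|\bigcup\mathcal T|$ by $\eps^{1/15}n$. Suppose a tree $X$ is still unembedded. Then the capacity used so far is less than the total capacity minus $\eps^{1/15}n$. So some cluster $W$ still has at least $\eps^{1/15}n/|\V|\gg\sqrt\eps m$ unused capacity. We also want $\psi(r_X)$ typical to $W$. By hypothesis $\psi(r_X)$ is typical to all but $2\sqrt\eps|\V|$ clusters, and these exceptional clusters carry total capacity at most $2\sqrt\eps|\V| m\ll\eps^{1/15}n$. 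Hence we may choose $W$ among the clusters to which $\psi(y)$ is typical, for both $y\in Y_X$.

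For such a $W$, the vertex $\psi(y)$ has about $d(Q,W)m$ neighbours in $W$. The vertices of $W$ used so far number at most $d(Q,W)m-(\text{slack})$. So $\psi(y)$ still sees $\gg\eps^{1/10}m$ unused vertices of $W$. Among its edges $e\ni W$ in $M$, we choose one whose budget $m/f$ on the $W$ side is not yet exhausted; such an edge exists since $\sum_e u_e^W\le d_M(W)m/f-(\text{slack})$. Now apply Lemma~\ref{lem:T1} to the pair $(W',W)$ with a pseudo-root. Concretely, first pick $\varphi(r_X)\in N(\psi(r_X))\cap W$ typical to the unused part of $W'$ inside $e$'s share, then embed $X$ from $\varphi(r_X)$. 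The second vertex of $Y_X$, at even distance $\ge4$, lands in $W$ by the "furthermore" clause of Lemma~\ref{lem:T1}, at a vertex of $N(\psi(y))\cap W$ typical to $W'$. This gives (I).

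To keep things balanced, we split each tree's two colour classes in the standard way. A tree of size $\le\beta m$ shifts $|u_e^W-u_e^{W'}|$ by at most $\beta m$. Whenever the $W$ side of $e$ is ahead, we choose the orientation so that the next tree placed in $e$ puts its larger class on the lagging side; this is possible because the roots can go to either endpoint, provided the capacity argument is run symmetrically on $W'$ as well. Alternatively, we count the capacity in $W$ through $d(Q,W)$ only, and let $W'$ absorb at most what the $W'$ budget allows. Then (II) holds because we never exceed $u^W_e\le m/f$ for any edge, giving $\sum_e(\text{used in }e)\le d_M(W)m/f$.

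The main obstacle is to reconcile (II) and the balancing with the one-sided capacity in $(\ast)$. The capacity $(\ast)$ only counts room in clusters $W$ seen by $Q$, but the trees also occupy the partner clusters $W'$. We must show that partner clusters never run out before $W$ does. I would first try to handle this by always keeping $u_e^{W'}\le u_e^W+\beta m$ (the balance condition), so that filling $W$'s share of $e$ up to $m/f$ fills $W'$'s share by at most $m/f+\beta m$. The $\beta m$ overshoot per edge is then absorbed into the $\eps^{1/15}n$ slack, together with the typicality losses of order $\sqrt\eps m$ per cluster.
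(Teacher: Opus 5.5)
Your proposal has a genuine gap, and it is exactly the step you flag as the main obstacle. The fix you suggest does not work. Your capacity count only yields a typical cluster $W$ with spare capacity and an edge $e=WW'\in M$ with room on the $W$-side. To run Lemma~\ref{lem:T1} in $e$ you also need room on the $W'$-side, and you get it from balance. But balancing by orientation requires that the root can be placed in either endpoint. This fails as soon as $W'$ has no spare $Q$-capacity (e.g.\ $d(Q,W')=0$, or $W'$ already holds about $d(Q,W')m$ used vertices), or $\psi(r_X)$ is atypical to $W'$. Then the root is forced into $W$. If the trees are, say, stars rooted at their centres, each one puts a single vertex into $W$ and everything else into $W'$. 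No rule you describe then maintains $|u_e^W-u_e^{W'}|\le\beta' m$, nor the one-sided invariant $u_e^{W'}\le u_e^W+\beta m$. The statement you say you must show, that partner clusters never run out before $W$ does, is simply false here. Nor can this be repaired by local bookkeeping alone. If you unbalance an edge whose two endpoints are both still counted in $(\ast)$, or pad it with dummy vertices to restore balance, you can waste up to half of that edge's counted capacity, which $(\ast)$ does not allow.

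The missing idea is to separate two regimes, as the paper does.

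\emph{First phase.} Embed only into edges both of whose endpoints can currently receive the roots: each $\psi(y)$ is typical to them and has $\eps^{1/10}m$ unused neighbours there, and the edge's budget has room. Choose the root side so that the used set stays $(M,\beta)$-balanced. This wastes nothing, and an edge that fills up is nearly full on both sides.

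\emph{Transition.} When this is no longer possible, every open edge outside a set of $O(\sqrt\eps f|\V|)$ atypical edges has an endpoint $W_e$ with $d(Q,W_e)m-|W_e\cap U'|<2\eps^{1/10}m$. This exhaustion is permanent. So, up to an error absorbed by the slack, the capacity in $(\ast)$ now lives only on the other endpoints $W'_e$; this is \eqref{toM}.

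\emph{Second phase.} Put the roots into $W'_e$. After embedding each tree $X$, add $\max\{|X\cap C|,|X\cap D|\}-\min\{|X\cap C|,|X\cap D|\}$ unused dummy vertices to the lighter endpoint of $e$. This keeps the used set balanced, so both sides of $e$ have room simultaneously. The cost is at most $\max\{|X\cap C|,|X\cap D|\}\le|X|$ of counted capacity per tree, which is exactly what the restricted bound pays for, because $W_e$ no longer counts.

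Instead of padding, you could do a per-edge count. It would show that an edge whose $W_e$-side filled first has still absorbed at least the counted capacity of $W'_e$ along $e$. But this too relies on the edge having been balanced when $W_e$ dropped out, i.e.\ on the same two-phase structure.
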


\begin{proof}
   In a first stage, we  embed a growing subset $\mathcal X$ of $\T$  as follows, starting with $\mathcal X=\es$. Let  $\varphi$ denote the embedding and set $U_{\mathcal X}=U\cup\varphi(\bigcup\mathcal X)$.
   If there are an  $X\in\T\setminus\mathcal X$ and an edge $e\in M$ with endpoints $W_1,W_2$ such that for each~$j=1,2$,
    \begin{enumerate}
    \item[(i)]   $v$ is typical to  $W_j$ and   $|N(v)\cap W_j\setminus U_{\mathcal X}|\ge \eps^{1/10}m$ for each $v\in \psi(Y_X)$, and 
        \item[(ii)] vertices of 
        previous trees using $e$ occupy at most $\frac mf-u_e^{W_j}-\eps^{1/10}m$ vertices of~$W_j$,
    \end{enumerate} 
   then we add $X$ to $\mathcal X$ and use Lemma~\ref{lem:T1} to embed $X$ in $(W_1\cup W_2)\setminus U_{\mathcal X}$, in a way that~(I) holds for all~$y\in Y_X$, and (II) holds for   $\varphi(\bigcup\mathcal X)$. By~$(i)$,
   we can choose to which of $W_1, W_2$ the root of $X$ goes, and thus are able to  maintain  $U_{\mathcal X}$  $(M,\beta)$-balanced throughout this stage.
   Once there is no suitable pair $X, e$, we stop embedding and set~$\T':=\mathcal X$ and $U':=U_{\mathcal X}$. We assume $\T'\neq\T$, as otherwise we are done.

Obtain $M^*$ from $M$ by discarding all edges that have an endpoint~$W_j$ failing~$(ii)$.  
 As  $U'$ is $(M,\beta)$-balanced, it is also $(M^*,\beta)$-balanced, say with associated sets $\{\hat u_W\}_{W\in\mathcal V}$ and 
  $\{\hat u_e^W\}_{e\in M^*, W\in e}$. If one endpoint $W_j$ of an edge fails $(ii)$ then also the other endpoint $W_{3-j}$  is close to failing  $(ii)$: Namely, vertices from trees 
        $X\in\T'$ that used~$e$ occupy at least $\frac mf-u_e^{W_{3-j}}-\eps^{1/10}m-\beta m$ vertices of $W_{3-j}$. Hence for each $e\in M\setminus M^*$ and each of $e$'s endpoints $W$, at least $\frac {m}f-u_e^W-\eps^{1/11}m$ vertices of~$W$ are occupied by trees of $\T'$ that use $e$. Thus, by~$(\ast)$, and setting $\T'':=\T\setminus\T'$, we have
 \begin{equation}
 \label{agosto}
 |\bigcup\mathcal T''|\le \hskip-.4cm\displaystyle\sum_{\substack{W\in V(M^*) \\ d(Q, W)m\ge |W\cap U'|}} 
    \hskip-.5cm
    \Big(
    \min\Big\{d(Q, W)m,\frac{d_{M^*}(W)}f m +\hat u_W \Big\}-|W\cap U'|\Big)-  \eps^{1/13}n.
    \end{equation}
    Now, let $X^-\in\T''$. As each $q\in\psi(Y_{X^-})$ is typical to all but at most $2\sqrt\eps|\V|$ 
  of the clusters of $V(M^*)$,  and since we did not embed $X^-$ in stage 1, there is a subset $M^-$ of $M^*$ with $|M^-|\le 4f\sqrt \eps |\V|$ such that  each $e\in M^*\setminus M^-$ has an endpoint~$W_e$ that fails~$(i)$ for $X^-$ because $d(Q,W_e)m-|W_e\cap U'|<2\eps^{1/10}m$. Let $W'_e$ be the other endpoint of $e$. Set $W^*:=\{W_e' : e\in M^*\setminus M^-\}$. 
Then,  by~\eqref{agosto}, the bound on $|M^-|$, and an easy standard
calculation, we have
    \begin{equation}
        \label{toM}
       |\bigcup\mathcal T''|\le \hskip-.4cm\displaystyle\sum_{\substack{W\in W^* \\ d(Q, W)m\ge |W\cap U'|}} 
    \hskip-.5cm
    \Big(
    \min\Big\{d(Q, W)m,\frac{d_{M^*}(W)}f m +\hat u_W \Big\}-|W\cap U'|\Big)-  \eps^{1/12}n.
    \end{equation}  
    Now we start the second stage, where we embed a growing set $\mathcal X'\subseteq\T''$, starting with $\mathcal X'=\es$. At each step,   we consider any  tree  from $\T''\setminus\mathcal X'$, which we will embed and add to $\mathcal X'$. We will also keep track of  vertices we cannot or do not wish to use, via  a set $U'_{\mathcal X'}$, which initially is equal to $U'$. 
    
    Say we are considering $X\in\T''\setminus\mathcal X'$. If there is an edge $e\in M^*\setminus M^-$ such that for each $v\in \psi(Y_X)$,    $|N(v)\cap W_e'\setminus U'_{\mathcal X'}|\ge \eps^{1/10}m$ and  $v$
       is typical to  $W_e'$, and furthermore,
    at most $\frac mf-\hat u_e^{W_e'}-\eps^{1/10}m$ vertices of~$W_e'$ are occupied by vertices used or added while embedding previous trees
using $e$ (in either stage), then we 
     embed  $X $ using Lemma~\ref{lem:T1}, 
     with the root embedded in $W_e'$. 
 We add $X$ to $\mathcal X'$,  and add $\varphi(X)$ plus an arbitrary set of $\max\{|X\cap C|, |X\cap D|\}-\min\{|X\cap C|, |X\cap D|\}$ vertices of $W\setminus U'_{\mathcal X'}$  to $U'_{\mathcal X'}$, where $W$ is the endpoint of $e$ that contains less of $\varphi(X)$. In this way, we keep the set $U'_{\mathcal X'}$ $(M^*\setminus M^-, \beta)$-balanced. Also,  (I) and (II) hold for the current set $\mathcal X'$ and the sets $Y_{X'}$ with $X'\in\mathcal X'$.
 
 Now, note that since at each step $|U'_{\mathcal X'}\setminus U'|\le 2|\bigcup\mathcal X'|$, since $U'_{\mathcal X'}$ is $(M^*\setminus M^-, \beta)$-balanced, and since the degree from $Q$ to clusters failing typicality for $\varphi(N(X))$ or not having enough free space is negligible, we can use~\eqref{toM} to see that we can always find a suitable edge $e$ for the tree~$X$ that is to be embedded in this step.  
 So we are done.
\end{proof}




\section{The proof of Theorem~\ref{maint'}.}

\label{setup}
\subsection{Choosing the constants and  a counterexample $G$}

Let $\mu<1$ be the constant from Theorem~\ref{thm:realmain}, and let
$\gamma$ be given by the premises of Theorem~\ref{maint'}. We can assume that $\gamma\le \mu^{100}/100$. 
Choose $\eps=\gamma^{10^6}$, and let $M_0, n_0$ be given by Lemma~\ref{reg:deg} for input $\eps$. 
We set $\beta:=\gamma^{300}(M_0n_0)^{-100}$ and $k_0:=\beta^{-100}$. This gives the following hierarchy of constants:
$$  \frac 1{k_0} \ll \beta \ll \frac 1{M_0n_0}\le  \eps\ll  \gamma\ll  \mu ,$$ 
where  as usual, $a\ll b$ means that there is a function which, given $b$, determines a much smaller  $a>0$. 
We will prove Theorem~\ref{maint'} for $k\ge k_0$ and  a fixed tree $T$ on $k$ vertices   by contradiction. For this, choose $G$ among all graphs with $|V(G)|\le \frac k\gamma$ and with   $d(G)>k-2$ that do not contain~$T$ such that  $|E(G)|+|V(G)|$ is minimised.  Then $G$ is robust  and~$d(G)< k$.

\subsection{Preparing $T$}\label{sec:prepT}
We use Lemma \ref{treepartlem} to obtain  a $\gamma^{50}\frac{\beta\eps}{2M_0}$-decomposition of $T$, which provides us with $S=S_C\cup S_D$,  $\mathcal T=\mathcal T_C\cup \mathcal T_D$, $\mathcal T_2$, and a labeling $C,D$ of the colour classes of $T$. In particular, we have $|S|\le \frac{100 M_0^2}{\beta^2\eps^2\gamma^{100}}=:c_\beta$; $\mathcal T_2\subseteq \mathcal T_C$; and $|\bigcup\mathcal T_C\setminus \bigcup\mathcal T_2|\ge |\bigcup\mathcal T_D|$.
Moreover,  setting $F_Z=\bigcup \mathcal T_Z$ for $Z\in \{C,D\}$, we  have 
\begin{equation}
    \label{FClarge}
    |F_C|\ge\frac{k-|S|-|\bigcup \mathcal T_2|}2+ |\bigcup \mathcal T_2|\ge \frac k2
    -c_\beta
    \text{ \ and \ } |F_D|\le \frac k2 +c_\beta,
\end{equation}
and  we set 
$$\textstyle \min_{F_Z}=\min\{|C\cap F_Z|,|D\cap F_Z|\}\text{ and } \textstyle\max_{F_Z} 
    =\max\{|C\cap F_Z|,|D\cap F_Z|\}.$$

We will now see that trees having a very special structure are very easy to embed.
Hence we can claim that $T$ does not have that structure.
\begin{claim}
    \label{unbalT}
    For each $Z\in \{C,D\}$, if 
    $\textstyle \min_{F_Z}
    \le \gamma^{32} k$
    then 
    $\textstyle\max_{F_Z} 
    \le \frac{k}{2}-\gamma^{32} k$.
\end{claim}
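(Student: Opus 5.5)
We argue by contradiction. Suppose that for some $Z\in\{C,D\}$ we have $\min_{F_Z}\le\gamma^{32}k$ but $\max_{F_Z}>\frac k2-\gamma^{32}k$. We then embed $T$ into $G$ directly, contradicting the choice of $G$. The point is that in this situation $F_Z$, and hence $T$, consists almost entirely of leaves attached to few vertices.

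\textbf{Counting leaves.} Let $L_Z$ be the larger colour class within $F_Z$. Since $F_Z$ is a forest whose other colour class has at most $\gamma^{32}k$ vertices, the number of vertices of $L_Z$ of degree at least two in $T$ is at most about
\[
\gamma^{32}k+|S|\le 2\gamma^{32}k .
\]
Indeed, each such vertex uses at least two edges into the small class or into $S$, and the number of edges of $F_Z$ plus the edges to $S$ is at most $|F_Z|+2|S|$. Hence $T$ has a set $\Lambda$ of at least $\frac k2-3\gamma^{32}k$ leaves. Let $P$ be the set of parents of leaves in $\Lambda$. Then $|P|\le\gamma^{32}k+|S|$, and $T':=T-\Lambda$ has at most $\frac k2+3\gamma^{32}k$ vertices. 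We will embed $T'$ first and then the leaves.

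\textbf{Embedding $T'$.} Apply Lemma~\ref{mindegsubgr} to $G$ itself with $\alpha=\gamma$; this is allowed since $k\ge\gamma n$ and $d(G)>k-2>(1-\gamma^3)k$. It gives a subgraph $H'$ with
\[
\delta(H')\ge(1+\gamma^3)\tfrac k2>|T'|+\gamma^{4}k ,
\]
so $T'$ embeds greedily in $H'$. We want more: every vertex of $P$ that carries many leaves should land on a vertex of degree at least $k-1$ in $G$. For this we use Lemma~\ref{twosides}, applied to $H'$ (or a suitable subgraph of average degree $>(1-\gamma)k$), together with Lemma~\ref{mayaslemma1}. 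Either $G$ already contains $T$, or we obtain a bipartite $(A_1,A_2)$ in which every vertex of $A_1$ has degree $>(1+\gamma^{1/4})k$ and every vertex of $A_i$ has $\gamma^{1/2}k$ neighbours in $A_{3-i}$.

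We then embed $T'$ greedily inside $H'$, with the following rule. Each vertex of $P$ whose leaf load is at least $\gamma^{8}k$ is mapped into $A_1$. There are few such heavy parents (at most $\gamma^{-8}$ of them), and they are pairwise far apart or can be reached by short paths. Between consecutive heavy parents we route the tree through $H'$, and we enter $A_1$ using the $\gamma^{1/2}k$ neighbours across $(A_1,A_2)$ combined with the large minimum degree of $H'$. All light parents carry at most $|P|\gamma^8k\le 2\gamma^{40}k^2/k$ leaves in total, i.e.\ at most $\gamma^{7}k$ leaves overall.

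\textbf{Attaching the leaves.} The heavy parents are processed one at a time. At any moment the number of used vertices is at most $k-1-\ell$, where $\ell$ is the number of leaves still to be placed. A parent sitting on a vertex of degree at least $k$ therefore always has at least $\ell$ free neighbours, and all of its leaves fit. The light parents sit on vertices of $H'$ of degree at least $(1+\gamma^3)\frac k2$. The total number of vertices used before their leaves are attached is at most $|T'|+\gamma^7k$ plus the leaves already placed. So we place the leaves of light parents before those of heavy parents: at that time at most $\frac k2+4\gamma^{7}k<\delta(H')$ vertices are used, and the light leaves go in greedily. Afterwards the heavy parents absorb the rest by the counting above.

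\textbf{Main obstacle.} The step I expect to be the main obstacle is the constrained embedding of $T'$: forcing all heavy parents into $A_1$ while keeping the greedy embedding of the rest of $T'$ within the $\gamma^3 k/2$ slack of $H'$. This is delicate when two heavy parents lie at odd distance, or at distance two in $T$. If the direct approach fails, I would first try to replace the use of Lemma~\ref{twosides} by passing to the reduced graph. There the high-degree clusters $L$ from the proof of Lemma~\ref{2+alpha} and their neighbourhoods give enough room to route short connecting paths via Lemma~\ref{lem:T1}.
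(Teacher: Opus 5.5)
Your proposal has a genuine gap: the leaf-attachment step fails, and the constrained embedding of $T'$, which you flag as the main obstacle, is never resolved.

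\textbf{The counting error.} The leaf count itself is fine, but the bound for light parents is wrong. They carry at most $|P|\cdot\gamma^8k\le 2\gamma^{40}k^2$ leaves. This is quadratic in $k$, not $O(\gamma^7 k)$; you cannot divide by $k$. In fact the typical situation has no heavy parents at all. The small class of $F_Z$ can have about $\gamma^{32}k$ vertices sharing roughly $k/2$ leaves, say each carrying about $\gamma^{-32}/2$ of them. Then for large $k$ every parent is light, and essentially all of $\Lambda$ hangs from vertices of $H'$ whose only guaranteed degree is about $(1+\gamma^3)\frac k2$. About $\frac k2$ vertices are already occupied by $T'$, so these leaves cannot be attached. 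Any threshold linear in $k$ has the same defect: essentially all of the up to $\gamma^{32}k+|S|$ parents must land on vertices of degree at least $k$. Routing them one by one into $A_1$ inside a greedy embedding of a tree of size about $\frac k2$ is not a workable plan.

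\textbf{The missing idea.} First, all parents of $\Lambda$ lie in a single colour class $Y$: the small class of $F_Z$ together with $S\cap Y$. So your odd-distance and distance-two worries are moot. If a subtree containing all of them is embedded into the bipartite graph $(A_1,A_2)$ with $Y$ on the $A_1$ side, plain greedy embedding works, because each new vertex only needs to be adjacent to its already embedded parent.

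Second, that subtree must be made small. Delete not only $\Lambda$ but also $F_{Z'}\setminus\bigcup\mathcal T_2$, where $Z'\neq Z$. Each of those trees is attached to a single vertex of $S$, so $T'':=T-\Lambda-(F_{Z'}\setminus\bigcup\mathcal T_2)$ is still a tree. It has only $O(\gamma^{31}k)$ vertices. When $Z'=C$, this uses property (d) of the decomposition, which forces $|\bigcup\mathcal T_2|\le 2\gamma^{32}k$.

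The embedding then runs in three steps:
\begin{enumerate}
\item[(1)] Every vertex of $A_i$ has at least $\gamma^5k\gg|T''|$ neighbours in $A_{3-i}$, so $T''$ embeds greedily with $Y$ in $A_1$ and the other class in $A_2$.
\item[(2)] Embed $F_{Z'}\setminus\bigcup\mathcal T_2$ greedily in the minimum-degree subgraph. At most about $\frac k2+2\gamma^{32}k$ vertices are in use at this point.
\item[(3)] Attach $\Lambda$ last, using $d_G(v)>k$ for every $v\in A_1$.
\end{enumerate}

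\textbf{Parameters.} Applying Lemma~\ref{mindegsubgr} with $\alpha=\gamma$ only yields $d(H')>(1-\gamma)k$. Lemma~\ref{twosides} needs $k\ge\alpha^{1/10}n$, so with only $k\ge\gamma n$ available you must take $\alpha\le\gamma^{10}$. Apply both lemmas with $\alpha=\gamma^{10}$.
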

\begin{proof}
Assume otherwise, and let $X, Y\in \{C,D\}$ be such that $|X\cap F_Z|\ge \frac k2-\gamma^{32} k$ and $|Y\cap F_Z|\le \gamma^{32} k$. Note that
$X\cap F_Z$ contains at most $\gamma^{32} k$ parents of vertices of $Y\cap F_Z$ and at most $c_\beta$ neighbours of $S$ that are not roots. So  all other vertices of $X\cap F_Z$ are leaves of $T$, and the set $L$ of these leaves obeys $|L|\ge \frac k2-2\gamma^{32} k-c_\beta$.

Let $\tilde G\subseteq G$ with $\delta(\tilde G)>(1+\gamma^{30})\frac k2$ and $d(\tilde G)>(1-\gamma^{10})k$, as given by Lemma~\ref{mindegsubgr}, applied with $\alpha=\gamma^{10}$ and  all edge weights  1. Apply Lemma~\ref{twosides}  with $\alpha=\gamma^{10}$ to see that either $T \subseteq G$, and we are done, or there is a bipartite subgraph  $H=(A_1, A_2)$ of $\tilde G$ with $d_G(v)>(1+\gamma^{3})k$ for each $v\in A_1$ and $\delta (H)\ge \gamma^{5}k$.

We let $Z'\in\{C,D\}$ be such that $Z'\neq Z$ and  claim we can embed $$T':=T-L-(F_{Z'}\setminus\bigcup\mathcal T_2)$$ in $H$.
Indeed, to see this is possible, first note that $|T-L-F_{Z'}|\le 2c_\beta +2\gamma^{32} k<\gamma^{31} k$. Second, note that if $\bigcup\mathcal T_2\subseteq F_{Z'}$, then $Z'=C$ and $Z=D$, and thus, $|F_{Z'}\setminus\bigcup\mathcal T_2|\ge |F_Z|\ge \frac k2-\gamma^{32}k$ which means  that $|\bigcup\mathcal T_2|\le 2\gamma^{32}k$. Hence, in all cases, $|T'|\le 2\gamma^{31}k$, which means we can embed $T'$ in $H$. We can even choose which side goes to which, and we will put 
 $Y\cap V(T')$ in $A_1$ and $ X\cap V(T')$ in~$A_2$. 

We now embed $F_{Z'}\setminus \bigcup\mathcal T_2$ greedily in $\tilde G$, which is possible by the high minimum degree of $\tilde G$  and as $|T-L|\le (1+2\gamma^{32})\frac k2$. 
Finally, we embed $L$ in~$G$ which is possible because the vertices of $A_1$ have degree $(1+\gamma^{10})k$ in $G$. We embedded $T$ in~$G$, a contradiction.
\end{proof}

Together with~\eqref{FClarge}, Claim~\ref{unbalT} immediately implies:
\begin{claim}
    \label{goodminFC}
    $\min_{F_C}>\gamma^{33}k$ and $\textstyle\max_{F_D}  \le \frac{k}{2}- \gamma^{33} k$.
\end{claim}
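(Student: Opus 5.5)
Both parts follow by applying Claim~\ref{unbalT} to $Z=C$ and to $Z=D$ and combining with the size bounds~\eqref{FClarge}, $|F_C|\ge\frac k2-c_\beta$ and $|F_D|\le\frac k2+c_\beta$. The only other ingredient is $c_\beta\ll\gamma^{33}k$, which holds because $k\ge k_0=\beta^{-100}$ while $c_\beta$ is polynomial in $\beta^{-1}$ (since $\eps$, $M_0$ and $\gamma$ are all bounded in terms of $\beta^{-1}$).

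\emph{First part.} I argue by contradiction and assume $\min_{F_C}\le\gamma^{33}k$. This is at most $\gamma^{32}k$, so Claim~\ref{unbalT} with $Z=C$ gives $\max_{F_C}\le\frac k2-\gamma^{32}k$. Hence
\[
|F_C|=\min_{F_C}+\max_{F_C}\le \gamma^{33}k+\tfrac k2-\gamma^{32}k<\tfrac k2-c_\beta ,
\]
since $\gamma^{32}-\gamma^{33}\ge\gamma^{32}/2$ and $c_\beta\le\gamma^{32}k/2$. This contradicts~\eqref{FClarge}, so $\min_{F_C}>\gamma^{33}k$.

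\emph{Second part.} I split into two cases according to the value of $\min_{F_D}$.

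If $\min_{F_D}\le\gamma^{32}k$, then Claim~\ref{unbalT} with $Z=D$ directly gives $\max_{F_D}\le\frac k2-\gamma^{32}k\le\frac k2-\gamma^{33}k$.

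If instead $\min_{F_D}>\gamma^{32}k$, then
\[
\max_{F_D}=|F_D|-\min_{F_D}<\tfrac k2+c_\beta-\gamma^{32}k\le\tfrac k2-\gamma^{33}k ,
\]
again because $c_\beta$ is negligible compared to $\gamma^{32}k-\gamma^{33}k$.

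There is no real obstacle here. The only point needing care is checking that $c_\beta$ is indeed negligible against $\gamma^{33}k$, and this follows from the hierarchy $1/k_0\ll\beta$ fixed when the constants were chosen.
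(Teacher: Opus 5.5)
Your proposal is correct and is the paper's argument: the paper only remarks that Claim~\ref{unbalT} together with~\eqref{FClarge} immediately gives the claim. You spell out that deduction (applying Claim~\ref{unbalT} with $Z=C$ and with $Z=D$, using $|F_Z|=\min_{F_Z}+\max_{F_Z}$) and correctly check that $c_\beta\ll\gamma^{33}k$ from the choice $k\ge k_0=\beta^{-100}$.
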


\subsection{Preparing $G$ }\label{sec:G}

We apply Lemma \ref{reg:deg} with $\eps, n_0, M_0$ as defined above and with  $\rho=\eps^{1/100}$ to $G$ 
and let~$R$ be the corresponding  reduced graph, with parts of size $m$. By our choice of $\T$ we have $|X|\le \beta m$ for each $X\in\T$. 
Set $\kappa=k\cdot \frac{|V(R)|}{|V(G)|}$. Then $(1-\eps)k\le \kappa m\le k$ and by Fact~\ref{fact:2},
\begin{equation}\label{degreesofR}
    \text{$d(R)\ge (1-\gamma^{33} )\kappa$}.
\end{equation}
Next, we apply Lemma~\ref{mindegsubgr} to $R$  with $\alpha=\gamma^{11}$ to obtain a subgraph $R'$ of $R$ with
\begin{equation}\label{degreesofR'}
    \text{$d(R')\ge (1-\gamma^{11} )\kappa$ \ and \ $\delta(R')\ge (1+\gamma^{33} )\frac \kappa 2 $}.
\end{equation}
Let $G'$ be obtained from $G'':=G[\cup_{X \in V(R')} X]$ by deleting all of the (by Fact~\ref{fact:typical}) at most $2\sqrt\eps |G''|$ vertices of  degree below $(1+ \gamma^{34})\frac k2$. Observe that the minimum/average degree  of $G'$ is bounded by
\vskip-.2cm
\begin{equation}
    \label{degreesofG'}
    \text{
   $d(G')\ge (1-\gamma^{10})k$ and  $\delta(G')\ge (1+\gamma^{35})\frac k2$.
    }
\end{equation} 
Apply Lemma~\ref{2+alpha} to $R'$ with $\alpha=\gamma^{10}$  
to obtain  adjacent clusters $A$ and $B$ such that 
\begin{equation}\label{degA+BR'}
    \text{$d_{R'}(A)+d_{R'}(B)\ge (2+2\gamma^{10})\kappa$ \ and \  $d_{R'}(A)\ge (1 +\gamma^{10})  \kappa$},   
\end{equation} 
where without loss of generality we  assume that $d_{R'}(A)\ge d_{R'}(B)$.
To be able to use Lemma~\ref{robustlemma} in the next subsection, we will  not work in $R'$, which has slightly lower average degree than $R$. However, we are interested in the minimum degree bound of $R'$ given by~\eqref{degreesofR'}. As a compromise we consider a graph $R''$, which has almost the same average degree as $R$, and where the neighbours of $A$ and $B$ respect the minimum degree from $R'$.
Namely, we define $R''$ as the graph obtained from $R$ by deleting all edges between $\{A,B\}$ and $N(\{A,B\})\setminus V(R')$.
Observe that by~\eqref{degA+BR'},
\begin{equation}\label{degA+B}
    \text{$d_{R''}(A)+d_{R''}(B)\ge (2+2\gamma^{10})\kappa$ \ and \ $d_{R''}(A)\ge (1 +\gamma^{10} ) \kappa$}.
\end{equation}
Moreover, by construction and by~\eqref{degreesofR'},
\begin{equation}\label{degreesofR'onlyinNA}
    \text{$d(R'')\ge (1-\gamma^{10} )\kappa$, and $d_{R''}(W)\ge (1+\gamma^{33}) \frac\kappa 2$ for each $W\in N(A)\cup N(B)$}.
\end{equation}

Consider the unweighted underlying graph of $R''-A-B$. Apply Lemma~\ref{fracmatchlem} twice to this graph, both times with $f\in\{\lceil  \gamma^{-200} \rceil, \lceil  \gamma^{-200} \rceil +1\}$ even. In the first application we use weights  $w(X)= 2\lfloor \frac{ fd(A,X)}2 \rfloor $ for $X\in V(R'')$, obtaining an $f$-matching $M$ and a stable set $Z'$. In the second application we use weights  $w_{AB}(X)= 2\lfloor \frac{f\max\{d(A,X), d(B,X)\}}2 \rfloor $ for $X\in V(R'')$ and obtain an $f$-matching
 $M_{AB}$ and a stable set~$Z'_{AB}$. 
 Among all choices for $M_{AB}$, we choose $M_{AB}$ so that its intersection with $M$ is maximised.

 In the case that $d(A,  Z') \le |N(Z')|+\gamma^{33}\kappa$, the obstruction $Z'$ is not significant enough to disturb our planned use of $M$ for the embedding of~$T$, and therefore, in this case we  set $Z:=\emptyset$. In the other case we set~$Z:=Z'$. We  define $Z_{AB}$ analogously.
 Note that by~\eqref{degA+B} and by Lemma~\ref{fracmatchlem},  
\begin{enumerate}[label=(M\arabic*), ref=M\arabic*]
\item   \label{mexico}
    $ d_{R''}(A, V(M)\cup Z)\ge d_{R''}(A)-\gamma^{33}\kappa\ge \kappa+\gamma^{11}\kappa$,
    \item   \label{mexicoAB}
    $d_{R''}(A, V(M_{AB})\cup Z_{AB})+d_{R''}(B, V(M_{AB})\cup Z_{AB})\ge 2\kappa+\gamma^{11}\kappa$, 
     \item  \label{seand} if $Z\neq\es$ then $|Z|\ge
    d_{R''}(A,Z)\ge |N(Z)|+\gamma^{33}\kappa,$
     \item  \label{seandAB} if $Z_{AB}\neq\es$ and $d(A,Z_{AB})=0$ then $|Z_{AB}|\ge    d_{R''}(B,Z_{AB})\ge |N(Z_{AB})|+\gamma^{33}\kappa,$
       \item     \label{dMZ}
$d(A,W)\ge \frac{d_M(W)}f$ 
for each $W\in  Z$,
    \item     \label{McoversnbhdofA}
$\frac{d_M(W)}f\ge d(A,W)-2\gamma^{200}$ 
for each $W\in V(R'')\setminus (Z'\cup\{A,B\})$ and \\ $ \frac{d_{M_{AB}}(W)}f \ge \max\{d(A,W), d(B,W)\}-2\gamma^{200}$ 
for each $W\in V(R'')\setminus (Z'_{AB}\cup\{A,B\})$,
 %
 \item\label{M5}  each $W\in N(Z)$ is touched by $f$ edges of $M$ whose other endpoint is in $Z$, and each $W\in N(Z_{AB})$ is touched by   $f$ edges of $M_{AB}$ whose other endpoint is in~$Z_{AB}$.
\end{enumerate}

\noindent Assume $Z=\es$. We claim that  $d_{M_{AB}}(W)\ge
\min\{
d_M(W),
w_{AB}(W)\}$ for each
$W\in Z'_{AB}$.
Indeed,  suppose this does not hold for some $W\in Z_{AB}'$. Then $d_{M_{AB}}(W)<w_{AB}(W)$ and $d_{M_{AB}}(W)<
d_M(W)$, and thus there is an $e\in M\setminus M_{AB}$ with endpoints $U\in N(Z_{AB}')$,~$W$.  By Lemma~\ref{fracmatchlem}~(iii)--(iv), applied to
$M_{AB}$ and $Z'_{AB}$,  $U$ is covered by $M_{AB}$ and all its $M_{AB}$-neighbours
lie in $Z_{AB}'$. So there is an 
$e'\in M_{AB}\setminus M$ with endpoints $U$ and $U'\in Z_{AB}'$. Replacing $e'$ by $e$ increases
$|M\cap M_{AB}|$, while preserving properties~(i)--(iv) of Lemma~\ref{fracmatchlem}, since  the degree of $W$
increases by 1, but is still at most  $w_{AB}(W)$, a contradiction. Hence our claim holds and thus, 
 by~\eqref{mexico}, \eqref{McoversnbhdofA}  and~\eqref{degA+B},
\begin{enumerate}[label=(M\arabic*), ref=M\arabic*]
\setcounter{enumi}{7}
\item 
    \label{AintonewM}
   $\text{if $Z=\es$, then } 
     \sum_{W\in V(M_{AB})}
\min \big\{
d(A,W),\frac{d_{M_{AB}}(W)}f\big\}
\ge\kappa+\gamma^{12}\kappa.$
\end{enumerate}

\subsection{Finding a  powerful cluster  $B^*$ in $Z$}\label{sec:power}

The following claim will only be necessary for the embedding in Case 2.2, so it can be skipped until then if the reader prefers. 
\begin{claim}
\label{Bstar}
Let $Z^*:=Z\cap N(A)$ and $z^*:= \sum_{W\in Z^*}d(A, W)$.
Let $b>0$ be such that $d(Z^*, N(Z))\ge b|Z^*|$.
Then there are $B^*\in Z^*$ and  $\mathcal Q=\{Q_1,Q_2,\ldots, Q_{\ell}\}\subseteq N(Z)$ such that
\begin{enumerate}
    \item[(a)] $d_{R''}(B^*, \mathcal Q)\ge b$,
     \item[(b)] $d_{R''}(W, \mathcal Q)< b+1$ for all $W\in Z^*$, and 
    \item[(c)] for $1\le j<\ell$, at least one of the following holds:
    \begin{enumerate}
    \item[(i)]
    $d_{R''}(Q_j)\ge (1+\gamma^{35})\kappa  $ or
    \item[(ii)]  $d_{R''-Z^*-(N(Z)-\{Q_1,\ldots, Q_{j-1}\})}(Q_j) \ge 
      (1+\gamma^{35})  \kappa 
      -  z^*  -b $.
\end{enumerate}
\end{enumerate}
\end{claim}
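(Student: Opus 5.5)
The plan is to build $\mathcal Q$ as an initial segment of a greedily constructed ordering of $N(Z)$, and then truncate that ordering at the first moment some cluster of $Z^*$ reaches degree $b$ into it. Truncating at the first such moment is what gives (a) and (b) for free; condition (c) only concerns $Q_j$ for $j<\ell$ and only refers to $\{Q_1,\dots,Q_{j-1}\}$, so it is inherited by prefixes.

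\emph{Step 1 (admissible sequence).} Call a sequence $Q_1,Q_2,\dots$ of distinct clusters of $N(Z)$ \emph{admissible} if every $Q_j$ satisfies (i) or (ii) of (c) with respect to its predecessors. Start from the empty sequence. As long as some cluster of $N(Z)$ not yet chosen satisfies (i) or (ii) with respect to the current sequence, append it. Let $\mathcal Q^\infty$ be the final set and $\mathcal P:=N(Z)\setminus\mathcal Q^\infty$. Every $P\in\mathcal P$ fails both (i) and (ii) with respect to $\mathcal Q^\infty$. Concretely:
\begin{itemize}
\item $d_{R''}(P)<(1+\gamma^{35})\kappa$;
\item the weight that $P$ sends outside $Z^*\cup\mathcal P$ is less than $(1+\gamma^{35})\kappa-z^*-b$, so $P$ sends weight more than $z^*+b-\gamma^{35}\kappa$ into $Z^*\cup\mathcal P$.
\end{itemize}

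\emph{Step 2 (the leftover part $\mathcal P$ is harmless).} This is the step I expect to be the main obstacle. The aim is to show that $d(Z^*,\mathcal P)$ is negligible, ideally that $\mathcal P=\es$, so that $d(Z^*,\mathcal Q^\infty)\ge b|Z^*|$ (up to an error that can be absorbed). I would argue by contradiction using robustness via Lemma~\ref{robustlemma}. Take $S$ to be the set of vertices of $G$ lying in the clusters of $\mathcal P$, possibly together with those of $Z^*$; recall that $Z$ is stable, so clusters of $Z^*$ only see $N(Z)$. Compare $\sum_{v\in S}d_G(v)+e(S,G-S)$ with $(k-2)|S|$, translating to the reduced graph at a loss of $O(\rho+\eps)$ per cluster. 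The $\mathcal P$-clusters have total degree below $(1+\gamma^{35})\kappa$, and their weight into $Z^*\cup \mathcal P$ is large, so edges leaving $S$ are scarce. Meanwhile the $Z^*$-clusters have degree roughly controlled by $z^*$, via \eqref{dMZ} and \eqref{M5}. Together these should violate the inequality of Lemma~\ref{robustlemma}. I expect some care is needed with the precise choice of $S$ (whether to include all of $Z^*$ or only part of it) and with the bookkeeping of the $z^*+b$ term. If a clean contradiction fails, the fallback is to show that $d(Z^*,\mathcal P)\le \gamma^{35}\kappa|Z^*|$ and to run Step 3 with $b$ replaced by a slightly smaller value.

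\emph{Step 3 (truncation).} Given $d(Z^*,\mathcal Q^\infty)\ge b|Z^*|$, averaging gives some cluster of $Z^*$ with degree at least $b$ into $\mathcal Q^\infty$. Let $\ell$ be minimal such that some $W\in Z^*$ has $d_{R''}(W,\{Q_1,\dots,Q_\ell\})\ge b$, set $B^*:=W$ and $\mathcal Q:=\{Q_1,\dots,Q_\ell\}$. Then:
\begin{itemize}
\item (a) holds by the choice of $\ell$;
\item (b) holds because every $W\in Z^*$ has weight less than $b$ into $\{Q_1,\dots,Q_{\ell-1}\}$, and adding $Q_\ell$ raises this by at most $d(W,Q_\ell)\le 1$;
\item (c) holds for all $j<\ell$ because $Q_1,\dots,Q_\ell$ is a prefix of an admissible sequence.
\end{itemize}
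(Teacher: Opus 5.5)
\paragraph{Verdict.} Your Steps~1 and~3 are fine, and the overall architecture would work: grow an admissible sequence, then truncate at the first moment some cluster of $Z^*$ has weight $b$ into it. There is, however, a genuine gap in Step~2, which is where all the content of the claim lies. You aim at the wrong statement there, and you never bring in the hypothesis that makes the robustness argument go through.

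\paragraph{Why Step~2 fails as written.} Apply Lemma~\ref{robustlemma} to $S=\bigcup(Z^*\cup\mathcal P)$. The weight from $Z^*$ to $\mathcal Q^\infty$ consists of edges leaving $S$. If that weight is large, the inequality of Lemma~\ref{robustlemma} holds however badly the clusters of $\mathcal P$ behave. So robustness cannot show that $\mathcal P=\es$ or that $d(Z^*,\mathcal P)$ is negligible, and nothing prevents your procedure from halting with $\mathcal P$ heavily seen by $Z^*$.
\begin{itemize}
\item Your remark that edges leaving $S$ are scarce holds only for the $\mathcal P$-part.
\item \eqref{dMZ} and \eqref{M5} bound $M$-degrees, not $d_{R''}(W,\mathcal Q^\infty)$ for $W\in Z^*$. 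Their only relevant consequence is $|Z^*|\ge z^*\ge |N(Z)|+\gamma^{33}\kappa$, via \eqref{seand}.
\item The second bullet of Step~1 also does not follow. Concluding that $P$ sends more than $z^*+b-\gamma^{35}\kappa$ into $Z^*\cup\mathcal P$ would need $d_{R''}(P)\ge\kappa$, which you do not have.
\item Your fallback of lowering $b$ is not an option. Part (a) must hold for the given $b$, and in Case~2.2 the slack in $b$ is only $\gamma^{41}\kappa\ll\gamma^{35}\kappa$.
\end{itemize}

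\paragraph{How to repair Step~2.} Step~3 only needs some $W\in Z^*$ with $d(W,\mathcal Q^\infty)\ge b$, and you get this by assuming the contrary. Put $q:=|\mathcal Q^\infty|$. Under that assumption every $W\in Z^*$ has $d(W,\mathcal Q^\infty)\le\min\{b,q\}$.
\begin{itemize}
\item If $\mathcal P=\es$, this contradicts $d(Z^*,N(Z))\ge b|Z^*|$.
\item Otherwise, Lemma~\ref{robustlemma} translated to $R''$ gives, up to an $O(|Z^*|)$ term for edges from $Z^*$ to $A,B$,
\[(1-\gamma^{100})\kappa\big(|Z^*|+|\mathcal P|\big)<2\min\{b,q\}|Z^*|+|\mathcal P||Z^*|+\sum_{P\in\mathcal P}\big(d_{R''}(P)+d_{R''-Z^*-\mathcal P}(P)\big).\]
\end{itemize}
Suppose no $P\in\mathcal P$ satisfies (i). Averaging then yields some $P\in\mathcal P$ with $d_{R''-Z^*-\mathcal P}(P)$ exceeding roughly
\[\big(\kappa-|\mathcal P|-2\min\{b,q\}\big)\frac{|Z^*|}{|\mathcal P|}-2\gamma^{35}\kappa.\]
We have $|Z^*|\ge|N(Z)|\ge|\mathcal P|$ and $z^*-|N(Z)|+q+b-2\min\{b,q\}\ge\gamma^{33}\kappa$. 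Hence this quantity is at least $(1+\gamma^{35})\kappa-z^*-b$. (If the bracket is negative, that right-hand side is negative anyway.) So $P$ satisfies (ii), contradicting the termination of Step~1.

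This is exactly where the term $-b$ in (ii) comes from: it pays for the weight that $Z^*$ already sends into the chosen clusters.

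\paragraph{Comparison with the paper.} The computation above is the paper's, with $N_j$ and $j-1$ replaced by $\mathcal P$ and $q$. The paper avoids a separate termination argument. It orders all of $N(Z)$ canonically: first clusters satisfying (i), then clusters maximising $d_{R''-Z^*-N_i}$. It then runs the computation for every $j<\ell$, where $j<\ell$ supplies the same hypothesis. With the repaired Step~2 inserted, your route is a valid variant of the paper's proof.
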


\begin{proof}
    Order the clusters of $N(Z)$ as $Q_1, Q_2,\ldots, Q_{|N(Z)|}$ as follows. For $i\ge 1$, 
    if possible choose $Q_i$ such that $d_{R''}(Q_i)\ge \kappa+\gamma^{35}\kappa$, and otherwise 
    choose $Q_i$ in $N_i:=N(Z)\setminus\{Q_1,\ldots, Q_{i-1}\}$ such that $d_{R''-Z^*-N_i}(Q_i)$ is maximised.
Next, 
let $\ell\in\{1,2,\ldots,|N(Z)|\}$ be minimum such that   there is a $B^*\in Z^*$ with
$  \sum_{i=1}^{\ell}d(Q_i,B^*)\ge b.
$
Note that $\ell$ is well defined as by assumption $d(Z^*, N(Z))\ge b|Z^*|$. 
So~$(a)$ and~$(b)$~hold.

To see $(c)$, 
fix $j$ with $1\le j<\ell$ and  set $S:=\bigcup (Z^*\cup   N_j)\subseteq V(G)$. By Lemma~\ref{robustlemma}, we know that $(k-2)|S|<\sum_{v\in S}d_G(v)+ e(S, G-S)$. Thus, 
   \begin{align*}
         (1 - &\gamma^{100})  \kappa |Z^*\cup N_j|
       <
       \sum_{W\in Z^*\cup N_j}\big(d_{R''}(W)+ d_{R''-Z^*-N_j}(W)\big)
       \\
        & \le
     2d(Z^*, N(Z)\setminus N_j)+
    d(Z^*, N_j)+
     \sum_{W\in N_j}\big(d_{R''}(W) +d_{R''-Z^*-N_j}(W)\big)
        \\  & \le   
      2\min\{j,b\}|Z^*| +
     (|N(Z)|-j+1)|Z^*|+   \sum_{W\in  N_j}d_{R''}(W) +\sum_{W\in  N_j}d_{R''-Z^*-N_j}(W)  
    \end{align*}
    where the last inequality builds on
    the fact that $d(Z^*, N(Z)\setminus N_j)\le \min\{j,b\}$
    which follows from our choice of $\ell$.
     So, either there is a  $W\in N_j$ with $d_{R''}(W)\ge \kappa+\gamma^{35}\kappa$, in which case by our ordering of $N(Z)$, also $d_{R''}(Q_j)\ge \kappa+\gamma^{35}\kappa$ and  we are done, or there is a   $W\in N_j$ with 
    \begin{align*}
    d_{R''-Z^*-N_j}(W) & >\big ( \kappa 
       -\gamma^{100}\kappa -  |N(Z)|+j-1   - 2\min\{j,b\}   \big) \frac{|Z^*|}{|N_j|}-\gamma^{35}\kappa -\gamma^{100}\kappa 
      \\ & \ge 
      (1+\gamma^{35})  \kappa -z^*
        - b
    \end{align*}
      where we  used the fact that $z^*\ge |N(Z)| +\gamma^{33}\kappa$ by~\eqref{seand}, and the fact that $\gamma^{100}\kappa>1$. By our ordering of $N(Z)$, this proves the claim.
\end{proof}

\subsection{The embedding, Case 1: $Z=\emptyset$}\label{Zismepty}

Our embedding procedure splits into two main cases, depending on the behaviour of $A$, $B$, $M$, $Z$, $M_{AB}$ and $Z_{AB}$ as determined in Section~\ref{sec:G}. The current section focuses on the case that $Z=\es$, and is complemented by Section~\ref{Znotempty}, where we assume $Z\neq\es$.
In each case we show that  we can embed~$T$, which gives the desired contradiction.
 We will distinguish four subcases, which together cover all possible~cases: \smallskip \\ - Case 1.1 $d(A, V(M))\ge \kappa+ \frac{|F_D|}m + \gamma^{32} \kappa$,\\ - Case 1.2 $d(A, V(M))< \kappa+ \frac{|F_D|}m + \gamma^{32} \kappa$ and $Z_{AB}=\es$, \\ - Case~1.3 $Z_{AB}\neq\es$ and $d(A, Z_{AB})=0$,  \\ - Case 1.4 $d(A, Z_{AB})>0$. 
 \\
Depending on the case, we will rely on $M$ or $M_{AB}$,  and may use $B$~or~not. More precisely, in Case 1.1 we use $M$, and in Cases 1.2--1.4 we use $M_{AB}$.
The properties of cluster $B$ only become important in~Cases~1.2 and~1.3.

\subsubsection{Case 1.1: $d(A, V(M))\ge \kappa+ \frac{|F_D|}m + \gamma^{32} \kappa$.}  

\smallskip \
We embed $S_C$ in $A$ and $S_D$ in $B\cap V(G')$, respecting adjacencies, and with each image being typical to all but at most $2\sqrt\eps|V(R')|$ of the clusters of $R'$, which is possible by Fact~\ref{fact:typical} and since $B\in V(R')$. Let $\varphi$ denote the embedding.

Next, we  extend $\varphi$ by embedding $F_D$ greedily into $G'$  using at most $m-\gamma^{100}m\le m-f$ vertices in any cluster, which is possible by~\eqref{degreesofG'},
since each $X\in \mathcal T_D$ has~only one neighbour in~$S_D$, and since $\varphi (S_D)\subseteq V(G')$.
%
Set $U_D:=\varphi(F_D)$.
For each  $e \in M, W \in e$, choose   $\bar u_e^W \in \{\lfloor\frac {|U_D\cap W|}{f}\rfloor, \lceil\frac {|U_D\cap W|}{f}\rceil\}$ and $u_W\le (1-\frac{d_M(W)}f)m$ so that $u_W+\sum_{e \ni W } \bar u^W_e=|U _D \cap W|$. For each $e\in M$ with endvertices~$W_1,W_2$, set  
$u^{W_1}_e=u^{W_2}_e= \max \{\bar u_e^{W_1}, \bar u_e^{W_2}\}$.   Extend $U_D$ 
to a set $U$ containing exactly $u_W+\sum_{e \ni W }u^W_e$ vertices of $W$, for each $W\in V(M)$.
Then $U$ is  $(M,\beta)$-balanced and  fulfills~$|U|\le 2|F_D|$. 

To embed $F_C$ with each $y\in Y$ embedded correctly, we  apply Lemma~\ref{lem:T3} with~$Q:=A$,  the $f$-matching~$M$, the family $\mathcal T_C$ of trees,  
 sets $Y_X:=N(S_C)\cap X$, and the function $\psi$ mapping each $y\in Y$ to  $\varphi(N(y)\cap S_C)$, noting that 
by the assumption of Case~1.1,
$$ d(A,V(M))m\ge k+ |F_D|+\gamma^{51}k\ge |F_C|+2|F_D|+\gamma^{51}k\ge |F_C|+|U|+\gamma^{51}k,$$
so an easy calculation using~\eqref{McoversnbhdofA}, the
 fact that $Z=\es$ and $|V(R'')|/f=o(\gamma^{33}\kappa)$
shows that condition $(\ast)$ holds.
We embedded all of $T$. 

\subsubsection{Case 1.2: $d(A, V(M))  < \kappa + \frac{|F_D|}{m}+\gamma^{32} \kappa$  and $Z_{AB}=\es$. }

\smallskip \
As $Z=\es$ and by~\eqref{mexico}, the assumptions of this case imply that
 $d(A, V(R''))  < \kappa + \frac{|F_D|}{m}+\gamma^{31} \kappa$. Thus
by~\eqref{mexicoAB} and since $Z_{AB}=\es$, we have
\begin{equation}
    \label{dBsolarge}
    d(B, V(M_{AB}))> \frac{|F_C|}{m}+\gamma^{12}\kappa.
\end{equation}

For  $\mathcal{ T}'\in \{\T_C,\T_D\}$, if 
$|\uT'\cap D| \le  |\uT'\cap C|$, 
 set~$\rho_{\T'}:= \frac{|\uT'\cap D|}{|\uT'\cap C|}$ and $\sigma(\T'):=D$.
Otherwise, set $\rho_{\T'}:=\frac{|\uT'\cap C|}{|\uT'\cap D|}$ and $\sigma(\T'):=C$. (So $\rho_{\T'}$ is the ratio of the smaller 
to the larger colour class in~$\T'$, and $\sigma(\T')$ is the colour class that is smaller in~$\T'$.)
Further, for    $X\in\T'$ and $\T''\subseteq \T'$, set $\rho_X:= \frac{|X\cap \sigma(\T')|}{|X\setminus \sigma(\T')|}$ and $\rho_{\T''}:= \frac{|\uT''\cap \sigma(\T')|}{|\uT''\setminus \sigma(\T')|}$ (where we use the convention $a/0=\infty$ for $a>0$). 
Let $\{C^*, D^*\}=\{C,D\}$~be such that \vskip-.25cm
\begin{equation}\label{rhodos}
    \rho_{\T_{C^*}}\le \rho_{\T_{D^*}}.
\end{equation}

We embed $S_{C^*}$ in $A$ and $S_{D^*}$ in $B$, respecting adjacencies, and with each image being typical to all but at most $2\sqrt\eps|V(M_{AB})|$ of the clusters of $V(M_{AB})$. This is possible by Fact~\ref{fact:typical}.  Let $\varphi$ denote the embedding. At any point during our embedding, we call an edge $e$ {\it open} if at most $\frac mf-\gamma^{250}m$ vertices of each of its endpoints  are occupied by trees that used $e$ previously.
 
In the first step, we extend $\varphi$ to a growing subset $\mathcal X$ of $\T_{D^*}$, starting with $\mathcal X=\es$. Throughout this process, we will maintain the property that $\varphi(\bigcup\mathcal X)$ is a fully $(M_{AB}, \beta)$-balanced set. We proceed as follows: 
At each stage, as long as $\mathcal X\neq \T_{D^*}$,
we choose $X\in\T_{D^*} \setminus\mathcal X$  to minimise~$\rho_X$ if $\rho_{\T_{D^*}\setminus\mathcal X}\le 1$ and to maximise~$\rho_X$ if $\rho_{\T_{D^*}\setminus\mathcal X}> 1$. 
 If  there is  an open edge $e\in M_{AB}$ with
endpoints $W_1, W_2$ such that for each $j=1,2$,  the vertices of  $\varphi(N(X)\cap S_{D^*})$ are  typical to  $W_j$ and see at least $\gamma^{250}m$ unused vertices of~$W_j$,   then we add $X$ to $\mathcal X$ and embed $X$ by applying Lemma~\ref{lem:T1}. We take care to embed $X$ so that the embedding remains fully $(M_{AB}, \beta)$-balanced. If this is no longer possible, we end step 1  and set $\T^1_{D^*}:=\mathcal X$ and $U^1:=\varphi(\uT^1_{D^*})$. Hence after step 1, at least one of the following holds: 
\begin{enumerate}
    \item[(a)] $\T_{D^*}^1= \T_{D^*}$, or
        \item[(b)] there is an $X^-\in\T_{D^*}\setminus\T_{D^*}^1$ such that for each open   $e\in M_{AB}$ with
endpoints $W_1, W_2$ there is a $j\in\{1,2\}$ such that some $v\in\varphi(N(X^-)\cap S_{D^*})$ is not typical to  $W_j$ or sees less than $\gamma^{250}m$  vertices of~$W_j\setminus\varphi(\uT^1_{D^*})$.
\end{enumerate}
If   (a) holds, then we can   finish the embedding similarly as in Case 1.1 by  applying Lemma~\ref{lem:T3} with   $Q:=A$, $M_{AB}$,  $\mathcal T_{C^*}$,  $U:=U^1$,
the sets $Y_X$ and the function $\psi$ as above, noting that 
Condition~$(\ast)$ holds by~\eqref{McoversnbhdofA} and~\eqref{AintonewM}. 
 So from now on, we will assume that  (b) holds, and (a) does not hold. 
 
Let~$X^-$ be as in (b). As we chose the vertices of $\varphi(N(X^-)\cap S_{D^*})$ to be typical to both endpoints of all edges of $M_{AB}$, except for a set $M^-_1$ of  size $|M^-_1|\le10\sqrt\eps f|M_{AB}|$, each  open $e\in M_1:=M_{AB}\setminus M^-_1$ has at most one endpoint  in~$\mathcal V^B$, where $\mathcal V^B$ is the set consisting of all  $W\in V(M_1)$ with at most $d(B,W)m-\gamma^{50}m$ used vertices.
We let $M_2\subseteq M_1$ consist of all open edges $e$ that have an  endpoint $W_{e,B}$ in $\mathcal V^B$. 
We set $\W^B:=\V^B$ if $\sigma(\T_{D^*})=C^*$ and let $\W^B$ consist of the endpoints of $M_2$ not belonging to $\V^B$ otherwise.

By~\eqref{rhodos}, and by the way we chose $\mathcal X$, at least one of the following holds: 
$\rho_{\T_{C^*}}\le \rho_{\T_{D^*}}\le 
\rho_{\T_{D^*}\setminus \T^1_{D^*}}\le 1$
or $x:=\big||\bigcup(\T_{D^*}\setminus \T^1_{D^*})\cap C^*|-|\bigcup(\T_{D^*}\setminus \T^1_{D^*})\cap D^*|\big|\le\beta m-2$. In the former case set $\tilde\T_{D^*}:= \T_{D^*}\setminus \T^1_{D^*}$. In  the latter case let $X_=$ be a star on $x+2$ vertices, add the bipartition classes of $X_=$ to $C^*$ and $D^*$ so that 
$|\bigcup(\{X_=\}\cup\T_{D^*}\setminus \T^1_{D^*})\cap C^*|=|\bigcup(\{X_=\}\cup \T_{D^*}\setminus \T^1_{D^*})\cap D^*|$, root $X_=$ in   $X_=\cap C^*$, and set $\tilde\T_{D^*}:=\{X_=\}\cup (\T_{D^*}\setminus \T^1_{D^*})$. In both cases,
\begin{equation}
\label{tildeTD}
\tilde\T_{D^*}\neq\es,
\end{equation}
since we assume we are not in case (a). 
 In what follows we will embed $\T_{C^*}$ together with
$\tilde\T_{D^*}$ 
and if we succeed we will have embedded $T$. By our choice of $X_=$,
we have
\begin{equation}\label{ratio}
\rho_{\T_{C^*}}\le 
\rho_{\tilde\T_{D^*}}\le 1.
\end{equation}
Moreover, by~\eqref{dBsolarge} and by the definition of $\mathcal V^B$, we have
\begin{equation}\label{flor2}
\sum_{W\in \mathcal V^B}\big(d(B, W)m-|W\cap U^1|\big)\ge  |\bigcup\tilde\T_{D^*}|+\gamma^{13}k.
\end{equation}

We now start the second step of our embedding, where we extend $\varphi$ to a growing subset $\mathcal X'$ of $\T_{C^*}\cup\tilde\T_{D^*}$, starting with $\mathcal X'=\es$. 
Our plan is to use $M_2$ to embed some of 
$\tilde\T_{D^*}$
with the roots in $\mathcal V^B$, together with some of $\T_{C^*}$, 
 in a way that makes the imbalances of the embedded trees almost cancel out.
We  will always embed into unused vertices and ensure throughout  step 2 that 
\begin{equation}
    \label{bbb}
\text{$\varphi(\bigcup\mathcal X')$ is fully  $(M_2, 2\beta, \W^B)$-balanced.}
\end{equation}
Step 2 has two substeps, 2a and 2b (although we may abort after substep 2a). In sub\-step 2a, 
at each stage, we consider a pair of trees $X_1, X_2\in\tilde\T_{D^*}\setminus\mathcal X'$ with $\rho_{X_1}\le 1\le \rho_{X_2}$, if such a pair exists (we allow $X_1=X_2$). By~\eqref{flor2}, there is  an  open edge $e\in M_2$   such that for $i=1,2$,  the vertices of $\varphi(N(X_i)\cap S)$ are  typical to   $W_{e,B}$ (the endpoint of $e$ in $\V^B$)  and see at least $\gamma^{250}m$ unused vertices of $W_{e,B}$. We apply Lemma~\ref{lem:T1} to embed one of the $X_i$  with the root in $W_{e,B}$ so that  property~\eqref{bbb} is preserved, and add that $X_i$ to $\mathcal X'$ (as we choose which $X_i$ is embedded, this is always possible).  Now, if no pair $X_1, X_2$ as above exists, then either $\rho_{X}\ge 1$ for all   $X\in \tilde\T_{D^*}\setminus\mathcal X'$ and we end step 2, or 
 $\rho_{X}< 1$ for all   $X\in \tilde\T_{D^*}\setminus\mathcal X'$, and we go to substep 2b.
 
 In substep 2b, 
at each stage, we 
 consider a pair of unembedded trees $X'_1\in\T_{C^*}\setminus\mathcal X'$, $X'_2\in\tilde\T_{D^*}\setminus\mathcal X'$, if such a pair exists. Among all possible choices we choose $X_1', X_2'$ so that   $\rho_{X'_1}$ is minimised  and $\rho_{X'_2}$ is maximised.  
 If there is  an  open edge $e\in M_2$   such that   the vertices of $\varphi(N(X_2')\cap S)$ are  typical to   $W_{e,B}$  and see at least $\gamma^{250}m$ unused vertices of $W_{e,B}$, and if embedding $X_2'$ with the root in $W_{e,B}$ would preserve property~\eqref{bbb}, then we do so using Lemma~\ref{lem:T1} and add $X_2'$ to $\mathcal X'$. Otherwise, we check whether there is  an  open edge $e\in M_2$   such that   the vertices of $\varphi(N(X_1')\cap S)$ are  typical to both endpoints of $e$ and see at least $\gamma^{250}m$ unused vertices of each of them. If this is the case, and if furthermore, embedding $X'_1$ using $e$,   with $W_{e,B}$  accommodating the larger colour class of  $X'_1$  if $\sigma(\T_{D^*})=C^*$ and the smaller   colour class  if $\sigma(\T_{D^*})=D^*$,  would preserve~\eqref{bbb}, then we do this, using Lemma~\ref{lem:T1} and add $X_1'$ to $\mathcal X'$.  
 We end step 2b once a pair  $X'_1, X'_2$ as above either does not exist or  neither  of $X'_1, X'_2$ can be embedded as described.
 
 After ending step 2,
 we let $\T^2_{C^*}\subseteq \T_{C^*}, \T^2_{D^*}\subseteq \tilde\T_{D^*}$ be the set of all trees that were embedded in this step. Then by~\eqref{tildeTD} and by the way we embedded, $\T^2_{D^*}\neq\es$. Further, letting $U^2:=\varphi(\bigcup  ( \T^2_{C^*}\cup\T^2_{D^*}))$, one of the following holds
\begin{enumerate}
    \item[(I)]  $\rho_{X}\ge 1$ for all   $X\in\tilde\T_{D^*}\setminus \T^2_{D^*}$ and $\T_{C^*}^2= \es $, or
    \item[(II)]  one of $\T_{C^*}\setminus\T^2_{C^*}$,   $\tilde\T_{D^*}\setminus\T^2_{D^*}$ is empty, or 
    \item[(III)] there are $X^-_1\in\T_{C^*}\setminus\T^2_{C^*}$, $X^-_2\in\tilde\T_{D^*}\setminus\T^2_{D^*}$ and $M_2^-\subseteq M_2$ 
    with $|M_2^-|\le 10\sqrt\eps f|V(M_{AB})| $
    such that for each open $e\in M_2\setminus M_2^-$  we have one of the following:\begin{enumerate}
    \item[(A)]  $d(A,W)m< |W\cap (U^1\cup U^2)|+\gamma^{250}m$ for some endpoint $W$ of $e$; or
     \item[(B)] $d(B,W_{e,B})m< |W_{e,B}\cap (U^1\cup U^2)|+\gamma^{250}m$,
\end{enumerate}
\end{enumerate}
where for item (III), we used the fact that the vertices of $\varphi(N(X_1^-\cup X_2^-)\cap S)$ are  typical to  almost all clusters of $V(M_{AB})$.
 
 Moreover, setting  $U^2_P:=\varphi(\uT^2_P)$ for $P\in\{C^*, D^*\}$,  we claim that 
 \begin{equation}
     \label{buenosaires}
     |U^2_{C^*}|\le |U^2_{D^*}|.
 \end{equation}
 This clearly holds if $\T_{C^*}^2=\es$, so assume otherwise. We know that  $\T_{D^*}^2\neq \es$. Setting $c_1:=|U_{C^*}^2\setminus\bigcup \W^B|$, $c_2:=|U_{C^*}^2\cap\bigcup \W^B|$, $d_1:=|U_{D^*}^2\setminus\bigcup \W^B|$, $d_2:=|U_{D^*}^2\cap\bigcup \W^B|$, by the way we embedded~$\T_{C^*}^2$, we know that $c_2, d_1>0$. 
 By the way we chose $X'_1, X'_2$ and  embedded them, and by~\eqref{ratio},  we  have 
 \begin{equation*}
 \label{lluvia}
     \frac{c_1}{c_2}=\frac{|U_{C^*}^2\setminus\bigcup \W^B|}{|U_{C^*}^2\cap \bigcup\W^B|}\le \rho_{\T_{C^*}}\le \rho_{\tilde\T_{D^*}}\le \frac{|U_{D^*}^2\cap\bigcup \W^B|}{|U_{D^*}^2\setminus \bigcup\W^B|}=\frac{d_2}{d_1}\le 1,
 \end{equation*} 
 that is, $c_1d_1\le c_2d_2$. 
So by~\eqref{bbb}, which implies $c_2+d_2\le c_1+d_1$ or equivalently, $c_2-c_1\le d_1-d_2$, we have
$
c_1(d_1-d_2)\le d_2(c_2-c_1)\le d_2(d_1-d_2)
$. Thus if $d_1>d_2$, then $c_1\le d_2$, and using~\eqref{bbb} again, we see that $c_1+c_2\le d_1+d_2$, which is as desired for~\eqref{buenosaires}.
On the other hand if  $d_1\le d_2$ then  $d_1=d_2$. As we assume $\T_{C^*}^2\neq\es$, we know we did not skip substep 2b, and since all $X'_2\in\tilde\T_{D^*}$ embedded in substep 2b fulfill $\rho_{X'_2}<1$, and by~\eqref{bbb}, we cannot have embedded any of $\T_{D^*}^2$ in substep 2b. However as $d_1=d_2$ and~\eqref{bbb} that is not possible. We reached a contradiction. 
This proves~\eqref{buenosaires}.

 Note that
 $U^1\cup U^2$  is fully $(M_{AB},3\beta)$-balanced. Set  $\T_{C^*}^3:=\T_{C^*}\setminus \T_{C^*}^2$ and $\T_{D^*}^3:=\tilde\T_{D^*}\setminus  \T_{D^*}^2$. 
 
In the third and final step, we embed the remainder of $T$, according to which of the cases (I)-(III) we are in.

Let us start by assuming we are in  case (II), as this is the easiest case. We only need to embed
$\T_P^3$ for the at most one $P\in\{C^*, D^*\}$ with $\T^3_P\neq\emptyset$. For this, apply Lemma~\ref{lem:T3} with $M_{AB}$, $\T^3_P$, $Q=A$ if $P=C^*$ and $Q=B$ if $P=D^*$, $U=U^1\cup U^2$, the  neighbourhoods~$Y_X$ of~$S$ in   $X\in\T^3_P$, with $\psi$ pointing to $\varphi(N(X)\cap S)$ (and in the case of $X=X_=$ pointing to any typical vertex of $B$), and with  $
\beta_*:=4\sqrt\eps$ and 
$\eps_*:=\eps^{1/10}$ in place of its parameters $\beta$ and $\eps$.  Condition $(\ast)$ holds because we only embedded into open edges, because $U$ is  $(M_{AB},3\beta)$-balanced, and by~\eqref{AintonewM} if $Q=A$ and 
by~\eqref{flor2} and~\eqref{buenosaires} 
if~$Q=B$. 
Indeed, in the case $Q=B$ note that by~\eqref{buenosaires}, at least half  of $U^2$ belongs to $U_{D^*}^2$, while for~\eqref{flor2}, only  the degree of $B$ into $\mathcal V^B$ is counted, and $\mathcal V^B$ contains exactly one endpoint of each edge used in step 2. So while $\uT_{C^*}^2$ may block some vertices in $\bigcup N(B)$, this is compensated  by parts of clusters in $V(R'')\setminus\mathcal V^B$ that were used by~$\T_{D^*}^2$. We embedded all of~$T$.

In case (III), then we let $E_{\lnot A}$ denote the edges that are as in (III)(A). 
 We embed $\uT^3_{D^*}$ first, using Lemma~\ref{lem:T3} with $E_{\lnot A}$, $\T^3_{D^*}$, $Q=B$,  $U^1\cup U^2$, and with $Y_X$, $\psi$, $\beta_*$, $\eps_*$ as before.  Condition $(\ast)$ holds because we only embedded into open edges earlier, because  $U^1\cup U^2$  is fully $(M_{AB},3\beta)$-balanced,  
 by~\eqref{flor2} and~\eqref{buenosaires}, and by the definition of $E_{\lnot A}$.
 Let $P_1:=\varphi(\uT^3_{D^*})$. Adding vertices edge by
edge to the less occupied endpoint, we obtain a set
$P_2$ with $|P_2|\le|P_1|$ such that
$U:=U^1\cup U^2\cup P_1\cup P_2$
is fully $(M_{AB},10\beta)$-balanced.
By the definition of $E_{\lnot A}$, an easy
edge-by-edge calculation shows that passing from
$U^1\cup U^2$ to $U$ decreases the available degree from
$A$ by at most
$|P_1|+o(\gamma^{12}k)$.
So  we can embed $\uT^3_{C^*}$ as  Condition $(\ast)$ of Lemma~\ref{lem:T3} holds 
 and by~\eqref{AintonewM}, if we apply the lemma with $M_{AB}$, $\T^3_{C^*}$, $Q=A$, $U$, and with $Y_X$, $\psi$, $\eps_*$  as before and $\beta_*:=10\sqrt\eps$. 
We embedded all of~$T$.

It remains to handle case (I). We apply 
Lemma~\ref{lem:T3} with $M_2$, $\T^3_{D^*}$, $Q=B$,  $U^1\cup U^2$, and~$Y_X$ and $\psi$ as before, in the graph obtained by deleting all edges from $B$ to $\bigcup (V(M_{2})\setminus \V^B)$, to embed $\uT^3_{D^*}$.  Condition $(\ast)$ holds because we only embedded into open edges earlier, because  $U^1\cup U^2$  is $(M_{AB},3\beta)$-balanced, and by~\eqref{flor2} and~\eqref{buenosaires}. Set $U^3:=\varphi(\uT^3_{D^*})$. 
We observe that
by~\eqref{ratio} and by~\eqref{bbb}, $|C^*\cap\uT^3_{D^*}|$ and $|D^*\cap\uT^3_{D^*}|$ differ by at most $|M_2|\cdot 2\beta m\le 2\beta fn$. Since furthermore,  $\bigcup \W^B$ hosts one colour class of $\T_{D^*}^3$, and by the assumption of case (I), we can add to $U^1\cup U^2\cup U^3$ a set of at most $3\beta fn$ vertices, so that the obtained set $U$ is fully $(M_{AB},4\beta)$-balanced. 
We then embed  $\uT^3_{C^*}$ using Lemma~\ref{lem:T3} with $M_{AB}$, $\T^3_{C^*}$, $Q=A$,  $U$, and with $Y_X$, $\psi$, $\beta_*$, $\eps_*$  as before. Condition $(\ast)$ holds because we only embedded into open edges earlier, because  $U$  is $(M_{AB},4\beta)$-balanced, by~\eqref{McoversnbhdofA} and by~\eqref{AintonewM}.
Again, we embedded all of~$T$.

\subsubsection{Case~1.3: $Z_{AB}\neq\es$ and $d(A, Z_{AB})=0$.}

We use Fact~\ref{fact:typical} to embed  $S_{C}$  in $A$ and $S_{D}$ in~$B$, respecting adjacencies and  choosing images that are typical to all but at most $\sqrt\eps |V(M_{AB})|$ of the clusters of~$V(M_{AB})$.~Let~$M_Z$ consist of the edges of $M_{AB}$ that run between $Z_{AB}$ and $N(Z_{AB})$.
The hypotheses of the case together with~\eqref{seandAB}  imply that $d(B,Z_{AB})\ge |N(Z_{AB})|$.  Thus by~\eqref{degreesofR'onlyinNA},~we~have $$d(B,V(M_{Z}))m\ge |N(Z_{AB})|m\ge (1+\gamma^{33}) \frac k 2\ge |F_D|+\gamma^{34}k.$$ So condition~$(\ast)$ of Lemma~\ref{lem:T3} holds if we apply it with $M_Z$, $\T_D$, $Q=B$,  $U=\es$, and the usual neighbourhoods $N_X$ and function $\psi$ to embed $\T_D$ in edges of $M_Z$. The lemma guarantees that the set $U_B$ of all used vertices of $\bigcup V(M_Z)$ has no more than $d_{M_Z}(W)\frac mf$ vertices in each cluster $W$. So, for each edge $e\in M_Z$ with endpoints~$W,W'$, we can and will add to $U_B$ a set of $\big\lfloor|\frac{|U_B\cap W|}{d_{M_Z}(W)}-\frac{|U_B\cap W'|}{d_{M_Z}(W')} |\big\rfloor$ unused vertices  from  $W$ if $\frac{|U_B\cap W|}{d_{M_Z}(W)}<\frac{|U_B\cap W'|}{d_{M_Z}(W')}$ and from $W'$ otherwise. The new set $U'_B$  is fully $(M_{Z},\beta)$-balanced and thus also $(M_{AB},\beta)$-balanced. Apply Lemma~\ref{lem:T3} with~$Q=A$, $M_{AB}$, $\T_C$, $U=U'_B$, and the usual  $N_X$ and $\psi$ to embed~$\T_C$. Condition~$(\ast)$ holds~by~\eqref{degA+B} and since $A$ has no neighbours in $Z_{AB}$. This finishes the embedding of~$T$ and thus~Case~1.3.

\subsubsection{Case 1.4: $d(A, Z_{AB})>0$. }
For $P\in\{C,D\}$ we set $imb_P:=\big||F_{P}\cap P|-|F_{P}\setminus  P|\big|$. If $imb_C\ge imb_D$ we set $(\hat C,\hat D):=(C,D)$, otherwise we set $(\hat C,\hat D):=(D,C)$. Then 
\begin{equation}
    \label{aguadebeber}
    {\max}_{F_{\hat D}}+ {\min}_{F_{\hat C}}
   \le\frac{k-|S|}2\le
    {\max}_{F_{\hat C}}+ {\min}_{F_{\hat D}},
\end{equation}
where we use the notation $ {\max}_{F_C}, {\min}_{F_C}, {\max}_{F_D}, {\min}_{F_D}$ introduced in Section~\ref{sec:prepT}.  
Let $P^{F_{\hat C}}\in\{\hat C, \hat D\}$  be such that 
$|F_{\hat C}\cap  P^{F_{\hat C}}|\ge |F_{\hat C}\setminus P^{F_{\hat C}}|$
and let $\tilde\T_{\hat C}$ consist of all trees $X\in\T_{\hat C}$ with $ |X\cap  P^{F_{\hat C}}|> |X\setminus P^{F_{\hat C}}|$. Define $P^{F_{\hat D}}$ and $\tilde\T_{\hat D}$
analogously.

Let $M'$ consist of all edges of $M_{AB}$ between $Z_{AB}$ and $N(Z_{AB})$.
For all $e\in M'$, let $\nu_{B'}(e)$ be the endpoint of $e$ in $N(Z_{AB})$. For $e\in M'$, let $\nu_A(e)$ be the endpoint of $e$ in $N(Z_{AB})$ if  either $\max_{F_{\hat C}}=|F_{\hat C}\cap \hat C|$ and $\max_{F_{\hat D}}=|F_{\hat D}\cap \hat C|$, or $\min_{F_{\hat C}}=|F_{\hat C}\cap \hat C|$ and $\min_{F_{\hat D}}=|F_{\hat D}\cap \hat C|$, and otherwise  let $\nu_A(e)$ be the endpoint of $e$ in $Z_{AB}$. For each $X\in\T_{\hat C}$, set $\nu_X(e)=\nu_A(e)$ and for each $X\in\T_{\hat D}$, set $\nu_X(e)=\nu_{B'}(e)$. 
Further, for each $e\in M'$, we   let $W_e, W'_e$ denote its endpoints,  with $W_e\in N(Z_{AB})$ if $P^{F_{\hat D}}=\hat C$
and $W_e\in Z_{AB}$ otherwise. Let $\W$ be the set of all $W_e$, and let $\W'$ be the set of all $W'_e$.

We choose any $B'\in Z_{AB}\cap N(A)$
and use Fact~\ref{fact:typical} to embed  $S_{\hat C}$  in $A$ and $S_{\hat D}$ in~$B'$, respecting adjacencies and  choosing images that are typical to all but at most $2\sqrt\eps |V(M_{AB})|$ of the clusters of~$V(M_{AB})$. Throughout, we call the embedding $\varphi$, 
and call an edge $e\in M_{AB}$ {\it open} if  previously embedded trees that used $e$ occupy at most $\frac mf-\gamma^{250}m$ vertices of either endpoint of $e$.

We proceed in three steps. In the first step we embed  part of $\T_{\hat D}$ as follows, at any point letting~$U_1$ denote the 
set of vertices used  so far in step 1. 
At each stage of step 1,  we check whether  there is an open edge $e\in M'$ and an  $X\in\T_{\hat D}$ 
such that the vertices of $\varphi(N(X)\cap S)$ are typical to $\nu_X(e)$ and see at least $\gamma^{250}m$ unused vertices of $\nu_X(e)$. If furthermore, embedding $X$ with the root embedded in~$\nu_X(e)$ leaves 
$U_1$ fully 
 $(M',\beta,\W)$-balanced, then we do so using Lemma~\ref{lem:T1}. 
When we can no longer embed, we stop.
Note that after  step~1,
\begin{equation}\label{moll}
    \text{$\frac{|U_1|}2\ge |\varphi^{-1}(U_1)\cap P^{F_{\hat D}}|\ge \frac{|U_1|}2-2\beta mf|M'|\ge \frac{|U_1|}2-\eps n$.}
\end{equation}
Let $\T_{\hat D}^1$ be the set of  trees embedded in step 1. By~\eqref{degreesofR'onlyinNA} and by the typicality of $\varphi(S)$,~any
$X\in \T_{\hat D}\setminus\T_{\hat D}^1$ with $|X\cap\hat C|= |X\cap\hat D|$   could have been embedded, and if there
were trees $X_1, X_2\in \T_{\hat D}\setminus\T_{\hat D}^1$ with $|X_1\cap\hat C|\ge |X_1\cap\hat D|$ and $|X_2\cap\hat C|\le |X_2\cap\hat D|$ then  one of~$X_1, X_2$ could have been embedded. So either $|X\cap\hat C|> |X\cap\hat D|$ or $|X\cap\hat C|< |X\cap\hat D|$ holds  for all  $X\in\T_{\hat D}\setminus\T_{\hat D}^1$.
Hence, by the definition of $\tilde \T_{\hat D}$, and by the first inequality of~\eqref{moll},
\begin{equation}\label{mtjt}
    \T_{\hat D}\setminus\T_{\hat D}^1\subseteq\tilde \T_{\hat D}.
\end{equation}

In the second step, we 
embed some trees of $\tilde \T_{\hat C}\cup(\tilde \T_{\hat D}\setminus\T_{\hat D}^1)$, at any point letting~$U_2$ denote the 
set of vertices used  so far in step 2. 
At each stage, if
  there is an open $e\in M'$ and an  $X\in\tilde \T_{\hat C}\cup (\tilde\T_{\hat D}\setminus\T_{\hat D}^1)$ 
such that each $v\in \varphi(N(X)\cap S)$ is typical to $\nu_X(e)$ and sees at least $\gamma^{250}m$ unused vertices of $\nu_X(e)$, and if  embedding $X$  with the root embedded in~$\nu_X(e)$ maintains $U_2$ fully 
$(M', 2\sqrt\eps, \W')$-balanced, 
 then we use   Lemma~\ref{lem:T1} to embed $X$ in this way.
Once this is no longer possible, we stop, and let $\T^2_{\hat C}$ and~$\T^2_{\hat D}$ be the sets of trees embedded in step 2.
 We claim that one of the following holds:
\begin{enumerate}
    \item[(a)]  
    $\T_{\hat D}=\T^1_{\hat D}\cup\T^2_{\hat D}$, or   
    \item[(b)] for all but at most $10\sqrt\eps f|V(R'')|$ of the open edges~$e\in M'$, we have that $\displaystyle\min_{Q\in\{A,B'\}}\{d(Q,\nu_{Q}(e))m-|\nu_{Q}(e)\cap (U_1\cup U_2)|\}\le \gamma^{250}m$.
    %
\end{enumerate}

 Indeed, assume otherwise. As (a) fails, and by~\eqref{mtjt}, we have   $\es\neq \T_{\hat D}\setminus(\T^1_{\hat D}\cup \T^2_{\hat D})\subseteq\tilde \T_{\hat D}$ and hence, 
there is  an $X_1\in\tilde\T_{\hat D}\setminus(\T^1_{\hat D}\cup \T^2_{\hat D})$. By~\eqref{degreesofR'onlyinNA}, since $U_1\cup U_2$ is $(M', 3\sqrt\eps)$-balanced, and since we have not embedded all of $T$ yet,   there is a set $M''$ of at least $\sqrt\eps|M_{AB}|$ 
open edges $e\in M'$ such that the vertices of $\varphi(N(X_1)\cap S)$ are typical to $\nu_{X_1}(e)$ and see at least $\gamma^{250}m$ unused vertices of $\nu_{X_1}(e)$. However, we
did not embed $X_1$ in step~2. Thus, letting  $\{u_e^W\}_{e\in M', W\in e}$ be the set associated to the $(M',2\sqrt\eps)$-balanced set $U_2$, we have that
  for  each $e\in M''$, we have $u_e^W-u_e^{W'} \ge (2\sqrt\eps-\beta) m$, while for all other edges  $e'\in M'$ we have $u_{e'}^W-u_{e'}^{W'} \ge 0$.
 So, by the definition of~$\hat C, \hat D$, and by the second inequality of~\eqref{moll}, there is an
$X_2\in\tilde\T_{\hat C}\setminus \T^2_{\hat C}$. 
The vertices of $\varphi(N(X_1\cup X_2)\cap S)$ are atypical to  at most $10\sqrt\eps f|V(M_{AB})|$ endpoints of edges of $M'$. Since $(b)$ fails,  
there is an open edge $e\in M'$ such that the vertices of $\varphi(N(X_i)\cap S)$ are typical to $\nu_{X_i}(e)$ and see at least $\gamma^{250}m$ unused vertices of $\nu_{X_i}(e)$ for $i=1,2$. At least one of $X_1, X_2$ could   have been embedded, a contradiction. Thus at least one of~(a), (b) holds. Set $U_0:=(\varphi(S)\cap\bigcup V(M'))\cup U_1\cup U_2$.

In case $(a)$, we add some suitable unused vertices to $U_0$ to obtain a fully $(M',\beta)$-balanced (and thus also fully $(M_{AB}, \beta)$-balanced) set $U'$.
We use Lemma~\ref{lem:T3} to embed 
 $\T_{\hat C}\setminus \T^2_{\hat C}$, with $Q=A$, $U'$,  $M_{AB}$, and the usual $N_X$ and $\psi$. 
  Condition $(\ast)$ holds by~\eqref{McoversnbhdofA} and~\eqref{AintonewM}, and since by assumption $Z=\es$.

In case $(b)$, by~\eqref{degreesofR'onlyinNA}, and since $U_0$ is fully $(M', 3\sqrt\eps)$-balanced, we can  embed $\T_{\hat D}\setminus (\T^1_{\hat D}\cup \T^2_{\hat D})$  in $M'$ similarly as we embedded  $\T_{\hat C}\setminus \T^2_{\hat C}$ in case $(a)$, losing all control on the balance of its image in $M'$. 
Let $U_3$ be the vertices used for this. 
We add a set $P$ of at most $|U_3|$ vertices to $U_0\cup U_3$ so that the new set $U_4$ is fully $(M', 4\sqrt\eps)$-balanced. Thus $U_4$ is fully $(M_{AB}, 4\sqrt\eps)$-balanced.
We embed $\T_{\hat C}\setminus \T^2_{\hat C}$  by using Lemma~\ref{lem:T3}, with $Q=A$, 
$U_4$,  $M_{AB}$, the usual $N_X$ and $\psi$, and $\beta_*:=4\sqrt\eps$ and $\eps_*:=\eps^{1/10}$. Condition~$(\ast)$ holds since   $Z=\es$, by~\eqref{McoversnbhdofA} and~\eqref{AintonewM}, and as we are in case~(b), and so, using $U_3$ and adding $P$ decreases the available degree from $A$ only by $|U_3|+o(\gamma^{12}k)$. We embedded~$T$.

\subsection{The embedding, Case 2: $Z\neq\emptyset$}\label{Znotempty}

We start this case by proving that we may assume that
\begin{equation}
\label{IloveRio}
|N(Z)| <\textstyle\frac{\max_{F_C}+\max_{F_D}}m+\gamma^{34} \kappa. 
\end{equation} 
Assume the contrary (in particular $Z\neq\emptyset$). We will obtain a contradiction by embedding~$T$. Let $B'$ be a neighbour of $A$ in $Z$ (such a neighbour exists by the first part of~\eqref{seand}).  We embed   $S_C$   in $A$, and $S_D$  in $B'$, respecting adjacencies and such that each image in $A$ is typical to all but at most $\sqrt\eps |Z|$ clusters of $Z$, and each image in~$B'$ is typical to all but at most $\sqrt\eps |N(Z)|$ clusters of $N(Z)$. Fact~\ref{fact:typical} guarantees that this is possible. Let $\varphi$ denote the embedding.

By~\eqref{FClarge} and~\eqref{degreesofR'onlyinNA} we can  choose a subset $M_D$ of $\lfloor\frac fm (\max_{F_D} +\gamma^{100}k)\rfloor$  edges of $M$ between $N(B')$ and $Z$
such that for all $W \in N(Z), d_{M_D}(W) \le d(B',W)f$. 
Moreover, as we assume  (\ref{IloveRio}) does not hold, we can choose  a set $M_C$ of $\lfloor\frac fm (\max_{F_C} +\gamma^{100}k)\rfloor$ edges of $M\setminus M_D$ between $Z$ and $N(Z)$.
We plan to first embed $F_D$, for each tree  $X\in\T_D$ using an edge $M_D$ with endpoints $W \in N(Z)$, $W' \in Z$, so that $X$ is embedded in $W\cup W'$, with  its root in $W$,  
and then embed $F_C$, for each $X\in \T_C$ using an edge of $M_C$ with endpoints $W \in Z$, $W'\in N(Z)$, putting the  root of $X$ in $W$. 
\\
So, let us extend $\varphi$ to $F_D$. We successively embed trees $X\in\T_D$  by applying Lemma~\ref{lem:T1}, always maintaining the property that for each $e\in M_D$, at most $(1-\gamma^{101})\frac{m}{f}$ vertices of each endvertex of $e$ are occupied by trees using $e$ for the embedding. While doing so,  we maintain for as long as possible the property that $U_D$ is fully $(M_D, \beta)$-balanced, where $U_D$ are the currently used vertices.   By~\eqref{FClarge}, \eqref{degreesofR'onlyinNA} and our choice of $M_D$, we can maintain this property until $|X\cap C|\ge |X\cap D|$  either holds for all the remaining trees $X$ or for none of them. Hence by~\eqref{FClarge} and \eqref{degreesofR'onlyinNA}, we can embed all these remaining trees by first finding free space in the one of $\bigcup Z$, $\bigcup N(Z)$ they will use less. 
\\
To embed $F_C$ we proceed in the same way, also employing the fact that by~\eqref{dMZ}, the degree of any vertex of $\varphi (S_C)$ into any $W\in Z$ is sufficient to almost fill the $M_C$-edges incident with $W$. 
This proves~\eqref{IloveRio}.

Before we proceed, we need a quick definition. Let $C', D'\in\{C,D\}$ be such that 
\begin{equation}\label{CSDS}
|C'\setminus S|\ge |D'\setminus S|
\end{equation}
 and $C'\neq D'$. 
Clearly, $|C'|\ge \frac {k-c_\beta}2$.
We divide the remainder into three subcases, which together cover all possibilities:\smallskip \\
- Case 2.1 (a) assumes  $|F_{C'}\cap D'|\ge |F_{C'}\cap C'|$;
\\ - Case~2.1 (b) assumes  $|F_{C'}\cap D'|< |F_{C'}\cap C'|$ and $|F_{D'}\cap C'|\ge \frac k2$; \\ -  Case 2.2 assumes $|F_{C'}\cap D'|< |F_{C'}\cap C'|$ and $|F_{D'}\cap C'|< \frac k2$.\smallskip \\ Because large parts of the proofs are identical we treat Cases 2.1~(a) and~(b) together.

\subsubsection{Case 2.1: $(a)$ $|F_{C'}\cap D'| \ge |F_{C'}\cap C'| $ \\ {\color{white}{Case 2}} or $(b)$  $|F_{C'}\cap D'|< |F_{C'}\cap C'|$ and $|F_{D'}\cap C'|\ge \frac k2$. }

Since $|F_{D'}|\le k$ and $|D'|\le\frac k2 +c_\beta$, and by~\eqref{CSDS},   we know that in both subcases,
\begin{equation}\label{flor} 
\textstyle\max_{F_{C'}} < \frac{k}{2}+c_\beta\text{ \ and\ }|F_{D'}\cap C'| > |F_{D'}\cap D'|.
\end{equation}
Let $B'\in N(A)\cap Z$. We use Fact~\ref{fact:typical} to embed  $S_{D'}$  in $A$ and $S_{C'}$ in~$B'$, respecting adjacencies and  choosing images that are typical to all but at most $\sqrt\eps |V(M)|$ of the clusters of~$V(M)$. 

We will embed  $\mathcal T$ in several steps, in slightly different ways depending on the case: In Case~2.1~$(a)$
we start by embedding $F_{C'}$  from~$B'$ between $Z$ and $N(Z)$ with the roots in  $\bigcup N(Z)$, and then 
embed some of $F_{D'}$  from $A$ between $Z$ and $N(Z)$, with the roots in $\bigcup N(Z)$. In Case~2.1 $(b)$ we first embed some of $F_{D'}$  from $A$ between $Z$ and $N(Z)$, with the roots in $\bigcup N(Z)$, and then embed $F_{C'}$  from $B'$ between~$Z$ and $N(Z)$ with the roots in  $\bigcup N(Z)$, while unembedding some of $F_{D'}$ from the previous step.
Then, in both cases, we embed part of the remainder of  $F_{D'}$  from $A$ between $Z$ and~$N(Z)$,  with the roots in  $\bigcup Z$, and finally  embed the leftover of $F_{D'}$ outside~$Z\cup N(Z)$. 

Let us start step 1. 
In case $(a)$, we first
  embed $F_{C'}$ as follows. We call an edge {\it open} if each of its endpoints contains less than $\frac mf-\gamma^{250}m$ vertices that are occupied by trees that used $e$ earlier. Now, 
as long as we can find an unembedded tree  $X\in\T_{C'}$ and an open edge $e\in M$ such that the vertices of $\varphi(N(X)\cap S)$ are typical to   and see at least $\gamma^{250}m$ unused vertices of the endpoint of $e$ in $N(Z)$, and as long as embedding~$X$ keeps the  number of vertices embedded  into each endpoint of $e$ within $\beta m$, 
we  use Lemma~\ref{lem:T1} to embed $X$. Once this is no longer possible, we stop and call $\T_{C'}^1$ the set of all remaining trees from $\T_{C'}$. We claim that
 either $|X\cap D'|\ge |X\cap C'|$ for all   $X\in\T^1_{C'}$  or $|X\cap D'|< |X\cap C'|$ for all  $X\in\T^1_{C'}$. To see this, assume for contradiction there are  $X_1, X_2\in\T^1_{C'}$ with $|X_1\cap D'|\ge |X_1\cap C'|$ and $|X_2\cap D'|< |X_2\cap C'|$. Then by~\eqref{degreesofR'onlyinNA} and~\eqref{flor}, and by the typicality of $\varphi(N(X_i)\cap S_{C'})$ to most of $V(M)$,  one of~$X_1, X_2$ could  have been embedded. Thus our claim holds.
 So we are able to embed the remainder of~$\T_{C'}$ tree by tree using Lemma~\ref{lem:T1}, where for each $X\in\T^1_{C'}$, we use~\eqref{degreesofR'onlyinNA} to find an open edge $e\in M$ such that  the vertices of $\varphi(N(X)\cap S)$ 
 see at least $\gamma^{250}m$ unused vertices of the endpoint of $e$ in $N(Z)$. 
 Indeed, we can find such an edge $e$   if $|X\cap D'|\ge |X\cap C'|$ for all   $X\in\T^1_{C'}$, and otherwise, by considering the free space in the endpoints $W'\in Z$ of edges $e$ whose endpoint $W\in N(Z)$ has at least $\gamma^{250}m$ unused vertices seen by $\varphi(N(X)\cap S)$. 
 After embedding~$F_{C'}$, 
we embed as much as possible of  $\uT_{D'}$ in exactly the same way as   $\T_{C'}$,  with the roots  embedded in $\bigcup N(Z)$. We keep embedding until  we have embedded all of $\Td$ and obtained a  contradiction, or one of the~following~holds:
 \begin{enumerate}
     \item[(Ia)] $d(A, N(Z))m\le |\uTd^a\cap C'|+|\uTc\cap D'|+\gamma^{100}|N(Z)|m$ or
     \item[(IIa)]  $\frac k2\le |\uTd^a\cap C'|+|\bigcup (\Td\setminus\Td^a)\cap D'|+|\uTc\cap D'|\le\frac k2+\beta k$,
 \end{enumerate} 
  where $\Td^a$ is  the subset of $\Td$ we embedded in this step. 

Now if we are in 
  case $(b)$, we use step 1 to embed all of $F_{C'}$ and part of $F_{D'}$ as~follows. We start with $F_{D'}$, embedding as much as possible of  $\uT_{D'}$ in exactly the same way as in step 1,  with the roots  embedded in $\bigcup N(Z)$, while taking care to  never fill a cluster $W\in N(Z)$ with more than $(1-\gamma^{100})m$ vertices. This will define an auxiliary embedding~$\varphi^*$. More precisely, we embed in this way until 
  \begin{enumerate}
     \item[(Ib)] 
     $ \min\big\{\frac k2, d(A, N(Z))m\big\}\le |\uTd^*\cap C'|+\gamma^{100}|N(Z)|m$,
 \end{enumerate} holds,  where $\Td^*$ is  the subset of $\Td$ we embedded in this step. This is possible by~\eqref{flor} and as the second assumption of case $(b)$ implies that we will not run out of trees to embed unless $ |\uTd^*\cap C'|\ge\frac k2$.

Next, we begin defining the embedding $\varphi$, which starts out as $\varphi^*$ for the trees of~$\Td^*$. We embed $F_{C'}$ tree by tree as follows, while unembedding some  trees of~$\Td^*$. We keep track of the currently embedded trees of $\Tc, \Td$ by denoting them by $\Td^b$, $\Tc^b$. 
Throughout this step, we will maintain the property that for each $W\in N(Z)$, 
\begin{equation}\label{solano}
|\varphi^*(\uTd^*)\cap W|\le |\varphi (\uTd^b)\cap W|+  |\varphi (\uTc^b)\cap W|\le (1-\gamma^{101}) m.
\end{equation} 
Note that~\eqref{solano} holds before we start embedding $F_{C'}$   as then $\varphi^*(\uTd^*)=\varphi(\uTd^b)$.

  Successively for each $X\in\mathcal T_{C'}$, we will find adjacent  clusters $W\in N(Z)$, $ W'\in Z$ such that the at most two images of $N(X)\cap S_{C'}$ are typical to $W$,  such that 
  $d(B',W)m\ge |\varphi (\uTc^b)\cap W|+\gamma^{50}m$,  and such that 
      $|W'\setminus \varphi (\uTc^b\cup\uTd^b)|\ge \gamma^{250}m$. We~first find $W'\in Z\cap N(A)$, which is easy because of~\eqref{seand} and since $|\uT_{C'}\cap C'|+|\uT_{D'}\cap D'|\le\frac k2$ by the second assumption of Case 2.1~$(b)$. Then, as  on the one hand $d(W', N(Z))+ d(B',N(Z))\ge \kappa+\gamma^{33}\kappa$ by~\eqref{degreesofR'onlyinNA}, and on the other hand, 
\begin{equation*}\label{nz}
    \textstyle 
   |N(Z)|\le \frac 1m (\max_{F_C}+\max_{F_D})+\gamma^{34}\kappa \le 
\kappa-\frac 1m 
|F_{C'}\cap D'|
+\gamma^{34}\kappa
\end{equation*}  by~\eqref{IloveRio} and the first assumption of subcase $(b)$,  there is a cluster $W\in N(B')\cap N(W')$ as above, with the additional property that if $|\varphi (\uTd^b)\cap W|+  |\varphi (\uTc^b)\cap W|+|X\cap D'|> (1-\gamma^{101}) m$ then   $|\varphi (\uTd^b)\cap W|\ge 2\beta m$. 
We use Lemma~\ref{lem:T1} to embed $X$, with   $N(S_{C'})\cap X$ embedded correctly. 
 If~\eqref{solano} fails to hold after embedding~$X$, we unembed a set $\mathcal X\subseteq\Td^b$ with $\beta m\le |\bigcup \mathcal X\cap C'|\le 2\beta m$. Then~\eqref{solano} holds. 
 
 We have embedded all of $F_{C'}$. Now, we unembed some more trees of $\Td^b$, until
 \begin{enumerate}
     \item[(IIb)] $|\uTd^b\cap C'|+|\bigcup(\Td\setminus\Td^b)\cap D'|+
     |\uTc\cap D'|
     \le \frac k2 
     $
 \end{enumerate}
 holds, which is possible by~\eqref{CSDS}. 
 Note that if we unembedded at least one tree in the last step, the bound from (IIb) is close to being sharp. On the other hand, if we did not unembed any trees in the last step, then by (Ib) and~\eqref{solano}, 
$\sum_{W\in N(Z)}(|\varphi (\uTd^b)\cap W|+  |\varphi (\uTc^b)\cap W|)\ge d(A, N(Z))m-\gamma^{100}|N(Z)|m$.
So,
  \begin{enumerate}
     \item[(Ib')] either $|\uTd^b\cap C'|+|\bigcup(\Td\setminus\Td^b)\cap D'|+
     |\uTc\cap D'|
     \ge \frac k2-\beta m$ or 
     $|\varphi (\uTd^b\cap C')|+  |\varphi (\uTc^b\cap D')|\ge d(A, N(Z))m-\gamma^{100}|N(Z)|m$.
 \end{enumerate}
 This finishes step 1.
Let $\Td'$ consist of all trees of $\Td$ that were embedded in step 1. Note that in case $(a)$, $\Td'=\Td^a$, and in case $(b)$, $\Td'\subseteq\Td^b$. 

We are ready for the second step of our embedding procedure. We embed as many as possible of the  trees  in $\Td\setminus \Td'$ as follows. If there are  adjacent   $W\in Z\cap N(A)$, $W'\in  N(Z)$ and an unembedded tree  $X\in\Td\setminus \Td'$ such that both $W$, $W'$ have at least $\gamma^{250}m$ unused vertices, and if the images of $N(X)\cap S_{D'}$ have suitable unused neighbours in $W$,   then we embed $X$ into  $G[W, W']$ using Lemma~\ref{lem:T1}, with $X\cap C'$ embedded in~$W$.  Note that 
we can always find a suitable $W'$ once   $W$ is found, because of~\eqref{degreesofR'onlyinNA}, (IIa) and~(IIb).
Hence, at the end of the second step, either all of~$\Td$ is embedded, in which case we are done, or, as we will assume from now on, 
 \begin{equation}
     \label{sea}\text{after step 2, at least $d(A, Z)m-\gamma^{250}k$ vertices of $\bigcup Z$ were used.}
 \end{equation} 
 As  $d(A,Z)\ge |N(Z)|\ge \delta (R')\ge \frac\kappa 2 +\gamma^{33} \kappa$ by~\eqref{seand} and~\eqref{degreesofR'onlyinNA}, observation~\eqref{sea} implies that if $|\uTd'\cap C'|+|\bigcup(\Td\setminus\Td')\cap D'|+
     |\uTc\cap D'|\ge\frac k2-
2\gamma^{100}k$, 
     then we have embedded all of $T$. Hence,  by (Ia) and
     (Ib')
      we used at least $d(A, N(Z))m-\gamma^{100}|N(Z)|m$ vertices of $\bigcup N(Z)$.  
So by~\eqref{mexico}, we know that $d(A, V(M'))m\ge |\bigcup(\Td\setminus \Td')|+\gamma^{100}k$, where  $M'$ consists of the edges of $M$ that avoid $N(Z)$. Using Lemma~\ref{lem:T3} with   $M'$, $\Td\setminus \Td'$, $U=\emptyset$,  $Q=A$ and the usual $N_X$ and $\psi$, we  embed  $\bigcup(\Td\setminus \Td')$ in a  final step. 

\subsubsection{ Case 2.2:  $|F_{C'}\cap D'|< |F_{C'}\cap C'| $ and 
$|F_{D'}\cap C'| < \frac k2$.}

Set $Z^*:= N(A)\cap Z$ and $z^*:= \sum_{W\in Z^*}d(A, W)$. By the condition of the case $|F_{C'}\cap C'|>|C'|-\frac k2$. So since by~\eqref{seand}, $z^*\ge |N(Z)|\ge\frac \kappa2+\gamma^{33}\kappa$  we are able to choose  $\mathcal T_{C'}^{Out}\subseteq \mathcal T_{C'}$ such that 
\begin{equation}
    \label{FinN}
    |C'|- z^*m+\gamma^{41}k
    \le 
   |\bigcup\mathcal T_{C'}^{Out}\cap C'|\le  \max\{0,  |C'|- z^*m+\gamma^{40}k\},
\end{equation} 
and
moreover, by choosing those trees $X$ of $\T_{C'}$ for $\mathcal T_{C'}^{Out}$ that maximise $\frac{|X\cap C'|}{| X\cap D'|}$, and recalling
the first assumption of  Case 2.2,
we ensure that
\begin{equation}
    \label{gilberto}  |\uT_{C'}^{Out}\cap C'|\ge |\uT_{C'}^{Out}\cap D'|.
\end{equation}

Let $B^*\in Z^*$, $\mathcal Q=\{Q_1,\ldots,Q_q\}\subseteq N(Z)$ be given by Claim~\ref{Bstar} with 
$b=\frac 1m|\bigcup\TO\cap D'|+\gamma^{41}\kappa$, which fulfills the condition of the claim 
because of~\eqref{degreesofR'onlyinNA} and since $|D'|\le\frac k2+c_\beta$. 
Then  \begin{equation}\label{FinN1} 
    d_{R''}(B^*, \mathcal Q)m\ge |\bigcup\TO\cap D'|+\gamma^{42}k
\end{equation}
and for each $W\in Z^*$, \vskip-.1cm
\begin{equation}\label{FinN2}
   d_{R''}(W, \mathcal Q)m
   \le 
   |\bigcup\TO\cap D'|+\gamma^{40}k.
\end{equation} 
Moreover,
    for   $1\le j<q$, either \begin{enumerate}
        \item[(B1)] $d_{R'' }(Q_j)\ge (1+\gamma^{35})\kappa $ or 
        \item[(B2)]  $d_{R''-Z^*-(N(Z)\setminus \{Q_1,\ldots, Q_{j-1}\})}(Q_j) \ge 
      (1+\gamma^{36})  \kappa 
      -  z^*   - 
     \frac 1m |\bigcup\TO\cap D'|
     .$
    \end{enumerate}

 We will embed  $T$ in several steps, letting $\varphi$ denote  the embedding. 
Our plan is to embed  $\mathcal T_{C'}\setminus \mathcal T_{C'}^{Out}$ from $B^*$ between~$Z^*$  and $N(Z) \setminus \mathcal Q$ with the roots in $\bigcup N(Z)$, embed~$\mathcal T_{D'}$ from $A$   between  $Z^*$   and~$N(Z)\setminus\Q$  with the roots in  $\bigcup Z^*$, and finally 
 embed 
$\mathcal T_{C'}^{Out}$ from $B^*$ between $\mathcal Q$ and   $V(R'')$  with the roots in~$\bigcup \mathcal Q$. 

In the first step, we use Fact~\ref{fact:typical} to  embed $S_{D'}$ in $A$ and $S_{C'}$ in~$B^*$, respecting adjacencies and  choosing images that are typical to all but at most $\sqrt\eps |Z^*|$, resp.~$\sqrt\eps |N(Z)|$ of the clusters of $Z^*$, resp.~$N(Z)$.
Throughout we denote the embedding by $\varphi$.

In the  second step, we embed  $\mathcal T_{C'}\setminus \mathcal T_{C'}^{Out}$. Successively for each $X\in\mathcal T_{C'}\setminus \mathcal T_{C'}^{Out}$, we will find  adjacent clusters $W\in N(Z)\setminus \mathcal Q$, $ W'\in Z^{*}$ such that the  vertices of $\varphi(N(X)\cap S_{C'})$ are typical to~$W$,  such that 
  $d(B^*,W)m\ge |U\cap W|+\gamma^{50}m$,  and such that 
      $|W'\setminus U|\ge \gamma^{50}m$,
 where~$U$ are the vertices used  so far. We first find $W'$, which is easy because of~\eqref{FinN}. Then, we observe that  on the one hand $d(W', N(Z))+ d(B^*,N(Z))\ge \kappa+\gamma^{33}\kappa$ by~\eqref{degreesofR'onlyinNA}, and on the other hand,
by~\eqref{IloveRio} and by the first assumption of Case~2.2, 
$$ \textstyle  |\bigcup N(Z)|\le \max_{F_C}+\max_{F_D}+\gamma^{34}k \le 
k-
|F_{C'}\cap D'|
+\gamma^{34}k.$$ 
 So  by~\eqref{FinN2}, 
there is a suitable cluster  $W\in (N(B^*)\cap N(W'))\setminus \Q$.
We now use Lemma~\ref{lem:T1} to embed~$X$, with its   neighbour(s) of $ S_{C'}$ embedded correctly.

In the third step, we embed $\mathcal T_{D'}$ tree by tree. For each $X\in \mathcal T_{D'}$, we first use~\eqref{FinN} to find a cluster $W\in Z^*$ with at most $d(A,W)m-\gamma^{50}m$ used vertices  such that the images~of $N(X)\cap S_{D'}$ are typical to $W$.  We then find a cluster $W'\in N(W)\setminus \mathcal Q$ with at least $\gamma^{50}m$ free vertices, which is possible by~\eqref{degreesofR'onlyinNA} and~\eqref{FinN2}, since so far we only embedded  $D'\setminus(\bigcup\TO\cap D')$   in $\bigcup N(Z)$, and since $|D'|\le \frac k2+c_\beta$.   By Lemma~\ref{lem:T1}, we are able to embed $X$ correctly. 
By~\eqref{FinN}, after step 3 we have:
\begin{equation}\label{kad}
    \text{if $\TO\neq\es$ then $|\varphi(T)\cap Z^*|\ge z^*m-\gamma^{39}k$.}
\end{equation}

In the  fourth step, we embed $\TO$. 
We  embed all of $\T_{C'}^{Out}\cap D'$ in $\bigcup\mathcal Q$, while  $\T_{C'}^{Out}\cap C'$ is embedded (almost) anywhere. Our embedding proceeds in two stages. In the first stage we will identify a target cluster for $X\cap D'$ for  each tree $X\in\TO$ and embed  $N(S)\cap X$.   In the second stage we embed the rest of these trees. 
The procedure is somewhat more delicate than usual, as space is very limited, and inside $\bigcup\mathcal Q$ we have to divide the available space between both bipartition classes of~$\TO$.

Let us start the first stage. Our plan is to extend  $\varphi$ to all neighbours of~$S$ in $\bigcup\mathcal{T}_{C'}^{Out}$, and we will take the opportunity to enumerate the  trees in $\mathcal{T}_{C'}^{Out}$ as $X_1, X_2,\ldots, X_{t}$, where $t=|\TO|$. We will do this inductively for
$i=1,\ldots,t$.
For convenience, at step~$i$ call a pair $X\in\TO$, $Q_h\in\Q$  {\it $i$-good}  if  the vertices in $\varphi(N(X)\cap S_{C'})$ are typical to $Q_h$  and 
$$d(B^*,Q_h)m\ge  
     \sum_{\ell<i\land
    \varphi(X_\ell\cap N(S_{C'}))\subseteq Q_h}
    |X_\ell\cap D' |   
    +\gamma^{45}m.$$
 Setting   
 $\T_1:=\{X\in\TO:|X\cap C'|<|X\cap D'|\}$ and $\T_2:=\TO\setminus \T_1$,  we will find  an $h_i\le q$ such that   the following hold for each $i=1,\ldots,t$ (where $h_0=q-1$): 
\begin{enumerate}
    \item[(a)]  $h_i\le h_{i-1}$ 
    \item[(b)] for each $x\in X_i\cap N(S)$, we have $\varphi(x)\in Q_{h_i}$  and $\varphi(x)$ is typical to all but at most $\sqrt\eps |R''|$ of the clusters of $R''$,
    \item[(c)]  there is no $i$-good pair $X,Q_h$ with $X\in \T_1\setminus\{X_1,\ldots, X_{i}\}$ and   $h>h_i$,
     \item[(d)] there is a set $\Q'$ of at most $\sqrt\eps |N(Z)|$
     clusters of $\Q$ such that  any  $i$-good pair $X,Q_h$ with $X\in  \T_2\setminus\{X_1,\ldots, X_{i}\}$ and   $h>h_i$ fulfills $Q_h\in\Q'$,
         \item[(e)] if $X_i\in\T_1$ then for all $j<i$, $X_j\in \T_1$.
\end{enumerate}

Say we are at step $i$ of the procedure. If $\T_1\not\subseteq \{X_1,\ldots, X_{i-1}\}$, let $h_i\le h_{i-1}$ be maximum such that there is an $i$-good pair $X,Q_{h_i}$ with  $X\in \T_1  \setminus \{X_1,\ldots, X_{i-1}\}$. Such a pair exists by~\eqref{FinN1}, by~$(c)$   and by the typicality~of $\varphi(S_{C'})$ to almost all clusters of $N(Z)$. If $\T_1\subseteq \{X_1,\ldots, X_{i-1}\}$, then let $h_i\le h_{i-1}$ be maximum such that there is an $i$-good pair~$X,Q_{h_i}$ with  $X\in \T_2\setminus \{X_1,\ldots, X_{i-1}\}$. 
Such an index $h_i$ exists by~\eqref{FinN1},~$(d)$, and by the typicality~of~$\varphi(S_{C'})$.
We set $X_i:=X$. 
Our choice of $X$ and $h_i$ ensures that $(a)$ and $(e)$ hold, and we can  extend~$\varphi$ to $N(S)\cap X$ so that also  $(b)$ holds for $i$. Items~$(c)$ and $(d)$ hold for $i$ because of our choice of $Q_{h_i}$, because of~$(c)$ and $(d)$ for $i-1$, and, in the case that $\T_1=\{X_1,\ldots, X_{i}
\}$,  because at most $\sqrt\eps |N(Z)|$ clusters of $\Q$ did not form a good pair with  $X_i$. 

We proceed to the second stage where we embed the remainder of the trees  $X_1, X_2,$ $\ldots, X_{t}$ in this order. 
At each step $i$ we will~ensure~that 
\begin{enumerate}
    \item[{\small $(\clubsuit)$}] $(1-\gamma^{45})d(B^*, Q_h)m+|Q_h\cap\varphi(C'\cap \bigcup_{j\le i}X_j )|\le (1-\gamma^{50})m$ for all $h\le h_i$.
\end{enumerate}
This holds when $i=0$. Now 
say we are at step $i\ge 1$. If there is a $W\in N(Q_{h_i})\setminus \{Q_1,\ldots,Q_{h_i-1}\}$ such that each vertex of $\varphi(N(S)\cap X_i)$ sees  at least $\gamma^{50}m$ unused vertices of $W$, then we employ Lemma~\ref{lem:T1} to  embed~$X_i$ in $Q_{h_i}\cup W$ (where we use the fact that by {\small $(\clubsuit)$}, there are enough unused vertices in $Q_{h_i}$). So assume there is no such~$W$. Then by $(b)$, we know that $$d(Q_{h_i},\bigcup (V(R'')\setminus \{Q_1,\ldots, Q_{h_i-1}\}))m\le 
|\varphi(T)\setminus\bigcup_{\ell<h_i}  Q_\ell|
+\gamma^{50}n.$$
\vskip-.2cm
\noindent Thus, if
 (B1) holds for $Q_{h_i}$ then 
$$  d(Q_{h_i}, \bigcup_{\ell<h_i}  Q_\ell   )m
     \ge |X_i\cup X_{i+1}\cup\ldots X_t|+|\varphi( \bigcup_{j=1}^{i-1}X_j)\cap \bigcup_{\ell<h_i}  Q_\ell|+\gamma^{36}k,
$$
and if (B2) holds, then   by~\eqref{kad}, 
\begin{align*}
    d(Q_{h_i}, \bigcup_{\ell<h_i}  Q_\ell   )m
     &
     \ge |C'\cap (X_i\cup X_{i+1}\cup\ldots X_t)|+|\varphi( \bigcup_{j=1}^{i-1}X_j)\cap \bigcup_{\ell<h_i}  Q_\ell|+\gamma^{37}k\\
      & \ge
      |D'\cap (X_i\cup X_{i+1}\cup\ldots X_t)|+|\varphi( \bigcup_{j=1}^{i-1}X_j)\cap \bigcup_{\ell<h_i}  Q_\ell|+\gamma^{37}k,
\end{align*}  
where for the second inequality we use~\eqref{gilberto},~$(e)$, and the fact that
 $|C'\cap X|\ge |D'\cap X|$ for each  $X\in\T_2$. 
Thus there is an index $h\in\{1,\ldots,h_i-1\}$ such that  the  strengthening of {\small $(\clubsuit)$} obtained by replacing $\gamma^{50}$ by $2\gamma^{50}$ holds and 
 each of (the at most two) $v\in \varphi(N(S)\cap X_i)$ is adjacent to a set $W_1^v$ of at least $\gamma^{40}m$ unused vertices of $Q_{h}$. Let $W_1$ be the union of the sets $W_1^v$ over all $v\in \varphi(N(S)\cap X_i)$. 
Now, since~$X_i$,~$Q_{h_i}$ form an $i$-good pair, and by {\small $(\clubsuit)$},  $Q_{h_i}$ contains a set $W_2$ of at least $\gamma^{51}m$ unused vertices. By Fact~\ref{fact:1}~(ii),  the pair 
$(W_1, W_2)$ is  $\frac{2\eps}{\gamma^{51}}$-regular and has density at least $\eps^{1/99}$.  
To embed the remainder of~$X_i$, we apply Lemma~\ref{lem:T1} to the pair
$(Q_{h_i},Q_h)$, with sets $W_2$ and $W_1$
 (if $N(S)\cap X_i=\{u,u'\}$, we replace $W_2$ by
$W_2\cup\{\varphi(u')\}$ and use $u'$ as the second
prescribed vertex).
Our choice of~$h$ ensures {\small $(\clubsuit)$} continues to hold. 
This finishes the second stage and thus~Case~2.2.

\newcommand{\etalchar}[1]{$^{#1}$}


\begin{thebibliography}{99}


\bibitem{bps}
G.~Besomi, M.~Pavez-Sign\'{e}, M.~Stein.
\newblock Degree Conditions for Embedding Trees.
\newblock SIAM Journal on Discrete Mathematics, 33(3):1521-1555,
2019.


\bibitem{BPS3}
G.~Besomi, M.~Pavez-Sign\'e, M.~Stein.
\newblock On the {E}rd{\H{o}}s-{S}\'os conjecture for bounded degree trees.
\newblock {\em Combinatorics, Probability and Computing}, 30(5):741--761, 2021.


\bibitem{BurrRoberts}
S.~A. Burr and J.~A. Roberts.
\newblock On Ramsey numbers for stars.
\newblock {\em Utilitas Math.}, 4, 1973, 217--220.

\bibitem{beyond}
A. Davoodi, D. Piguet, H. {\v R}ada, N. Sanhueza-Matamala.
\newblock  The asymptotic version of the Erd{\H o}s-S{\'o}s conjecture and beyond
\newblock  Preprint arXiv 2026, arXiv:2603.17755


\bibitem{Erdos64}
P.~Erd\H{o}s.
\newblock Extremal problems in graph theory.
\newblock In {\em Theory of graphs and its applications, Proc. Sympos.
  Smolenice}, pages 29--36, 1964.


\bibitem{Erdos81}
P.~Erd\H{o}s.
\newblock Some new  problems and results  in graph theory and other branches of combinatorial mathematics.
\newblock  {\em Lecture Notes in Mathematics}, 865:9-17,1981.

\bibitem{eg75}
{ P. Erd\H{o}s and R.L. Graham.}
 \newblock On partition theorems for finite graphs.
 \newblock {\em Coll. Math. Soc. J\'anos Bolyai}, 10, 1975, 515-527.

\bibitem{HRSW20}
F.~Havet,  B.~Reed, M.~Stein, and D.~Wood.
\newblock A variant of the Erd{\"o}s-S{\'o}s Conjecture
\newblock{\em Journal of Graph Theory}, 94(1):131-158, 2020. 

\bibitem{LKS4}
J.~Hladk\'y, J.~Koml\'os, D.~Piguet, M.~Simonovits, M.~J.~Stein, and E.~Szemer\'edi,
\emph{The approximate Loebl--Koml\'os--S\'os Conjecture IV: Embedding techniques and the proof of the main result},
arXiv preprint arXiv:1408.3870 (2014).

\bibitem{LovaszPlummer}
L. Lov\'{a}sz and M.D. Plummer, \textit{Matching Theory}, Annals of Discrete Mathematics, Vol. 29, North-Holland, Amsterdam, 1986.

\bibitem{regu}
Koml{\'{o}}s, J., Shokoufandeh, A., Simonovits, M., and Szemer{\'{e}}di,
  E.
\newblock The regularity lemma and its applications in graph theory.
\newblock In {\em Theoretical Aspects of Computer Science.} 
(2000), pp.~84--112.

\bibitem{Pok24}
A.~Pokrovskiy.
\newblock {Hyperstability in the Erd{\H o}s-S{\'o}s Conjecture}.
\newblock Preprint 2024, arXiv:2409.15191.


\bibitem{RS23}
B.~Reed and M.~Stein.
\newblock Spanning trees in graphs of high degree with a universal vertex.
\newblock {\em Journal of Graph Theory}, 102(4), 797-821. 


\bibitem{RSExt}
B.~Reed and M.~Stein.
\newblock Extremal Cases of the Erd\H os-S\'os Conjecture.
\newblock {\em Preprint arXiv 2026}


\bibitem{RS25}
B.~Reed and M.~Stein.
\newblock Embedding Nearly Spanning Trees.
\newblock {\em Combinatorics, Probability and Computing},  34(6):927-931, 2025.


%

%

\bibitem{maya-survey}
M.~Stein.
\newblock Tree containment and degree conditions, in: {\it Discrete Mathematics
  and Applications}.
\newblock pages 459--486. Springer International, 2020.

\bibitem{Sze78}
E. Szemer{\'e}di.
\newblock Regular partitions of graphs.
\newblock In {\em Probl\`emes combinatoires et th\'eorie des graphes}
vol.~260 of {\em Colloq.
  Internat. CNRS}. CNRS, Paris, 1978, pp.~399--401.

\bibitem{Tut}
{ W.T. Tutte.}
 \newblock The factorization of linear graphs.
 \newblock {\em J. London Math. Soc.}, 22, 1947, 107-111.


\end{thebibliography}
\end{document}